\newtheorem{theorem}{Theorem}[section]
\newtheorem{proposition}[theorem]{Proposition}
\newtheorem{lemma}[theorem]{Lemma}
\newtheorem{corollary}[theorem]{Corollary}
\theoremstyle{remark}
\newtheorem{remark}[theorem]{Remark}
\newtheorem{definition}{Definition}
\newtheorem*{notation}{Notation}
\numberwithin{equation}{section}
\newcommand{\vep}{\varepsilon}
\newcommand{\R}{{\mathbb{R}}}
\newcommand{\Q}{{\mathbb{Q}}}
\newcommand{\C}{{\mathbb{C}}}
\newcommand{\Z}{{\mathbb{Z}}}
\newcommand{\N}{{\mathbb{N}}}
\newcommand{\xbm}{(X,\mathcal{B},\mu)}
\newcommand{\Int}{\operatorname{Int}}
\newcommand{\supp}{\operatorname{supp}}
\newcommand{\hol}{\operatorname{hol}}
\let\oldmarginpar\marginpar
\renewcommand\marginpar[1]{\-\oldmarginpar[\raggedleft\footnotesize #1]%
{\raggedright\footnotesize #1}}
\begin{document}
\title
{Recurrence and non-ergodicity in generalized wind-tree models}
\author[K. Fr\k{a}czek \and P. Hubert]{Krzysztof Fr\k{a}czek \and Pascal Hubert}

\address{Faculty of Mathematics and Computer Science, Nicolaus
Copernicus University, ul. Chopina 12/18, 87-100 Toru\'n, Poland}
\email{fraczek@mat.umk.pl}
\address{Institut de Math\'ematique de Marseille \\ Université d$ '$Aix-Marseille\\
Centre de Mathématiques et Informatique (CMI) \\
39, rue F. Joliot Curie \\
F-13453 Marseille Cedex 13, France} \email{pascal.hubert@univ-amu.fr}
\date{\today}

\subjclass[2000]{ 37A40, 37C40}  \keywords{}
\thanks{Research partially supported by the National Science Centre (Poland) grant 2014/13/B/ST1/03153 and by the projet ANR blanc GeoDyM (France)}
\maketitle

\begin{abstract}
In this paper, we consider generalized wind-tree models and $\Z^d$-covers over compact translation surfaces. Under suitable hypothesis, we prove recurrence of the linear flow in a generic direction and non-ergodicity of Lebesgue measure.
\end{abstract}

\section{Introduction}

The wind-tree model is a billiard in the plane introduced by P.~Ehrenfest and T.~Ehrenfest in 1912 (\cite{EhEh}). Its periodic version is defined as follows. A point moves in the plane $\R^2$ and bounces elastically off rectangular scatterers following the usual law of reflection. The scatterers are translates of the rectangle $[0,a]\times[0,b]$ where $0<a<1$ and $0<b<1$, one centered at each point of $\Z^2$. The complement of obstacles in the plane is the wind-tree model, it is an infinite billiard table.

In this work, we study periodic polygonal billiards which are natural generalizations of the classical wind-tree model. Recently, results describing the
dynamics of the wind-tree model were obtained (\cite{AH}, \cite{Co-Gu}, \cite{Del}, \cite{DHL}, \cite{Del-Zo}, \cite{Fr-Ulc:nonerg}, \cite{Hu-Le-Tr}) based
on deep ideas in Teichm\"uller dynamics. For instance, the Kontsevich-Zorich cocycle and the Oseledets genericity result by Chaika and Eskin (\cite{Es-Ch})
are key tools in these studies.

In geometric terms, the billiard flow induces a linear flow on a doubly periodic surface $X_\infty$ which is a $\Z^2$-cover of a compact translation
surface $X$. It is known that, for every parameters $(a,b)$ and almost every direction, the linear flow is recurrent (\cite{AH}) and non ergodic
(\cite{Fr-Ulc:nonerg}) on $X_\infty$. Symmetries of the obstacles imply that $X$ is a $(\Z/2\Z)^2$-cover of a genus 2 surface. Most results rely on this
remark: the Hodge bundle splits into several invariant parts of small dimension. Moreover $GL(2,\R)$ orbit closures are classified in genus 2 by the work
of McMullen (\cite{McM}).

The present work is an attempt to tackle these questions for models with fewer symmetries  and for general $\Z^d$-covers of compact translation surfaces.
For more on the dynamics of the linear flow on non-compact translation surfaces, see the following list of references (\cite{Fr-Ul:erg_bil}, \cite{Ho1},
\cite{Ho2}, \cite{Ho3}, \cite{Ho-We}, \cite{Hu-We1}, \cite{Hu-We2}). Here, we state results about recurrence and non-ergodicity. A $\Z^d$-cover
of a translation surface $(X, \omega)$ is defined by $d$ independent cycles $\gamma=([\gamma_1], \ldots, [\gamma_d])$ in the absolute homology. The resulting translation
surface we denote by $(\widetilde{M}_\gamma,\widetilde{\omega}_\gamma)$. A necessary
condition for recurrence for $(\widetilde{X}_\gamma,\widetilde{\omega}_\gamma)$ is that
\[\int\limits_{\substack{ \gamma_1 }}\omega = 0, \ldots, \int\limits_{\substack{ \gamma_d }}\omega =0,\]
which means that $([\gamma_1], \ldots, [\gamma_d])$ belong to $\ker(\hol)$, the kernel of holonomy.

For the simplest class of square-tiled translation surfaces we give an effective sufficient condition for its holonomy free $\Z^d$-covers to be non-ergodic.
\begin{theorem} \label{thm:nonergo-easy}
Let $(X,\omega)$ be a compact square-tiled translation surface of genus $g>1$ such that the Kontsevich-Zorich cocycle over $SL(2,\R)\cdot \omega$ has $1<p\leq g$ positive Lyapunov exponents.
Let $d\geq2g-1-p$. Then for every $\gamma\in (\ker(\hol)\cap H_1(X,\Z))^d$ and a.e.\ direction $\theta\in S^1$ the directional flow on the $\Z^d$-cover
$(\widetilde{X}_\gamma,\widetilde{\omega}_\gamma)$ is not ergodic.
\end{theorem}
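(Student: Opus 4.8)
The plan is to realize the directional flow on $(\widetilde{X}_\gamma,\widetilde{\omega}_\gamma)$ as a $\Z^d$-extension (skew product) of the directional flow $\varphi^\theta$ on the compact surface $(X,\omega)$, with displacement cocycle $\beta^\theta_T(x)=(\beta^{\theta,1}_T(x),\dots,\beta^{\theta,d}_T(x))\in\R^d$ whose $i$-th coordinate is the ergodic integral $\int_0^T\eta_i\bigl(\varphi^\theta_s x\bigr)\,ds$ of the closed $1$-form $\eta_i$ Poincaré-dual to $\gamma_i$. Since $\gamma_i\in\ker(\hol)$, each $\eta_i$ has zero mean, so $\beta^\theta_T$ grows sublinearly; this is precisely the drift-free regime compatible with recurrence. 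The first step is the standard reduction: by the theory of essential values for group extensions (Schmidt, Conze), the extension is \emph{non}-ergodic as soon as the group of essential values of $\beta^\theta$ is a proper subgroup of $\R^d$, and for this it suffices to exhibit a single nonzero functional $\xi\in(\R^d)^*$ along which the cocycle $\xi\cdot\beta^\theta_T$ is tight (bounded in probability along a suitable sequence of times).

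Next I would bring in the renormalization. By the Chaika--Eskin Oseledets-genericity theorem \cite{Es-Ch}, for a.e.\ $\theta\in S^1$ the Teichm\"uller geodesic $g_t r_\theta\omega$ is forward-generic for the Kontsevich--Zorich cocycle and $\varphi^\theta$ is uniquely ergodic, so the Zorich--Forni deviation spectrum applies to $\beta^\theta_T$. The tautological plane $\lan[\operatorname{Re}\omega],[\operatorname{Im}\omega]\ran$ (carrying the extremal exponents $\pm1$) is symplectically orthogonal to every $\gamma_i$, hence does not contribute, and the leading deviation of $\beta^\theta_T$ is governed by the Oseledets decomposition of the classes $[\eta_i]$ inside $\ker(\hol)$. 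The hypothesis $1<p$ is used exactly here: it guarantees a second positive exponent $\lambda_2>0$, so that the detectable deviations are a genuinely self-similar process along the Teichm\"uller return times to a compact set, as in \cite{Fr-Ulc:nonerg}.

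The decisive step is a linear-algebra count inside $H^1(X,\R)$ restricted to $\ker(\hol)$, which has dimension $2g-2$ and is invariant under the cocycle. There the positive Lyapunov space $E^{>0}_\theta\cap\ker(\hol)$ has dimension $p-1$, while the weak stable (non-positive) space $E^{\le 0}_\theta\cap\ker(\hol)$ has dimension $2g-1-p$ --- the number appearing in the hypothesis. Because $d\ge 2g-1-p$, the images of $[\eta_1],\dots,[\eta_d]$ under projection onto the detectable-deviation directions cannot be independent, so for a.e.\ $\theta$ there is a nonzero $\xi$ with $\sum_i\xi_i[\eta_i]\in E^{\le 0}_\theta$; along this direction the ergodic integral $\xi\cdot\beta^\theta_T$ is bounded, respectively sub-polynomial with a convergent renormalization. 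In the borderline cases where $\sum_i\xi_i[\eta_i]$ lands only in the zero-exponent part (or where, for $p=g$, the combination still grows), I would fall back on the self-similar tightness mechanism of \cite{Fr-Ulc:nonerg} along the renormalization times to still produce tightness in the direction $\xi$. Tightness of $\xi\cdot\beta^\theta$ then shows $\xi$ is not an essential value, the essential value group is proper, and non-ergodicity follows.

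The main obstacle is the passage from the qualitative deviation spectrum to a genuine \emph{tightness} statement for the renormalized cocycle, valid simultaneously for a.e.\ direction. Oseledets-genericity controls only the exponential rates, whereas the essential-values criterion requires that the $\xi$-component of $\beta^\theta_T$ stay bounded in probability along a concrete sequence of rigidity times adapted to the excursions of $g_t r_\theta\omega$ into a fixed compact subset of the stratum. Making these excursions and the associated Rokhlin-tower structure quantitatively compatible with the Hodge-norm estimates for the contracting directions --- so that the tight direction $\xi$ produced by the dimension count is the \emph{same} for a positive-measure, and ultimately full-measure, set of $\theta$ --- is where the real work lies; everything else is a matter of assembling the deviation estimates and the essential-values formalism.
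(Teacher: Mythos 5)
Your high-level frame --- the special representation of the flow on the cover as a $\Z^d$-valued skew product, Schmidt's essential-value formalism, Chaika--Eskin genericity, and the dimension count identifying $2g-1-p$ as $\dim(\ker(\hol))-(p-1)$ --- coincides with the paper's, which deduces the statement from Theorem~\ref{non-ergodicitycriterion} applied to $K=\ker(\hol)\cap H_1(X,\Q)$ with $d_+=p-1$, $d_-=2g-1-p$. But your decisive step has a genuine gap, in two places. First, your dimension count only yields a nonzero functional $\xi$ with $\sum_i\xi_i\gamma_i$ in the \emph{weak} stable space $E^{\le 0}$, and you then assert tightness of the ergodic integrals in that direction. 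Boundedness of $\lan\sigma^{\theta}_t(x),\cdot\ran$ is available only on the strictly stable space $E^-$ (Theorem~\ref{cohthm}), whose proof needs the summable exponential decay $\|\gamma\|_{g_{t_k}r_{\pi/2-\theta}\omega}\leq C_1e^{-\kappa t_k}$ along returns to a compact set; for classes with zero exponent the series \eqref{eq:sumsum} has no reason to converge, and neither boundedness nor tightness holds in general. Since $\dim E^-=p-1$ is strictly smaller than $2g-1-p$ when $p<g$, you cannot upgrade your $\xi$ so that $\sum_i\xi_i\gamma_i\in E^-$ by counting dimensions: the rational span of the $\gamma_i$'s and $E^-$ have complementary dimensions in $V$, and the paper in fact \emph{proves} $E^-\cap K=\{0\}$. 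Second, even granting tightness, tightness ``along a suitable sequence of times'' does not constrain essential values: the defining condition quantifies over all $n\in\Z$, and Schmidt's coboundary criterion requires tightness of the whole family $\{\psi^{(n)}\}_{n\in\Z}$. (Also, $\xi$ is a functional, not a candidate essential value; what you need is $E_{\Z^d}(\psi_\gamma)\subset\ker\xi$.) Your fallback on ``the self-similar tightness mechanism of \cite{Fr-Ulc:nonerg}'' for the borderline cases --- including $p=g$, $d=g-1$, where your projection argument gives nothing at all --- is precisely the part of that paper that relies on the special symmetries of the Ehrenfest model and that the present paper had to replace.

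The replacement is algebraic, and it is the idea your proposal is missing. Because $E^-\cap H_1(M,\Z)=\{0\}$ (the irrationality half of Theorem~\ref{cohthm}), Lemma~\ref{alglem} applied with $F=E^-$ produces reals $\alpha_1,\ldots,\alpha_{d_-}$, not all rational, and $c\in E^-$ with $\sum_j\alpha_j\gamma_j+c\in H_1(M,\Z)$: one does not ask the combination $\sum_j\alpha_j\gamma_j$ itself to be stable, only integral \emph{modulo} $E^-$. Then $\psi_c$ is an honest coboundary (Theorem~\ref{cohcor}, which again requires $c\in E^-$, not merely $E^{\le 0}$), the cohomologous cocycle $\psi^*=\psi_\gamma+(\psi_c,0,\ldots,0)$ satisfies $\sum_j\alpha_j\psi^*_j(x)=\lan\sum_j\alpha_j\gamma_j+c,\xi_\alpha\ran\in\Z$ (here $\xi_\alpha$ are the homology classes of the IET loops), and Lemma~\ref{lem:skewpnonerg} confines the essential values to the proper subgroup $\Z^{d_-}\cap\{x\in\R^{d_-}:\sum_j\alpha_jx_j\in\Z\}$. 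This yields non-ergodicity with no deviation estimates, no tightness, and no separate borderline treatment; the hypothesis $p>1$ enters only to make $E^-$ nontrivial on $\ker(\hol)$ so that Lemma~\ref{alglem} has a nonzero $F$ to work with, not to produce a self-similar deviation process as in your sketch.
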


\begin{remark}
Forni in \cite{For-criterion} gives a concrete criterion for the positivity of Lyapunov exponents.
Eskin, Kontsevich and Zorich developed in \cite{EKZ1} an explicit formula
for all individual Lyapunov exponents for surfaces called square-tiled cyclic covers.
This gives tools for computing the number of positive exponents and then
for proving non-ergodicity for $\Z^d$-covers of such surfaces.
\end{remark}

Theorem \ref{thm:nonergo-easy} is a corollary of the more technical Theorem \ref{non-ergodicitycriterion} which gives a general criterion for non
ergodicity. This result, inspired by \cite{Fr-Ulc:nonerg}, relates the number of positive Lyapunov exponents of a compact translation surface to the non-ergodicity
of its $\Z^d$-covers. Theorem \ref{non-ergodicitycriterion} applies to generalized wind-tree models (see Theorem
\ref{thm:nonP} and Corollary \ref{cor:fin}).

Let $\Lambda$ be an arbitrary lattice in $\R^2$. Put a rectangular obstacle of size $(a,b)$ at each point in $\Lambda$ (see Figure
\ref{fig:windtree}). We stress that the sides of the obstacles are horizontal and vertical and not assumed to be parallel to any vector in $\Lambda$. We also
assume that the obstacles are pairwise disjoint. This defines an infinite billiard table $E(\Lambda,a,b)$ which is one of the most natural generalization of the wind-tree
model.
\begin{figure}[h]
\includegraphics[width=0.5\textwidth]{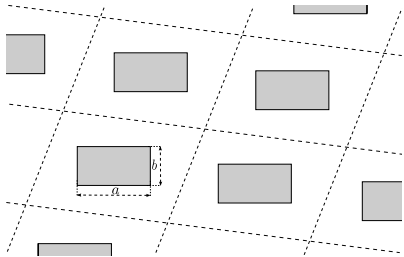}
\caption{Billiard table $E(\Lambda,a,b)$}\label{fig:windtree}
\end{figure}
As an application of Theorem \ref{non-ergodicitycriterion} (included in Corollary \ref{cor:fin} where more general models are considered)
we have that the directional billiard flow on $E(\Lambda,a,b)$ is not ergodic for almost every direction. This extends the main result of \cite{Fr-Ulc:nonerg} in which
 the case $\Lambda=\Z^2$ and almost all parameters $(a,b)$ is taken up.  We deal also with the problem of recurrence
on  $E(\Lambda,a,b)$ which was recently solved for $\Lambda=\Z^2$ in \cite{AH}.
As a partial solution we prove that following result:
\begin{theorem}\label{thm:recehr0}
For any real $\lambda$, let $\Lambda_\lambda = (1,\lambda)\Z + (0,1) \Z$.
For all $0<a,b<1$ and for every lattice $\Lambda_\lambda$, $\lambda\in\R$ the directional flow on $E(\Lambda_\lambda,a,b)$ is recurrent for a.e.\
direction.
\end{theorem}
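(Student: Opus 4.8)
The plan is to realize the billiard on $E(\Lambda_\lambda,a,b)$ as a linear flow on a $\Z^2$-cover of a compact translation surface and then to apply the recurrence criterion of Avila and Hubert \cite{AH}. First I would unfold the billiard by the Katok--Zemlyakov construction: since the reflections in the horizontal and vertical sides of the rectangular obstacle generate the Klein four-group $(\Z/2\Z)^2$, the billiard flow on $E(\Lambda_\lambda,a,b)$ is measurably conjugate to the directional flow on a translation surface $\widetilde X_\lambda$ built from four copies of the flat torus $\R^2/\Lambda_\lambda$ with the rectangle removed, glued along the slits according to the two reflections. The lattice $\Lambda_\lambda$ acts on $\widetilde X_\lambda$ by deck translations, and the quotient $X_\lambda=\widetilde X_\lambda/\Lambda_\lambda$ is a compact translation surface; thus $(\widetilde X_\lambda,\widetilde\omega)$ is exactly a $\Z^2$-cover $(\widetilde X_{\gamma},\widetilde\omega_\gamma)$ of $X_\lambda$, the cover being determined by two cycles $\gamma=([\gamma_1],[\gamma_2])$ dual to the two generators $(1,\lambda)$ and $(0,1)$ of $\Lambda_\lambda$.

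Next I would check the necessary holonomy condition $[\gamma_1],[\gamma_2]\in\ker(\hol)$ highlighted in the introduction. This I expect to follow from the central symmetry $z\mapsto -z$ of $\R^2$, which preserves every lattice $\Lambda_\lambda$ and preserves the centrally symmetric rectangle, hence descends to the torus and lifts to an automorphism $\tau$ of $X_\lambda$ with $\tau^*\omega=-\omega$. Because $\tau$ intertwines the deck action of $v\in\Lambda_\lambda$ with that of $-v$, it sends each defining cycle $\gamma_i$ to $-\gamma_i$ up to homology, and comparing $\int_{\gamma_i}\omega$ with $\int_{\tau\gamma_i}\tau^*\omega$ forces $\int_{\gamma_i}\omega=0$. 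So the cover is holonomy-free and the necessary condition recorded in the introduction is met; recurrence of the billiard flow in direction $\theta$ is then equivalent to recurrence of the directional flow on $(\widetilde X_\gamma,\widetilde\omega_\gamma)$, equivalently to recurrence of the associated $\Z^2$-valued displacement cocycle over $X_\lambda$.

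Finally I would invoke the recurrence criterion for $\Z^2$-covers from \cite{AH}: for a.e.\ direction $\theta$ the Teichm\"uller geodesic $g_t\,r_\theta X_\lambda$ returns infinitely often to a compact subset of the moduli space (this holds for every translation surface by Masur's criterion), and together with $d=2$ and the holonomy-free condition this yields recurrence of the directional flow on the cover, hence of the billiard flow, for a.e.\ $\theta$. The displacement cocycle has vanishing linear drift precisely because $\int_{\gamma_i}\omega=0$ makes the Schwartzman asymptotic cycle pair trivially with the $\gamma_i$, so that the fluctuations are sub-ballistic and, in the critical dimension $d=2$, neighborhood-recurrent. The main obstacle, and the reason the statement is restricted to the one-parameter family $\Lambda_\lambda$ rather than an arbitrary lattice, is verifying the dynamical hypotheses of the criterion uniformly in $\lambda$ and in $(a,b)$: one must ensure that $X_\lambda$ stays in a locus where the $\Z^2$-part of the Kontsevich--Zorich cocycle is non-degenerate enough for the Avila--Hubert argument to apply. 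Here I would exploit the special alignment of the lattice vector $(0,1)$ with the vertical sides of the obstacle, which keeps the vertical direction rational for $X_\lambda$, together with the $(\Z/2\Z)^2$-symmetry that splits the Hodge bundle into invariant pieces, to control the relevant cocycle for all parameters; carrying this out for a general lattice, where no such alignment is available, is what remains open.
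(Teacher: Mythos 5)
Your setup (unfolding through the Klein four-group, realizing the billiard as the directional flow on a $\Z^2$-cover of a compact surface, and using the central symmetry $\tau$ to verify that the defining cycles are holonomy-free) matches the paper's framework in Sections~\ref{sec:genwt}~and~\ref{sect:rec-windtree}. The gap is in your final step, where you assert that the Avila--Hubert criterion reduces to ``Teichm\"uller orbit returns to a compact set $+$ zero holonomy $+$ $d=2$ implies recurrence.'' That is not the criterion, and the heuristic behind it is false: for deterministic cocycles over an ergodic transformation there is no Chung--Fuchs theorem, and zero drift does \emph{not} imply recurrence of a $\Z^2$-valued cocycle --- transient zero-mean examples exist, which is exactly why wind-tree recurrence is a theorem rather than a soft consequence of Masur's criterion. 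What \cite{AH} (quoted as Proposition~\ref{prop:rec}) actually requires is that the positive $(g_t)$-orbit of the surface accumulate on a surface carrying a \emph{vertical cylinder whose core class lies in $K^\perp$}, i.e.\ is $\tau$-invariant; such a cylinder lifts to genuine cylinders (not infinite strips) in the cover, producing a positive-measure set of recurrent points, and ergodicity on the compact surface then spreads recurrence. Your proposal never produces such a cylinder, and your closing paragraph about ``non-degeneracy of the Kontsevich--Zorich cocycle'' is not the relevant hypothesis --- no Lyapunov-exponent input enters the recurrence argument at all.

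Producing that symmetric cylinder for \emph{every} triple $(\lambda,a,b)$ is the entire content of the paper's proof of Theorem~\ref{thm:recehr}. The paper constructs, by an explicit billiard computation, a segment $S$ of a billiard orbit in $E(\Lambda_\lambda,a,b)$ joining a regular point $\bar x$ of $M(\Lambda_\lambda,a,b)$ to $\tau(\bar x)$: the trajectory with slope $s=\bigl(n+b/2+k(1-b)\bigr)/(1-a/2)$ threads between two scatterers in a column, reflects $k$ times alternately off their facing horizontal sides, and lands at the midpoint of a scatterer side; then $S$ closed up by $\tau(S)$ is a $\tau$-invariant periodic orbit bounding a cylinder $C$ with $\tau_*\sigma(C)=\sigma(C)$, so $\sigma(C)\in K^\perp$. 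The restriction to the family $\Lambda_\lambda$ enters through an arithmetic covering argument, not through any alignment of the Hodge splitting: the admissible starting heights form open intervals of length greater than $1/2-b$ centered at the points of $\Z\cup(\Z+\tfrac12)$, so consecutive gaps are shorter than $b$ and the set must meet $[-b/2,b/2]-\lambda$ for every $\lambda$. Finally, Birkhoff genericity of a.e.\ direction (Theorem~\ref{thm:esch}) makes $g_{t}(r_{\pi/2-\theta}\omega)$ accumulate on the rotated surface in which $C$ is vertical, so Corollary~\ref{cor:recgen} applies. Without constructing this cylinder, your argument does not establish the theorem for any parameter values.
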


Following the arguments in \cite{AH}, Theorem \ref{thm:recehr0} is obviously true for almost all triples $(\Lambda,a,b)$. We hope it is true for all parameters. To get an everywhere statement, a
classification of orbit closures in the small complexity would be an important step.

 \subsection{Organization of the paper}

Section~\ref{sect:covers} reviews background on translation surfaces and $\Z^d$-covers. Section~\ref{sect:cocycles} presents abstract results on skew
products and essential values. Section~\ref{Teich:sec} is a presentation of Teichm\"uller flow, Kontsevich-Zorich cocycle and Lyapunov exponents.
Section~\ref{sec:nonerggen} is devoted to the proof of Theorem~\ref{non-ergodicitycriterion} (non-ergodicity for some $\Z^d$-covers). Section~\ref{sec:recgen}
follows \cite{AH} and gives criteria for recurrence of linear flows in $\Z^d$-covers. Section~\ref{sect:square-tiled} applies the results of
Sections~\ref{sec:nonerggen}~and~\ref{sec:recgen} to square tiled surfaces. Sections~\ref{sec:genwt}~and~\ref{sect:rec-windtree} are devoted to applications to concrete examples.
Section~\ref{sec:genwt} gives non ergodicity results  for generalized wind-tree models based on  Section~\ref{sec:nonerggen}.
Section~\ref{sect:rec-windtree} applies the technics described in Section~\ref{sec:recgen} to prove recurrence in generalized wind-tree models.
Appendix~\ref{app} delivers the proof of two technical Theorems~\ref{cohthm}~and~\ref{cohcor}.

\section{$\Z^d$-covers of compact translation and half-translation surfaces} \label{sect:covers}
Let $X$ be a  connected oriented surface and let $q$ be a
quadratic meromorphic differential whose poles are of order at most one. The
pair $(X,q)$ is called a \emph{half-translation surface}.

If $q=\omega^2$ with $\omega$ an Abelian differential (holomorphic $1$-form) on $X$ then the pair $(X,\omega)$ is called a \emph{translation surface}.
Then we usually use the letter $M$ (instead of $X$) to denote the surface. Denote by $\Sigma\subset M$ the set of zeros of $\omega$. For every $\theta\in S^1 = \R/2\pi \Z $
denote by $X_\theta=X^{\omega}_\theta$ the directional vector
field in direction $\theta$ on $M\setminus\Sigma$.
Then the corresponding directional flow $(\varphi^{\theta}_t)_{t\in\R}=(\varphi^{\omega,\theta}_t)_{t\in\R}$ (also known as \emph{translation flow}) on $M\setminus\Sigma$
preserves the area measure $\mu_\omega$.

We will use the notation $(\varphi^{v}_t)_{t\in\R}$ and $X_v$ for
the \emph{vertical flow and vector field} (corresponding to
$\theta = \frac{\pi}{2}$) and $(\varphi^{h}_t)_{t\in\R}$ and $X_h$
for the \emph{horizontal flow and vector field} respectively
($\theta = 0$).

Let $(M,\omega)$ be a compact connected translation surface. A {\em $\Z^d$-cover} of $M$ is a surface $\widetilde{M}$ with a free totally discontinuous action
of the group $\Z^d$ such that the quotient manifold $\widetilde{M}/\Z^d$ is homeomorphic to $M$. Then the projection $p:\widetilde{M}\to M$  is called a
{\em covering map}. Denote by $\widetilde{\omega}$ the pullback of the form $\omega$ by the map $p$. Then $(\widetilde{M},\widetilde{\omega})$ is a translation
surface as well. All $\Z^d$-covers of $M$ up to isomorphism are in one-to-one correspondence with  $H_1(M,\Z)^d$. For any elements $\xi_1,\xi_2\in H_1(M,\Z)$ denote by $\langle \xi_1,\xi_2 \rangle $
the algebraic intersection number of $\xi_1$ with $\xi_2$. Then the  $\Z^d$-cover $\widetilde{M}_\gamma$
determined by $\gamma\in H_1(M,\Z)^d$ has the following properties:   if $\sigma$  is a close curve in $M$ and
\[n=(n_1,\ldots,n_d):=(\langle \gamma_1, [\sigma] \rangle,\ldots,\langle \gamma_d, [\sigma] \rangle ) \in
\mathbb{Z}^d\qquad([\sigma]\in H_1(M,\Z)),\] then $\sigma$ lifts
to a path $\widetilde{\sigma}: [t_0, t_1]\to \widetilde{M}_\gamma$
such that $\sigma(t_1) = n \cdot \sigma(t_0)$, where $\cdot$
denotes the action of $\Z^d$ on $\widetilde{M}_\gamma$.

\begin{remark}
We will always assume that $\Z^d$-cover $\widetilde{M}_\gamma$ is
not degenerated, i.e.\ elements $\gamma_1,\ldots,\gamma_d\in
H_1(M,\Z)$ establish an independent family in $H_1(M,\Q)$.
Otherwise, $\widetilde{M}_\gamma$ can be treated as a cover of
lower rank.
\end{remark}
\begin{remark}
Note that the surface $\widetilde{M}_\gamma$ is connected if and only if
the group homomorphism \[H_1(M,Z)\ni\xi\mapsto \langle\gamma,\xi\rangle:=\big(\langle\gamma_1,\xi\rangle,\ldots,\langle\gamma_d,\xi\rangle\big)\in \Z^d\]
is surjective.
\end{remark}
Let $(M,\omega)$ be a compact translation surface and let $(\widetilde{M},\widetilde{\omega})$ be its $\Z^d$-cover.
Let us consider a directional flows $(\widetilde{\varphi}^\theta_t)_{t\in\R}$ on $(\widetilde{M},\widetilde{\omega})$   such that
the flow $({\varphi}^\theta_t)_{t\in\R}$ on $(M,\omega)$ is minimal and ergodic. Let $I\subset M\setminus\Sigma$ be an interval transversal to the direction $\theta$ with no
self-intersections. Then the Poincar\'e return map $T:I\to I$ is a minimal ergodic interval exchange transformation (IET). Denote by $(I_\alpha)_{\alpha\in\mathcal{A}}$ the family of exchanged
intervals. For every $\alpha\in\mathcal{A}$ we will denote by $\xi_\alpha=\xi_\alpha(\omega,I)\in H_1(M,\Z)$ the homology class of any loop formed by the segment of orbit for
$(\varphi^\theta_t)_{t\in\R}$ starting at any $x\in\Int I_\alpha$ and ending at $Tx$ together with the segment of $I$ that joins $Tx$ and $x$, that we will
denote by $[Tx,x]$.

\begin{proposition}[see \cite{Fr-Ulc:nonerg} for $d=1$]\label{lem_flow_auto}
Let $I\subset M\setminus\Sigma$ be an interval transversal to the
direction $\theta$ with no self-intersections. Then  for every $\gamma\in H_1(M,\Z)^d$ the
directional flow $(\widetilde{\varphi}^\theta_t)_{t\in\R}$  on the
$\Z^d$-cover $(\widetilde{M}_\gamma,\widetilde{\omega}_\gamma)$
has a special representation over the skew product
$T_{\psi_\gamma}:I\times\Z^d\to I\times\Z^d$ of the form
$T_{\psi_\gamma}(x,n)=(Tx,n+\psi_\gamma(x))$, where
$\psi_\gamma:I\to\Z^d$ is a piecewise constant function given by
\begin{equation}\label{eq:psigamma}
\psi_\gamma(x)=\langle \gamma,\xi_\alpha\rangle=\big(\langle \gamma_1,\xi_\alpha\rangle,\ldots,\langle
\gamma_d,\xi_\alpha\rangle\big)  \quad\text{ if }\quad x\in
I_\alpha\quad\text{ for }\alpha\in\mathcal{A}.
\end{equation}
In particular, the ergodicity of the flow
$(\widetilde{\varphi}^\theta_t)_{t\in\R}$ on
$(\widetilde{M}_\gamma,\widetilde{\omega}_\gamma)$ is equivalent
to the ergodicity of the skew product $T_{\psi_\gamma}:I\times
\Z^d\to I\times\Z^d$.
\end{proposition}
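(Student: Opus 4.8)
The plan is to realize both systems through the common transversal $I$ and then reduce everything to first-return dynamics on $p^{-1}(I)$.

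First I would trivialize the cover over the transversal. Since $I$ is an embedded interval, hence contractible, the restriction of $p\colon\widetilde M_\gamma\to M$ to $I$ is a trivial $\Z^d$-cover: $p^{-1}(I)=\bigsqcup_{n\in\Z^d} I^{(n)}$, where $I^{(n)}=n\cdot I^{(0)}$ for a fixed lift $I^{(0)}$. This yields a canonical identification $p^{-1}(I)\cong I\times\Z^d$ compatible with the deck action. Because $\widetilde\omega_\gamma=p^*\omega$, the map $p$ is a translation covering, so it is a local isometry carrying the direction $\theta$ to $\theta$; hence $\widetilde\varphi^\theta$ projects to $\varphi^\theta$ and orbit segments lift with unchanged length. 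Consequently the first return time of $\widetilde\varphi^\theta$ to $p^{-1}(I)$ at a point over $x$ equals the return time $r(x)$ of $\varphi^\theta$ to $I$ at $x$, so $\widetilde\varphi^\theta$ admits a special representation over its first-return map $\widetilde T\colon I\times\Z^d\to I\times\Z^d$ with the very same roof $r$ used in the base representation of $\varphi^\theta$ over $T$.

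The core step is to identify $\widetilde T$ with $T_{\psi_\gamma}$, i.e.\ to compute the $\Z^d$-displacement. Fix $\alpha\in\mathcal A$, a point $x\in\Int I_\alpha$ and a sheet $n\in\Z^d$, and write $(Tx,n')=\widetilde T(x,n)$. Consider the loop $\sigma_\alpha$ obtained by concatenating the orbit segment of $\varphi^\theta$ from $x$ to $Tx$ with the return segment $[Tx,x]\subset I$; by definition $[\sigma_\alpha]=\xi_\alpha$. Lifting $\sigma_\alpha$ from $(x,n)$, the defining property of the cover gives that its endpoint is $\langle\gamma,\xi_\alpha\rangle\cdot(x,n)=(x,\,n+\langle\gamma,\xi_\alpha\rangle)$. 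On the other hand the lift splits in two: the orbit part ends, by definition of $\widetilde T$, at $(Tx,n')$, and the subsequent lift of $[Tx,x]$ projects into $I$, so by the sheet decomposition it cannot leave the sheet $n'$ and must terminate at $(x,n')$. Comparing the two endpoints yields $n'=n+\langle\gamma,\xi_\alpha\rangle=n+\psi_\gamma(x)$, which is exactly \eqref{eq:psigamma}; thus $\widetilde T=T_{\psi_\gamma}$.

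For the ergodicity statement I would invoke the standard correspondence for special flows: measurable sets invariant under a suspension flow are in bijection with measurable sets invariant under its base map (restrict an invariant set to the transversal, respectively saturate a base-invariant set along the flow), and this bijection sends null sets to null sets and co-null sets to co-null sets. Since $r$ is positive, bounded away from $0$, and integrable, this correspondence remains valid for the (infinite) invariant measure $\mu_{\widetilde\omega_\gamma}$, which under the special representation becomes $\Leb_I\otimes(\text{counting on }\Z^d)$ integrated along the flow direction. Hence $\widetilde\varphi^\theta$ is ergodic on $(\widetilde M_\gamma,\widetilde\omega_\gamma)$ if and only if $T_{\psi_\gamma}$ is ergodic on $I\times\Z^d$. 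The main obstacle is the bookkeeping in the core step: one must check that the return segment genuinely stays inside a single sheet (this is where the hypothesis that $I$ has no self-intersections enters, guaranteeing $p^{-1}(I)$ is a disjoint union), and that the orientation conventions defining $\langle\cdot,\cdot\rangle$, the deck action, and the class $\xi_\alpha$ are mutually consistent so that the sign in $n+\langle\gamma,\xi_\alpha\rangle$ is correct.
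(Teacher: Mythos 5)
Your argument is correct, and it supplies exactly the standard lifting proof: the paper itself gives no proof of this proposition, delegating it to \cite{Fr-Ulc:nonerg} (where the case $d=1$ is handled by the same scheme you use --- trivializing the cover over the contractible transversal, computing the sheet displacement of the first-return map via the defining lifting property of $\widetilde{M}_\gamma$ applied to the loop representing $\xi_\alpha$, and invoking the invariant-set correspondence between a special flow and its base map, which is insensitive to the infiniteness of the measure). Your handling of the two delicate points --- that the lift of $[Tx,x]$ stays in a single sheet because $p^{-1}(I)$ is a disjoint union (this is where the no-self-intersection hypothesis enters), and that the roof function descends from the base since $p$ is a translation covering --- is exactly what the omitted proof requires, so nothing is missing.
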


\begin{notation}
We will use the notation $\psi_\gamma$ in a more general context
where $\gamma\in H_1(M,\R)^d$. Then $\psi_\gamma:I\to\R^d$ is also
a piecewise constant function given by \eqref{eq:psigamma}.
\end{notation}

\section{Cocycles for transformations and essential values.}\label{sect:cocycles}
Given  an ergodic automorphism $T$ of standard probability space
$\xbm$,  a locally compact abelian second countable group $G$ and
a measurable map $\psi:X \rightarrow G$, called a cocycle,
consider the skew-product extension \[T_\psi: (X\times
G,\mathcal{B}\times\mathcal{B}_G,\mu\times m_G) \rightarrow
(X\times G,\mathcal{B}\times\mathcal{B}_G,\mu\times m_G)\]
($\mathcal{B}_G$ is the Borel $\sigma$-algebra on $G$) given by \[
T_{\psi}(x,y)=(Tx,y+{\psi}(x)).\]
Clearly $T_\psi$ preserves the product of $\mu$ and  the Haar
measure $m_G$ on $G$. Moreover, for any $n\in\Z$ we have
\[ T^n_{\psi}(x,y)=(T^nx,y+{\psi}^{(n)}(x)), \]
where
\begin{equation}\label{cocycledef}
\psi^{(n)}(x)=\left\{
\begin{array}{ccl}
\psi(x)+\psi(Tx)+\ldots+\psi(T^{n-1}x) & \text{if} & n>0 \\
0 & \text{if} & n=0\\
-(\psi(T^nx)+\psi(T^{n+1}x)+\ldots+\psi(T^{-1}x)) & \text{if} &
n<0.
\end{array}
\right.
\end{equation}
Two cocycles $\psi_1,\psi_2: X\to G$ for $T$ are called {\em
cohomologous} if there exists a measurable function $g:X\to G$
(called the {\em transfer function}) such that
\[\psi_1=\psi_2+g-g\circ T.\] Then the corresponding skew products
$T_{\psi_1}$ and $T_{\psi_2}$ are measure-theoretically isomorphic
via the map $(x,y)\mapsto(x,y+g(x))$. A cocycle $\psi:X\to \R$ is
a {\em coboundary} if it is cohomologous to the zero cocycle.

\begin{definition} An element $g\in {G}$ is said to be an {\em
essential value} of $\psi:X\to G$, if for each open neighborhood
$V_g$ of $g$ in ${G}$ and each measurable set $B\subset X$ with
$\mu(B)>0$, there exists  $n\in\Z$ such that
\begin{eqnarray}
\mu(B\cap T^{-n}B\cap\{x\in X:\psi^{(n)}(x)\in V_g\})>0.
\label{val-ess}
\end{eqnarray}
\end{definition}

\begin{proposition}[see \cite{Sch}]
The set of essential values $E_G(\psi)$ is a closed subgroup of $G$ and the
skew product $T_{\psi}$ is ergodic if and only if $E_G(\psi)=G$.
\end{proposition}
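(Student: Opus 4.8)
The plan is to prove the three algebraic properties of $E_G(\psi)$ and then the ergodicity criterion, using throughout the cocycle identity $\psi^{(m+n)}(x)=\psi^{(m)}(x)+\psi^{(n)}(T^mx)$ (read off from \eqref{cocycledef}) and the $T$-invariance of $\mu$.

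First I would dispatch the closed-subgroup statement. Taking $n=0$ in \eqref{val-ess} shows $0\in E_G(\psi)$. For closedness, if $g\notin E_G(\psi)$ then some open $V_g$ and some $B$ with $\mu(B)>0$ make the left-hand side of \eqref{val-ess} vanish for all $n$; the same $B$ then witnesses $g'\notin E_G(\psi)$ for every $g'$ having a neighbourhood inside $V_g$, so the complement is open. For symmetry, given $g\in E_G(\psi)$ I apply the definition to $B$ and the neighbourhood $-V_{-g}$ of $g$ to get a positive-measure set $C\subseteq B\cap T^{-m}B\cap\{\psi^{(m)}\in -V_{-g}\}$; since $\psi^{(-m)}(T^mx)=-\psi^{(m)}(x)$ and $\mu$ is $T$-invariant, $T^mC$ has positive measure and sits inside $B\cap T^{m}B\cap\{\psi^{(-m)}\in V_{-g}\}$, giving $-g\in E_G(\psi)$. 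For additivity, given $g_1,g_2\in E_G(\psi)$ and a neighbourhood $V\supseteq V_1+V_2$ of $g_1+g_2$, I first extract $A_1=B\cap T^{-n_1}B\cap\{\psi^{(n_1)}\in V_1\}$ of positive measure, then apply the definition of $g_2$ to $A_1$ to extract $A_2=A_1\cap T^{-n_2}A_1\cap\{\psi^{(n_2)}\in V_2\}$ of positive measure; the cocycle identity gives $\psi^{(n_1+n_2)}(x)=\psi^{(n_2)}(x)+\psi^{(n_1)}(T^{n_2}x)\in V_2+V_1\subseteq V$ on $A_2$, while $A_2\subseteq B\cap T^{-(n_1+n_2)}B$, so $g_1+g_2\in E_G(\psi)$.

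For the ergodicity criterion I treat the two implications separately. The implication ``$T_\psi$ ergodic $\Rightarrow E_G(\psi)=G$'' I would prove by contraposition with an explicit invariant set. If $g\notin E_G(\psi)$ (so $g\ne 0$), fix $V\ni g$ and $B$ with $\mu(B\cap T^{-k}B\cap\{\psi^{(k)}\in V\})=0$ for all $k$, choose a symmetric open $W\ni 0$ with $g+W+W\subseteq V$, write $R_g(x,y)=(x,y+g)$, and form the $T_\psi$-invariant saturation $A=\bigcup_{n\in\Z}T_\psi^n(B\times W)$. A short computation shows $A\cap R_gA$ is null: any point of it arises from indices $n,m$ and base points $x_1,\,x_2=T^{n-m}x_1\in B$ with $\psi^{(n-m)}(x_1)\in g+(W+W)\subseteq V$, forcing $x_1$ into the null set $B\cap T^{-(n-m)}B\cap\{\psi^{(n-m)}\in V\}$; these countably many null sets cover the base coordinates, and $T_\psi$ preserves the measure. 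Since $A\supseteq B\times W$ and $R_gA\supseteq B\times(g+W)$ are disjoint (mod null) and both of positive measure, $A$ is a nontrivial invariant set and $T_\psi$ is not ergodic.

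The reverse implication ``$E_G(\psi)=G\Rightarrow T_\psi$ ergodic'' is where the real work lies. Let $f\in L^\infty(X\times G)$ be $T_\psi$-invariant; iterating invariance gives $f(T^nx,y+\psi^{(n)}(x))=f(x,y)$, i.e.\ the slices $f_x:=f(x,\cdot)$ satisfy $f_{T^nx}(\cdot)=f_x(\cdot-\psi^{(n)}(x))$. The crux is to show that each $g\in E_G(\psi)$ is a ``period'' of $f$, namely $f(x,y+g)=f(x,y)$ a.e.; granting this for all $g\in G$ makes every slice $f_x$ translation-invariant, hence a.e.\ constant $c(x)$, and invariance forces $c\circ T=c$, so $c$ and therefore $f$ is constant by ergodicity of $T$. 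To prove the period claim I would fix a finite measure $\lambda$ on $G$ equivalent to $m_G$ and a compact $K$, and argue by approximation: using separability of $L^1(K,\lambda)$, partition $X$ into pieces on which $x\mapsto f_x|_K$ is $\epsilon$-close to a constant; were $\|f_x-f_x(\cdot-g)\|_{L^1(K,\lambda)}\ge\delta$ on a positive-measure set, I would pick such a piece $B_i$ with $\epsilon\ll\delta$ and apply \eqref{val-ess} for $g$ with a small $V\ni g$ to get positive-measure $S\subseteq B_i\cap T^{-n}B_i\cap\{\psi^{(n)}\in V\}$; for $x\in S$ one has $f_x\approx f_{T^nx}=f_x(\cdot-\psi^{(n)}(x))\approx f_x(\cdot-g)$ by uniform continuity of translation, contradicting the lower bound. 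Letting $\delta\to0$ and $K\uparrow G$ yields the claim. I expect this last approximation to be the main obstacle, since it requires selecting the partition, the neighbourhood $V$, and the compact $K$ compatibly and controlling continuity of translation uniformly in the slice.
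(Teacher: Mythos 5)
The paper itself gives no proof of this proposition---it is quoted from Schmidt \cite{Sch}---so your attempt can only be measured against the standard argument, which is indeed the route you follow: the subgroup property via the cocycle identity, necessity of $E_G(\psi)=G$ via a saturated invariant set, and sufficiency via showing every essential value is a period of every $T_\psi$-invariant function. Your first two parts are correct, with one small repair needed in the necessity argument: among the terms of $A\cap R_gA$ you must also handle the diagonal indices $n=m$, where $k=n-m=0$ and $\psi^{(0)}\equiv 0$, so the set $B\cap T^{-k}B\cap\{\psi^{(k)}\in V\}$ is null only if $0\notin V$. Since $g\neq 0$ and $G$ is Hausdorff you may shrink $V$ so that $0\notin V$ (equivalently, choose the symmetric $W$ with $g\notin W+W$, which makes the $n=m$ intersections empty); the rest of that computation, and the symmetry and additivity arguments via $\psi^{(-m)}(T^mx)=-\psi^{(m)}(x)$ and $\psi^{(n_1+n_2)}(x)=\psi^{(n_2)}(x)+\psi^{(n_1)}(T^{n_2}x)$, are fine.

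The genuine gap is the step you yourself flag in the sufficiency direction: ``$f_x(\cdot-\psi^{(n)}(x))\approx f_x(\cdot-g)$ by uniform continuity of translation.'' Translation is strongly continuous on $L^1$ but \emph{not} uniformly over a family of functions: the modulus of continuity of $h\mapsto f_x(\cdot-h)|_K$ in $L^1(K,\lambda)$ depends on the slice $f_x$, whereas the neighbourhood $V$ in \eqref{val-ess} must be fixed before the essential-value property hands you $n$ and the points $x\in S$. As written the estimate can fail for slices with increasingly fine oscillation. The standard repair, compatible with your scheme, is to exploit the order of quantifiers you already have: take the partition centers not to be arbitrary constants in $L^1(K,\lambda)$ but elements $\phi_i$ of a countable dense subset of $C_c(G)$, measured in $L^1(K'',\lambda)$ for nested compacts $K\cup(K+g)\cup(K+V_0)\subseteq K'\subseteq K''$ (with $V_0$ a fixed relatively compact neighbourhood of $g$). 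Fix the positive-measure piece $B_i$ (hence the single continuous function $\phi_i$) \emph{first}, then choose $V\subseteq V_0$ using the uniform continuity of $\phi_i$ alone, and only then invoke \eqref{val-ess}. The triangle inequality
\[
\|f_x(\cdot-h)-f_x(\cdot-g)\|_{L^1(K,\lambda)}\le 2\,\|f_x-\phi_i\|_{L^1(K'',\lambda)}+\|\phi_i(\cdot-h)-\phi_i(\cdot-g)\|_{L^1(K,\lambda)}
\]
then gives the missing bound uniformly for $h\in V$. With this fix, plus the routine upgrade (countable dense subgroup of the second countable $G$ together with continuity of $g\mapsto m_G$-translation in measure) from ``each $g\in G$ is a period for a.e.\ $x$'' to ``a.e.\ slice is translation-invariant, hence a.e.\ constant,'' your proof closes and is, in substance, the proof in \cite{Sch}.
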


The following result will be applied to prove that some skew products are not ergodic.
\begin{proposition}[see \cite{Sch}]\label{basicessentialvalues}
If $H$ is a
closed subgroup of $G$ and $\psi:X\to H$ then
$E_G(\psi)=E_H(\psi)\subset H$. If $\psi_1,\psi_2:X\to G$  are
cohomologous then ${E}_G(\psi_1)={E}_G(\psi_2)$. In particular, if $\psi:X\to G$ is a coboundary then $E_G(\psi)=\{0\}$.
\end{proposition}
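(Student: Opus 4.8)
The plan is to establish the three assertions in turn, the middle one carrying essentially all the difficulty. For the first, I would note that since $\psi$ takes values in $H$, formula \eqref{cocycledef} shows that every partial sum $\psi^{(n)}(x)$ lies in $H$. To get $E_G(\psi)\subset H$, suppose $g\in G\setminus H$; as $H$ is closed there is an open neighbourhood $V_g$ of $g$ with $V_g\cap H=\emptyset$, whence $\{x:\psi^{(n)}(x)\in V_g\}=\emptyset$ for every $n$ and \eqref{val-ess} fails, so $g\notin E_G(\psi)$. For $g\in H$, the equality $E_G(\psi)=E_H(\psi)$ follows because $H$ carries the subspace topology: any neighbourhood of $g$ in $H$ has the form $V_g\cap H$ with $V_g$ open in $G$, and since $\psi^{(n)}(x)\in H$ always, one has $\{x:\psi^{(n)}(x)\in V_g\}=\{x:\psi^{(n)}(x)\in V_g\cap H\}$, so the defining conditions over $G$ and over $H$ are identical.

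For the second assertion, write $\psi_1=\psi_2+g-g\circ T$ with $g:X\to G$ measurable. Telescoping in \eqref{cocycledef} yields the key identity
\[\psi_1^{(n)}(x)=\psi_2^{(n)}(x)+g(x)-g(T^nx)\qquad(n\in\Z).\]
By symmetry (the roles of $\psi_1,\psi_2$ interchange upon replacing $g$ by $-g$) it suffices to show $E_G(\psi_2)\subset E_G(\psi_1)$. Fix $a\in E_G(\psi_2)$, an open neighbourhood $V_a$ of $a$, and $B$ with $\mu(B)>0$. Since $G$ is second countable, I would partition $X$ into countably many measurable pieces on each of which $g$ varies by less than a prescribed amount; one such piece meets $B$ in a set $B_1$ of positive measure on which $g$ stays within a small ball about some $c\in G$. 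Choose a neighbourhood $V_a'$ of $a$ and a neighbourhood $W$ of $0$ with $V_a'+W\subset V_a$, small enough that every difference $g(x)-g(T^nx)$ with $x,T^nx\in B_1$ lies in $W$. Applying $a\in E_G(\psi_2)$ to $B_1$ and $V_a'$ produces an $n$ with $\mu(B_1\cap T^{-n}B_1\cap\{\psi_2^{(n)}\in V_a'\})>0$; on this set both $x$ and $T^nx$ lie in $B_1$, so the identity forces $\psi_1^{(n)}(x)\in V_a$, and as $B_1\subset B$ this verifies \eqref{val-ess} for $\psi_1$, giving $a\in E_G(\psi_1)$.

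The third assertion is then immediate: a coboundary is by definition cohomologous to the zero cocycle, so the second assertion gives $E_G(\psi)=E_G(0)$; for the zero cocycle $\psi^{(n)}\equiv 0$, so $\{x:\psi^{(n)}(x)\in V_g\}$ equals $X$ when $0\in V_g$ (take $n=0$ in \eqref{val-ess}) and is empty otherwise, whence $E_G(0)=\{0\}$. I expect the second assertion to be the main obstacle, the delicate point being that the transfer function $g$ is merely measurable. The resolution is to localize to a set $B_1$ on which $g$ is nearly constant and to exploit that on $B_1\cap T^{-n}B_1$ both $x$ and $T^nx$ return to $B_1$: this makes the coboundary increment $g(x)-g(T^nx)$ small uniformly in $n$, which is exactly what allows the single neighbourhood shrinkage $V_a'\to V_a$ to work for every $n$ at once.
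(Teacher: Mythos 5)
Your proof is correct: all three assertions are established soundly, with the telescoping identity $\psi_1^{(n)}(x)=\psi_2^{(n)}(x)+g(x)-g(T^nx)$ and the localization of the merely measurable transfer function to a set $B_1$ where it is nearly constant being exactly the classical argument of Schmidt, which is what the paper invokes by citing \cite{Sch} without giving a proof of its own. The only cosmetic point is an ordering slip in the second part (the neighbourhoods $V_a'$ and $W$ with $V_a'+W\subset V_a$ should be fixed first, and the fineness of the partition producing $B_1$ chosen afterwards so that the differences $g(x)-g(T^nx)$ land in $W$), which is trivially repaired and does not affect the argument.
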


\begin{lemma}\label{lem:skewpnonerg}
Suppose that $\psi:X\to\Z^d\subset \R^d$ is cohomologous, as
a cocycle taking values in $\R^d$, to a cocycle $\psi^*:X\to\R^d$
for which there exist $\alpha_1,\ldots,\alpha_d\in \R$ not all
rational such that the function
$\sum_{j=1}^d\alpha_j\psi^*_j:X\to\R$ takes only integer values.
Then the skew product $T_\psi$ is not ergodic.
\end{lemma}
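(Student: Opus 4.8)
The plan is to prove non-ergodicity by exhibiting an explicit, non-constant, bounded measurable function on $X\times\Z^d$ that is invariant under $T_\psi$; the existence of such a function is incompatible with ergodicity (an ergodic automorphism, even of an infinite measure space, admits only a.e.\ constant invariant functions). Write $\alpha=(\alpha_1,\ldots,\alpha_d)$ and let $\langle\alpha,v\rangle=\sum_{j=1}^d\alpha_j v_j$ be the pairing of $\alpha$ with $v\in\R^d$. By the cohomology hypothesis there is a measurable transfer function $g:X\to\R^d$ with $\psi=\psi^*+g-g\circ T$, and by assumption $\langle\alpha,\psi^*(x)\rangle\in\Z$ for a.e.\ $x$.

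First I would define $F:X\times\Z^d\to\C$ by
\[
F(x,n)=\exp\big(2\pi i\,(\langle\alpha,n\rangle+\langle\alpha,g(x)\rangle)\big).
\]
This is measurable and satisfies $|F|\equiv 1$. To verify $T_\psi$-invariance I substitute $\psi(x)=\psi^*(x)+g(x)-g(Tx)$ and compute
\[
F(T_\psi(x,n))=F(Tx,n+\psi(x))=\exp\big(2\pi i\,(\langle\alpha,n\rangle+\langle\alpha,\psi^*(x)\rangle+\langle\alpha,g(x)\rangle)\big),
\]
where the term $-\langle\alpha,g(Tx)\rangle$ coming from $\langle\alpha,\psi(x)\rangle$ cancels against the $+\langle\alpha,g(Tx)\rangle$ arising from evaluating $g$ at $Tx$. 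Since $\langle\alpha,\psi^*(x)\rangle$ is an integer, $\exp(2\pi i\langle\alpha,\psi^*(x)\rangle)=1$, and therefore $F(T_\psi(x,n))=F(x,n)$; thus $F$ is $T_\psi$-invariant.

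It then remains to check that $F$ is not a.e.\ constant. Because not all $\alpha_j$ are rational, I would fix an index $j_0$ with $\alpha_{j_0}$ irrational, so that $\zeta:=\exp(2\pi i\alpha_{j_0})\neq 1$. Writing $e_{j_0}$ for the $j_0$-th standard basis vector of $\Z^d$, the definition gives $F(x,e_{j_0})=\zeta\,F(x,0)$ for every $x$. If $F$ were a.e.\ equal to a constant $c$, then restricting to the two horizontal slices $X\times\{0\}$ and $X\times\{e_{j_0}\}$—each of which carries the measure $\mu$ for the product of $\mu$ with counting measure—would force $c=\zeta c$, hence $c=0$, contradicting $|F|\equiv 1$. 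So $F$ is a non-constant invariant function and $T_\psi$ is not ergodic.

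The invariance computation is routine once the cohomology relation is inserted; the one step requiring care is the non-constancy argument, since $X\times\Z^d$ carries an infinite measure and ``a.e.\ constant'' must be read accordingly. The key observation there is that every horizontal slice $X\times\{n\}$ has positive measure, so an a.e.\ identity on the whole space descends to each slice and yields a genuine pointwise relation between the slices $n=0$ and $n=e_{j_0}$. (Alternatively, one could stay inside the essential-values formalism: the continuous homomorphism $L(v)=\langle\alpha,v\rangle$ carries $E_{\R^d}(\psi)=E_{\R^d}(\psi^*)$ into $E_{\R}(L\circ\psi^*)\subset\Z$ via Proposition~\ref{basicessentialvalues}, so ergodicity would give $E_{\R^d}(\psi)=\Z^d$ and hence $\alpha_j=L(e_j)\in\Z$ for all $j$, a contradiction; this route, however, needs the functoriality of essential values under continuous homomorphisms, which is not stated above, so I expect the explicit invariant function to be the cleaner path.)
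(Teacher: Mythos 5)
Your proof is correct, but it takes a genuinely different route from the paper. You build an explicit invariant eigenfunction-type function $F(x,n)=\exp\bigl(2\pi i(\langle\alpha,n\rangle+\langle\alpha,g(x)\rangle)\bigr)$; the invariance computation and the slice argument for non-constancy (which correctly handles the infinite-measure reading of ``a.e.\ constant'', using that each slice $X\times\{n\}$ has positive measure) are both sound, and exhibiting a bounded non-constant invariant function does rule out ergodicity for a $\sigma$-finite measure-preserving system. The paper instead stays entirely within Schmidt's essential-values formalism: since $\sum_j\alpha_j\psi^*_j\in\Z$ means precisely that $\psi^*$ takes values in the closed subgroup $G(\alpha)=\{x\in\R^d:\sum_j\alpha_jx_j\in\Z\}$, Proposition~\ref{basicessentialvalues} gives directly $E_{\Z^d}(\psi)=E_{\R^d}(\psi)=E_{\R^d}(\psi^*)\subset G(\alpha)$, and $\Z^d\cap G(\alpha)$ is proper in $\Z^d$ because $e_{j_0}\notin G(\alpha)$ when $\alpha_{j_0}$ is irrational. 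Note that your parenthetical concern about the alternative route is misplaced: no unstated functoriality of essential values under homomorphisms is needed, since one applies the stated proposition with $H=G(\alpha)$ (a cocycle taking values in a closed subgroup $H$ has $E_{\R^d}(\psi^*)=E_H(\psi^*)\subset H$), which is exactly what the paper does. Your construction is more elementary and self-contained --- in effect you exhibit the eigenfunction associated to the character $v\mapsto e^{2\pi i\langle\alpha,v\rangle}$ that annihilates the essential-value group --- whereas the paper's argument buys slightly more information (the inclusion $E_{\Z^d}(\psi)\subset\Z^d\cap G(\alpha)$ constrains the whole group of essential values, not just its non-fullness) and integrates seamlessly with the machinery the paper uses elsewhere.
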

\begin{proof}
Suppose that $m\in E_{\Z^d}(\psi)$. By
Proposition~\ref{basicessentialvalues}, we have
\[m\in E_{\Z^d}(\psi)=E_{\R^d}(\psi)=E_{\R^d}(\psi^*).\]
By assumption, the values of the cocycle $\psi^*$ belong to the
closed subgroup
\[G(\alpha):=\Big\{x\in\R^d:\sum_{j=1}^d\alpha_jx_j\in\Z\Big\},\]
so $E_{\R^d}(\psi^*)\subset G(\alpha)$. It follows that
$E_{\Z^d}(\psi)$ is a subgroup of $\Z^d\cap G(\alpha)$. Since at
least one numer $\alpha_1,\ldots,\alpha_d$ is irrational, the
group $\Z^d\cap G(\alpha)$ is a proper subgroup of $\Z^d$. Thus
$E_{\Z^d}(\psi)\neq \Z^d$ which yields non-ergodicity of the skew
product $T_{\psi}$.
\end{proof}
\subsection{Basic algebraic lemma}
The following result, together with Lemma~\ref{lem:skewpnonerg}, will help us (in Section~\ref{sec:nonerggen}) to relate
non-vanishing of some Lyapunov exponents of a Kontsevich-Zorich cocycle with non-ergodicity of a.e.\ directional flow on some $\Z^d$-cover of $(M,\omega)$.
\begin{lemma}\label{alglem}
Let $V$ be a real linear space of dimension $d_1+d_2$ and let
$a_1,\ldots, a_{d_1},$ $b_1,\ldots, b_{d_2}$ be its basis. Denote
by $V_\Z$ the lattice generated by the basis. Suppose that
$F\subset V$ is an $\R$-subspace of dimension $d_2$ such that
$F\cap V_\Z=\{0\}$. Then there exists real numbers
$\alpha_1,\ldots, \alpha_{d_1}$ not all rational and $c\in F$ such
that $\sum_{j=1}^{d_1}\alpha_ja_j+c\in V_\Z$.
\end{lemma}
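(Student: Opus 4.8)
The plan is to analyze how the lattice $V_\Z$ sits relative to the subspace $A:=\operatorname{span}(a_1,\dots,a_{d_1})$ and its image in the quotient by $F$. First I would fix the coordinate system $V=A\oplus B$ with $B:=\operatorname{span}(b_1,\dots,b_{d_2})$, so that $V_\Z=\bigoplus_j\Z a_j\oplus\bigoplus_k\Z b_k$. (Note that $d_2=0$ would make the statement false, so $d_2\ge1$ is tacitly assumed, as in all intended applications.) Writing $\pi:V\to V/F$ for the quotient map, the goal is equivalent to producing $w\in V_\Z$ with $\pi(w)\in\pi(A)$ for which the coefficients against $\pi(a_1),\dots,\pi(a_{d_1})$ can be taken not all rational. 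I would then split on whether $A\cap F=\{0\}$.

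If $A\cap F=\{0\}$, then $V=A\oplus F$ and every lattice point $w$ has \emph{unique} coordinates $w=\sum_j\alpha_j(w)a_j+c(w)$ with $c(w)\in F$. Here the hypothesis $F\cap V_\Z=\{0\}$ does the work: I claim some $\alpha_j(b_k)$ is irrational. Indeed, were all $\alpha_j(b_k)$ rational, I would clear denominators by an integer $N$, so that $Nb_k-\sum_j N\alpha_j(b_k)a_j$ is simultaneously an integer combination of the basis (hence in $V_\Z$) and equal to $Nc(b_k)\in F$; by $F\cap V_\Z=\{0\}$ this forces $b_k=\sum_j\alpha_j(b_k)a_j\in A$, contradicting linear independence of the basis. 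I would then simply take $w=b_k$ with $\alpha=(\alpha_j(b_k))_j$ and $c=c(b_k)$.

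The remaining case $A\cap F\ne\{0\}$ is where uniqueness of the decomposition breaks down, and it is the part I expect to be the real obstacle. Here I would exploit precisely this degeneracy: choose a nonzero $u\in A\cap F$, write $u=\sum_j\lambda_j a_j$, and observe that after rescaling $u$ by an irrational factor such as $\sqrt2$ I may assume the $\lambda_j$ are not all rational while still $\sum_j\lambda_j a_j\in F$. Starting from the trivial lattice identity $a_1=\sum_j\delta_{1j}a_j+0\in V_\Z$, I would add this hidden relation: set $\alpha_j=\delta_{1j}+\lambda_j$ and $c=-\sum_j\lambda_j a_j\in F$, so that $\sum_j\alpha_j a_j+c=a_1\in V_\Z$ while $(\alpha_j)_j$ is not all rational (adding $1$ in a single coordinate cannot cancel all irrationality). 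Combining the two cases settles the lemma; the conceptual content is that $F\cap V_\Z=\{0\}$ forces irrational ``slopes'' either directly, when $A$ and $F$ are in direct sum, or, when they overlap, the overlap itself supplies an irrational direction along which to perturb.
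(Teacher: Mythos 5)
Your proof is correct and is essentially the paper's own argument: your dichotomy $A\cap F=\{0\}$ versus $A\cap F\neq\{0\}$ is exactly the paper's case split on whether the coordinate map $B:F\to\R^{d_2}$ (the $b$-coefficients of elements of $F$) is injective, since $A\cap F=\ker B$; your Case 1 vector $c(b_k)$ is precisely the paper's choice $c\in B^{-1}(\Z^{d_2}\setminus\{0\})$ with the identical clear-denominators use of $F\cap V_\Z=\{0\}$, and your Case 2 vector $u$ is the paper's $c\in\ker B\setminus\{0\}$, where your $\sqrt{2}$-rescaling is a harmless (in fact unnecessary) substitute, as such a $u$ automatically has an irrational coefficient by the same denominator-clearing contradiction. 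Your side remark that $d_2\geq 1$ is tacitly required is also accurate and consistent with the paper's applications, where $d_2=d_+>0$.
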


\begin{proof}
Let us consider two linear maps $A:F\to\R^{d_1}$ and
$B:F\to\R^{d_2}$ determined by
\[c=\sum_{j=1}^{d_1}A_j(c)a_j+\sum_{j=1}^{d_2}B_j(c)b_j\quad\text{ for }\quad c\in F.\]
Next note that there exists a non-zero element $c\in F$ such that
$B(c)\in \Z^{d_2}$. Indeed, if $B$ is one-to-one then $B$ is a
bijection and we can choose any $c\in B^{-1}(\Z^{d_2}\setminus
\{0\})$. If $B$ is not one-to-one then we can take any $c\in \ker
B\setminus\{0\}$.

Fix such non-zero $c\in F$ with $B(c)\in \Z^{d_2}$. Then at least one real number $A_j(c)$,
$j=1,\ldots,d_1$ is irrational. Otherwise, multiplying vector $c$
by the least common multiple $m$ of the denominators of $A_j(c)$,
$j=1,\ldots,d_1$ we have
\[F\ni mc=\sum_{j=1}^{d_1}mA_j(c)a_j+\sum_{j=1}^{d_2}mB_j(c)b_j\in V_\Z,\]
which is impossible.
Finally, take $\alpha_j:=-A_j(c)$ for $j=1,\ldots,d_1$. Then
\[c+\sum_{j=1}^{d_1}\alpha_ja_j=\sum_{j=1}^{d_2}B_j(c)b_j\in V_\Z,\]
which is the desired conclusion.
\end{proof}

\section{The Teichm\"uller flow and the Kontsevich-Zorich cocycle}\label{Teich:sec}
Given  a connected compact oriented surface $M$,
denote by $\operatorname{Diff}^+(M)$ the
group of orientation-preserving homeomorphisms of $M$. Denote by $\operatorname{Diff}_0^+(M)$ the
subgroup of elements $\operatorname{Diff}^+(M)$ which are
isotopic to the identity. Let us denote by
$\Gamma(M):=\operatorname{Diff}^+(M)/\operatorname{Diff}_0^+(M)$
the {\em mapping-class} group. We will denote by $\mathcal{T}(M)$
(respectively $\mathcal{T}_1(M)$ ) the {\em Teichm\"uller space of
Abelian differentials } (respectively of unit area Abelian
differentials), that is the space of orbits of the natural action
of $\operatorname{Diff}_0^+(M)$ on the space of all
Abelian differentials on $M$ (respectively, the ones with total
area $\mu_\omega(M)=1$). We
will denote by $\mathcal{M}(M)$ ($\mathcal{M}_1(M)$) the {\em
moduli space of (unit area) Abelian differentials}, that is the
space of orbits of the natural action of
$\operatorname{Diff}^+(M)$ on the space of (unit area)
Abelian differentials on $M$. Thus
$\mathcal{M}(M)=\mathcal{T}(M)/\Gamma(M)$ and
$\mathcal{M}_1(M)=\mathcal{T}_1(M)/\Gamma(M)$.

The group $SL(2,\R)$ acts naturally on  $\mathcal{T}_1(M)$ and $\mathcal{M}_1(M)$ as follows.  Given a translation structure $\omega$, consider the charts
given by  local primitives of the holomorphic $1$-form. The new charts defined by postcomposition of this charts with an element of $SL(2,\R)$ yield a new
complex structure and a new differential which is Abelian with respect to this new complex structure, thus a new translation structure.
We denote by $g\cdot \omega$ the translation structure  on $M$
obtained acting by $g \in SL(2,\R)$ on a translation structure
$\omega$ on $M$.

The {\em Teichm\"uller flow} $(g_t)_{t\in\R}$ is the restriction
of this action to the diagonal subgroup
$(\operatorname{diag}(e^t,e^{-t}))_{t\in\R}$ of $SL(2,\R)$ on
$\mathcal{T}_1(M)$ and $\mathcal{M}_1(M)$. We will deal also with
the rotations $(r_{\theta})_{\theta\in S^1}$ that acts on
$\mathcal{T}_1(M)$ and $\mathcal{M}_1(M)$ by
$r_\theta\omega=e^{i\theta}\omega$.


\subsubsection*{Kontsevich-Zorich cocycle.}
 The {\em Kontsevich-Zorich (KZ) cocycle}
$(G^{KZ}_t)_{t\in\R}$ is the quotient of the trivial cocycle
\[g_t\times\operatorname{Id}:\mathcal{T}_1(M)\times H_1(M,\R)\to\mathcal{T}_1(M)\times H_1(M,\R)\]
by the action of the mapping-class group
$\Gamma(M):=\Gamma(M,\emptyset)$. The mapping class group acts on
the fiber $H_1(M,\R)$ by induced maps. The cocycle
$(G^{KZ}_t)_{t\in\R}$ acts on the cohomology vector bundle
\[\mathcal{H}_1(M,\R)=(\mathcal{T}_1(M)\times H_1(M,\R))/\Gamma(M)\]
 over the Teichm\"uller flow
$(g_t)_{t \in \R}$ on the moduli space $\mathcal{M}_1(M)$.

Clearly the fibers of the  bundle $\mathcal{H}_1(M,\R)$ can be
identified with $H_1(M,\R)$.  The space $H_1(M,\R)$ is endowed
with the symplectic form given by the algebraic intersection
number. This symplectic structure  is preserved by the action of
the mapping-class group and hence is invariant under the action of
$SL(2,\R)$.

The standard definition of KZ-cocycle is via the cohomological
bundle. The identification of the homological and cohomological
bundle and the corresponding KZ-cocycles is established by the
Poincar\'e duality $\mathcal{P}:H_1(M,\R)\to H^1(M,\R)$. This
correspondence allow us to define so called Hodge norm (see
\cite{For-dev} for cohomological bundle) on each fiber of the
bundle $\mathcal{H}_1(M,\R)$. The norm on the fiber $H_1(M,\R)$
over $\omega\in\mathcal{M}_1(M)$ will be denoted by
$\|\,\cdot\,\|_\omega$.

\subsubsection*{Lyapunov exponents and Oseledets splitting.}
Let  $\omega\in\mathcal{M}_1(M)$ and denote by
$\mathcal{M}=\overline{SL(2,\R)\omega}$ the closure of the $SL(2,\R)$-orbit of $\omega$ in $\mathcal{M}_1(M)$. The celebrated result of Eskin, Mirzakhani and Mohammadi, proved in
\cite{EMM} and \cite{EM}, says that $\mathcal{M}\subset
\mathcal{M}_1(M)$ is an affine $SL(2,\R)$-invariant submanifold.
Denote by $\nu_{\mathcal{M}}$ the corresponding affine
$SL(2,\R)$-invariant probability measure supported on
$\mathcal{M}$. Moreover, the measure $\nu_{\mathcal{M}}$ is ergodic under
the action of the Teichm\"uller flow. It follows that $\nu_{\mathcal{M}}$-almost every element of $\mathcal{M}$
is Birkhoff generic, i.e.\ pointwise ergodic theorem hold for the Teichm\"uller flow and every continuous integrable function on $\mathcal{M}$.
The following recent result is more refined and yields Birkhoff generic elements among $r_\theta\omega$  for $\theta\in S^1$.

\begin{theorem}[see \cite{Es-Ch}]\label{thm:esch}
For every $\phi\in C_c(\mathcal{M}_1(M))$ and almost all
$\theta\in S^1$ we have
\begin{equation}\label{eq:birk}
\lim_{T\to\infty}\frac{1}{T}\int_0^T\phi(g_tr_\theta\omega)\,dt=\int_{\mathcal{M}}\phi\,d\nu_{\mathcal{M}}.
\end{equation}
\end{theorem}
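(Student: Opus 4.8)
The plan is to split Theorem~\ref{thm:esch} into two parts: an equidistribution statement for circle averages, which is essentially the measure-classification machinery already invoked above, and an upgrade from averaged convergence to pointwise convergence of the time average for almost every $\theta$. The second part carries the real difficulty.

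First I would record the equidistribution of circle averages. Since $r_\theta\in SL(2,\R)$, we have $SL(2,\R)r_\theta\omega=SL(2,\R)\omega$, so every point $r_\theta\omega$ lies in $\mathcal{M}=\overline{SL(2,\R)\omega}$ and the relevant invariant measure for the whole circle is $\nu_{\mathcal{M}}$. The theorem of Eskin, Mirzakhani and Mohammadi, together with the associated equidistribution of the expanding circles, then yields, for every $\phi\in C_c(\mathcal{M}_1(M))$,
\[ \lim_{t\to\infty}\frac{1}{2\pi}\int_0^{2\pi}\phi(g_t r_\theta\omega)\,d\theta=\int_{\mathcal{M}}\phi\,d\nu_{\mathcal{M}}=:\bar\phi. \]
Writing $\Phi_T(\theta):=\frac1T\int_0^T\phi(g_t r_\theta\omega)\,dt$, integrating the display in $t$ and using Fubini gives $\int_0^{2\pi}\Phi_T(\theta)\,d\theta\to 2\pi\bar\phi$; that is, the time averages already converge to the correct value after averaging over the circle, and the content of the theorem is that no such averaging is needed for a.e.\ $\theta$.

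Conceptually, the upgrade rests on a flow-invariance observation. Let $B\subset\mathcal{M}$ be the set of forward Birkhoff-generic points for $(g_t)$; a change of variables in the time integral shows $x\in B\iff g_s x\in B$ for all $s$, so $B$ is fully $(g_t)$-invariant, and $\nu_{\mathcal{M}}(B)=1$ by Birkhoff's theorem and ergodicity of $\nu_{\mathcal{M}}$. Were a set of directions of positive measure non-generic, flow-invariance would keep their entire forward orbits inside the $\nu_{\mathcal{M}}$-null set $\mathcal{M}\setminus B$, while equidistribution forces a definite fraction of the flowed circle into any fixed neighbourhood of a positive-measure set; morally this is a contradiction. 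The main obstacle is that this dichotomy cannot be run directly: $\mathbf{1}_B$ is not continuous, the equidistribution is only weak-$*$ convergence tested against $C_c$, and $\mathbf{1}_{\mathcal{M}\setminus B}$ is $\nu_{\mathcal{M}}$-null yet topologically large, so it cannot be inserted.

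The way I would make this rigorous is through a second-moment estimate. One writes
\[ \int_0^{2\pi}\big(\Phi_T(\theta)-\bar\phi\big)^2\,d\theta=\frac1{T^2}\int_0^T\!\!\int_0^T\Big(\int_0^{2\pi}\phi(g_{t_1}r_\theta\omega)\,\phi(g_{t_2}r_\theta\omega)\,d\theta\Big)dt_1\,dt_2-(\text{lower order}), \]
and for $t_2>t_1$ rewrites the inner integrand as $\big(\phi\cdot(\phi\circ g_{t_2-t_1})\big)(g_{t_1}r_\theta\omega)$. Applying the circle equidistribution to the continuous function $\phi\cdot(\phi\circ g_s)$, and then using mixing of the Teichmüller flow on $(\mathcal{M},\nu_{\mathcal{M}})$ (which follows from ergodicity of the $SL(2,\R)$-action via the Mautner phenomenon) to send $\int_{\mathcal{M}}\phi\,(\phi\circ g_s)\,d\nu_{\mathcal{M}}\to\bar\phi^2$ as $s\to\infty$, one controls all off-diagonal correlations, the diagonal strip $|t_1-t_2|\le s_0$ contributing only $O(s_0\|\phi\|_\infty^2/T)$. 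This gives $\Phi_T\to\bar\phi$ in $L^2(S^1)$, hence a.e.\ convergence along a subsequence $T_n$; the gaps are filled using the modulus $|\Phi_T-\Phi_{T'}|\le C\|\phi\|_\infty|\log(T/T')|$ together with a maximal-function or Borel--Cantelli argument, which upgrades subsequential to full a.e.\ convergence. Running this for a countable dense family of $\phi$ yields the theorem. The genuinely hard analytic content, equidistribution of expanding circle arcs, is imported wholesale from Eskin--Mirzakhani--Mohammadi; the delicate remaining step is precisely the soft passage from $L^2$ to pointwise convergence, which is the technical core of Chaika and Eskin and where one must be careful that qualitative variance decay alone only gives a subsequence unless a quantitative rate or a maximal inequality is supplied.
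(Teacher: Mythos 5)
First, a point of reference: the paper contains no proof of Theorem~\ref{thm:esch} at all --- it is imported verbatim from Chaika--Eskin \cite{Es-Ch} --- so your attempt has to be measured against their argument. Your two-step skeleton (equidistribution of $g_t$-pushed circles from \cite{EMM}, then an upgrade to pointwise-in-$\theta$ convergence) is indeed the correct frame, and your subsidiary claims are fine: the circle-average limit, the Fubini reduction, and mixing of $(g_t)$ on $(\mathcal{M},\nu_{\mathcal{M}})$ via Howe--Moore/Mautner are all correct. The first genuine gap is in your decorrelation step, which rests on an illegitimate interchange of limits. You apply circle equidistribution to $\psi_s:=\phi\cdot(\phi\circ g_s)$ for \emph{fixed} $s$ and then let $s\to\infty$ by mixing; but in the variance integral the lag $s=|t_2-t_1|$ ranges over all of $[0,T]$, and the EMM equidistribution theorem is non-effective. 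The time $\tau(s)$ after which the circle average of $\psi_s$ is within $\vep$ of $\int\psi_s\,d\nu_{\mathcal{M}}$ depends on $\psi_s$, whose modulus of continuity degrades exponentially in $s$, so $\tau(s)$ is unbounded; the bad region $\{(t_1,t_2)\in[0,T]^2:\min(t_1,t_2)<\tau(|t_2-t_1|)\}$ can then occupy a fixed proportion of the square (already $\tau(s)\geq s$ gives measure $\geq T^2/4$), and the variance bound does not follow. Note also that your mixing input addresses the wrong integral: mixing controls $\int_{\mathcal{M}}\phi\,(\phi\circ g_s)\,d\nu_{\mathcal{M}}$, whereas the quantity to be estimated is a circle integral at finite time $t_1$. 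Chaika--Eskin decorrelate differently: for $t_2>t_1$ they write $g_{t_2}r_\theta\omega=g_{t_2-t_1}(g_{t_1}r_\theta\omega)$ and invoke equidistribution of small \emph{expanding arcs} centered at points of the pushed circle, with a lag depending only on $\vep$ and a compact set, uniformly in $t_1$; that uniformity comes from the isolation/uniform versions of the theorems in \cite{EMM} combined with quantitative nondivergence, not from mixing of $\nu_{\mathcal{M}}$.

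The second gap you flag yourself but do not fill, and it cannot be waved through: with only qualitative variance decay, Chebyshev gives a.e.\ convergence along \emph{some} subsequence, while your interpolation bound $|\Phi_T-\Phi_{T'}|\leq 2\|\phi\|_\infty\log(T'/T)$ requires a.e.\ convergence along every geometric sequence $T_n=(1+\delta)^n$, i.e.\ summability of $\operatorname{Var}(\Phi_{T_n})$; there is no maximal inequality available over $\theta\in S^1$ (the circle is not $g_t$-invariant), and Borel--Cantelli needs a rate that non-effective equidistribution does not supply. This passage from $L^2$ to pointwise is precisely the technical core of \cite{Es-Ch}, where the uniform-in-$t_1$ decorrelation lag is what makes a law-of-large-numbers-type argument for bounded, weakly correlated block averages go through. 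So your proposal is a plausible roadmap that correctly identifies the two ingredients, but it has genuine unfilled gaps at exactly the two steps that constitute the cited proof.
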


All directions $\theta\in S^1$ for which the assertion of the
theorem holds are called \emph{Birkhoff generic}.

Suppose that $V\subset H_1(M,\R)$ is a symplectic subspace (the
symplectic form restricted to $V$ is non-degenerated) of dimension
$2d$. Moreover, assume that $V$ is invariant for $SL(2,\R)$ action
on $\mathcal{M}$. Then $V$ defines a subbundle, denoted by
$\mathcal{V}$, of the bundle $\mathcal{H}_1(M,\R)$ over
$\mathcal{M}$ for which the fibers are identified with $V$.

Let us consider the KZ-cocycle $(G_t^{\mathcal{V}})_{t\in\R}$
restricted to $V$. By Oseledets' theorem, there exists Lyapunov
exponents of $(G_t^{\mathcal{V}})_{t\in\R}$ with respect to the
measure $\nu_{\mathcal{M}}$. As the action of the
Kontsevich-Zorich cocycle is symplectic, its Lyapunov exponents
with respect to the measure $\nu_{\mathcal{M}}$ are:
\[\lambda^{\mathcal{V}}_1\geq\lambda^{\mathcal{V}}_2\geq\ldots\geq\lambda^{\mathcal{V}}_d\geq-
\lambda^{\mathcal{V}}_d\geq\ldots\geq-\lambda^{\mathcal{V}}_2\geq-\lambda^{\mathcal{V}}_1\]

\begin{theorem}[see \cite{Es-Ch}]
Let
$\lambda^{\mathcal{V}}_1=\overline{\lambda}_1>\overline{\lambda}_2>
\ldots>\overline{\lambda}_{s-1}>\overline{\lambda}_s=-\lambda^{\mathcal{V}}_1$
be different Lyapunov exponents of the  Kontsevich-Zorich cocycle
$(G_t^{\mathcal{V}})_{t\in\R}$ with respect to the measure
$\nu_{\mathcal{M}}$. For almost all $\theta\in S^1$ there exists a
direct splitting
$V=\bigoplus_{i=1}^s\mathcal{U}_i(r_\theta\omega)$ such that for
every $\xi\in \mathcal{U}_i(r_\theta\omega)$ we have
\begin{equation}
\lim_{t\to\infty}\frac{1}{t}\log\|\xi\|_{g_tr_\theta\omega}=\overline{\lambda}_i.
\end{equation}
\end{theorem}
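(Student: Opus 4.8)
The plan is to deduce the pointwise Oseledets behaviour along the circle orbit from the classical Oseledets theorem, which holds only $\nu_{\mathcal{M}}$-almost everywhere, by using the equidistribution of Theorem~\ref{thm:esch} as a bridge. The difficulty to overcome at the outset is that a single rotation orbit $\{r_\theta\omega:\theta\in S^1\}$, and a fortiori a single forward trajectory, carries no $\nu_{\mathcal{M}}$-mass, so Oseledets' theorem says nothing about it directly. The whole point of Theorem~\ref{thm:esch} is that for almost every $\theta$ the forward Teichm\"uller orbit $(g_t r_\theta\omega)_{t\geq0}$ visits $\mathcal{M}$ with the correct asymptotic statistics, which is exactly the information one needs to read off the Lyapunov data at that non-typical point.

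First I would record, from the standard Oseledets theorem applied to $(\mathcal{M},\nu_{\mathcal{M}})$, the exponents $\overline{\lambda}_1>\cdots>\overline{\lambda}_s$ and their multiplicities $\dim\mathcal{U}_i$, together with the integral (Kontsevich--Zorich--Forni) representation of the partial sums $\overline{\lambda}_1+\cdots$ of the non-negative exponents as integrals over $(\mathcal{M},\nu_{\mathcal{M}})$ of explicit Hodge-theoretic functions. The key structural remark is that, by Forni's variational formula for the Hodge norm along the Teichm\"uller flow (\cite{For-dev}), the growth $\frac{1}{t}\log\|\xi\|_{g_t\omega'}$ is governed by an \emph{additive} functional — the time integral of a bounded integrand — rather than by a merely subadditive cocycle. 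Carrying this out on each exterior power $\wedge^k\mathcal{V}$ and invoking Theorem~\ref{thm:esch} for the corresponding integrand, I obtain, for almost every $\theta$, convergence of the top logarithmic growth rate on $\wedge^k V$ to the partial sum $\overline{\lambda}_1+\cdots$; differencing consecutive partial sums then separates the distinct values $\overline{\lambda}_i$ and pins down their multiplicities.

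This produces, at $r_\theta\omega$ for a.e. $\theta$, a forward filtration $V=V_1\supset V_2\supset\cdots\supset V_s$, where $V_i$ collects the classes whose forward Hodge-norm growth rate is at most $\overline{\lambda}_i$ and $\dim V_i$ matches the Oseledets dimensions. Applying Theorem~\ref{thm:esch} also to the reversed Teichm\"uller flow, almost every $\theta$ is generic in both time directions simultaneously, so the same argument yields a backward filtration. Since the Kontsevich--Zorich cocycle is symplectic with spectrum symmetric about $0$, the forward and backward filtrations are transverse at such generic points, and intersecting them gives the direct splitting $V=\bigoplus_{i=1}^s\mathcal{U}_i(r_\theta\omega)$ with the exact exponent $\overline{\lambda}_i$ on each summand, as required.

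The main obstacle is precisely this upgrade from the almost-everywhere statements (Oseledets, Kingman) to a single equidistributed orbit: a subadditive cocycle cannot be evaluated along one null trajectory, so the conversion of the multiplicative growth into an additive Birkhoff average via the Hodge-norm variational formula is what makes Theorem~\ref{thm:esch} applicable and is the heart of the matter. Two technical points then require care: first, Theorem~\ref{thm:esch} is stated for $\phi\in C_c(\mathcal{M}_1(M))$ whereas the integrands are bounded but live on a possibly non-compact $\mathcal{M}$, so one must approximate by compactly supported functions and control the contribution near the non-compact ends (the small-systole locus) where the Hodge norm degenerates; second, one must verify that the filtration dimensions coming from the equidistribution argument coincide \emph{exactly} with the Oseledets multiplicities, which forces the exterior-power computation to be run in all degrees at once so as to secure both the upper and the matching lower bounds on each growth rate.
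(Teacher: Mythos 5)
First, note that the paper does not prove this theorem at all: it is quoted from Chaika--Eskin \cite{Es-Ch}, so your attempt has to be measured against that source rather than against an argument in the paper.

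The decisive step in your sketch --- ``carrying this out on each exterior power $\wedge^k\mathcal{V}$ and invoking Theorem~\ref{thm:esch} for the corresponding integrand'' --- does not go through. Forni's variational formula does make $\log\|\xi\|_{g_tr_\theta\omega}$ the time integral of a bounded integrand, but that integrand is a function on the \emph{projectivized Hodge bundle} (for exterior powers, on the Grassmannian bundle): it depends on the direction of the evolved class $\xi_t$, not just on the base point $g_tr_\theta\omega$. Theorem~\ref{thm:esch} equidistributes only the base orbit, for test functions $\phi\in C_c(\mathcal{M}_1(M))$; it gives no control whatsoever over Birkhoff averages of fiber-dependent functions along the lifted orbit, because the fiber direction need not equidistribute with respect to any prescribed measure on the bundle. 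If you try to sidestep the fiber dependence by working with the operator norm $\log\|\Lambda^k G_t^{\mathcal{V}}\|$, you recover a quantity depending only on the base point but you lose additivity: it is merely subadditive, and base equidistribution does not force convergence of subadditive quantities along a single null orbit --- by Furman-type examples this can fail even for a uniquely ergodic base and a continuous cocycle. So the ``heart of the matter'' you identify (additivity via the Hodge-norm variational formula) does not in fact reduce Oseledets genericity to Theorem~\ref{thm:esch}. Upgrading base-point Birkhoff genericity to genericity \emph{for the cocycle} is precisely the content of the Chaika--Eskin theorem, and their proof is not a formal corollary of equidistribution: it is a separate random-walk/martingale argument in the spirit of Benoist--Quint, in which Forni's bound $\bigl|\frac{d}{dt}\log\|\xi\|_{g_t\omega'}\bigr|\leq 1$ enters only as a Lipschitz/integrability input.

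Two further steps are asserted rather than proved. Applying Theorem~\ref{thm:esch} ``to the reversed Teichm\"uller flow'' needs at least the remark that $g_{-t}=r_{\pi/2}\,g_t\,r_{-\pi/2}$, so that backward genericity of $r_\theta\omega$ is forward genericity of $r_{\theta+\pi/2}\omega$ and a full-measure set of $\theta$ works in both directions. More seriously, even granting forward and backward filtrations with the correct dimensions, their pairwise transversality at the specific point $r_\theta\omega$ is not automatic from the symmetry of the symplectic spectrum: at a point carrying no $\nu_{\mathcal{M}}$-mass you cannot invoke the two-sided Oseledets theorem to put the two flags in general position, which is exactly the difficulty your argument set out to avoid. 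The splitting $V=\bigoplus_{i=1}^s\mathcal{U}_i(r_\theta\omega)$ therefore does not follow from the two filtrations without an additional argument.
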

All directions $\theta\in S^1$ for which the assertion of the
theorem holds are called \emph{Oseledets generic}.
Then $V$ has a  direct splitting
\[ V=E_{r_\theta\omega}^+\oplus E_{r_\theta\omega}^0\oplus E_{r_\theta\omega}^-\]
into unstable, central and stable subspaces
\begin{align*}
E_{r_\theta\omega}^+&=\Big\{\xi\in
V:\lim_{t\to+\infty}\frac{1}{t}\log\|\xi\|_{g_{-t}r_\theta\omega}<0\Big\},
\label{stabledef}\\
E_{r_\theta\omega}^0&=\Big\{\xi\in
V:\lim_{t\to\infty}\frac{1}{t}\log\|\xi\|_{g_{t}r_\theta\omega}=0\Big\},\nonumber
\\
E_{r_\theta\omega}^-&=\Big\{\xi\in
V:\lim_{t\to+\infty}\frac{1}{t}\log\|\xi\|_{g_{t}r_\theta\omega}<0\Big\}.\nonumber
\end{align*}
The dimension of the stable and the unstable subspace is equal to the
number of positive Lyapunov exponents of
$(G^{\mathcal{V}}_t)_{t\in\R}$.

\begin{theorem}[see \cite{KMS}]\label{thm:KMS}
For every Abelian differential $\omega$ on a compact connected surface $M$ for almost all
directions $\theta\in S^1$ the directional flows
$(\varphi^{v}_t)_{t\in\R}$ and $(\varphi^{h}_t)_{t\in\R}$ on $(M,r_\theta\omega)$ are uniquely ergodic.
\end{theorem}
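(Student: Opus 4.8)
The plan is to deduce unique ergodicity from recurrence of the Teichm\"uller geodesic via \emph{Masur's criterion}, which asserts that if the forward orbit $(g_t r_\theta\omega)_{t\ge0}$ does not diverge in $\mathcal{M}_1(M)$ --- i.e.\ there is a compact set $K\subset\mathcal{M}_1(M)$ and an unbounded set of times at which the orbit lies in $K$ --- then the vertical flow $(\varphi^{v}_t)_{t\in\R}$ on $(M,r_\theta\omega)$ is uniquely ergodic. Granting this criterion, the theorem reduces to two points: showing that for almost every $\theta\in S^1$ the geodesic $(g_t r_\theta\omega)_{t\ge0}$ is non-divergent, and then transferring the conclusion from the vertical to the horizontal flow.

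First I would establish non-divergence for almost every direction using Theorem~\ref{thm:esch}. By inner regularity of the probability measure $\nu_{\mathcal{M}}$ on $\mathcal{M}=\overline{SL(2,\R)\omega}$, choose a compact set $K\subset\mathcal{M}_1(M)$ with $\nu_{\mathcal{M}}(K)>0$, and pick $\phi\in C_c(\mathcal{M}_1(M))$ with $0\le\phi\le1$ and $\int_{\mathcal{M}}\phi\,d\nu_{\mathcal{M}}=:c>0$. For every Birkhoff generic $\theta$ the equidistribution \eqref{eq:birk} gives $\tfrac1T\int_0^T\phi(g_t r_\theta\omega)\,dt\to c>0$, so the Lebesgue measure of times $t\in[0,T]$ with $g_t r_\theta\omega\in\supp\phi$ grows like $cT$ and in particular is unbounded. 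Thus the orbit meets the compact set $\supp\phi$ along an unbounded set of times and is non-divergent. Since almost every $\theta$ is Birkhoff generic, non-divergence holds for a.e.\ $\theta\in S^1$.

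Masur's criterion then yields unique ergodicity of $(\varphi^{v}_t)_{t\in\R}$ on $(M,r_\theta\omega)$ for a.e.\ $\theta$. For the horizontal flow I would note that $(\varphi^{h}_t)_{t\in\R}$ on $(M,r_\theta\omega)$ is conjugate to $(\varphi^{v}_t)_{t\in\R}$ on $(M,r_{\theta-\pi/2}\omega)$, since multiplying the form by $e^{-i\pi/2}$ rotates the horizontal direction onto the vertical one. As the map $\theta\mapsto\theta-\pi/2$ preserves Lebesgue measure on $S^1$, the set of $\theta$ for which the horizontal flow is uniquely ergodic is also of full measure; intersecting the two full-measure sets proves the assertion for both flows simultaneously.

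The main obstacle is Masur's criterion itself: the genuinely deep input is the link between recurrence of the Teichm\"uller geodesic and unique ergodicity of the associated vertical foliation, which rests on controlling the Hodge norm (equivalently, the length of the shortest saddle connection) along the geodesic, and which I would cite rather than reprove. A second, more technical point is that recurrence must be obtained along the specific circle $\{r_\theta\omega:\theta\in S^1\}$ rather than for a.e.\ point of the ambient invariant measure; this is exactly the content supplied by Theorem~\ref{thm:esch}, and without such an equidistribution statement (or the original measure-estimate argument of Kerckhoff--Masur--Smillie, \cite{KMS}) a bare appeal to ergodicity of the Teichm\"uller flow would not suffice.
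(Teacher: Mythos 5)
Your proposal is correct modulo the one black box you acknowledge (Masur's criterion), but note first that the paper itself contains no proof of this statement: Theorem~\ref{thm:KMS} is quoted verbatim from \cite{KMS}, so any comparison is with the Kerckhoff--Masur--Smillie argument rather than with text in this paper. Your route differs from theirs in where the real work is placed. You obtain non-divergence of $(g_tr_\theta\omega)_{t\geq 0}$ for a.e.\ $\theta$ from the Chaika--Eskin equidistribution (Theorem~\ref{thm:esch}), and that step is carried out correctly: fixing a single $\phi\in C_c(\mathcal{M}_1(M))$ with $\int\phi\,d\nu_{\mathcal{M}}>0$ suffices, and linear growth of the time spent in $\supp\phi$ gives recurrence to a fixed compact set. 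Two small points you leave implicit are harmless: minimality of the vertical foliation (a hypothesis in some formulations of Masur's criterion) is automatic here, since a vertical saddle connection has length decaying like $e^{-t}$ along the geodesic, which would force divergence and contradict your recurrence; and the sign in the rotation conjugating the horizontal flow to a vertical one ($\theta\mapsto\theta\pm\pi/2$) is convention-dependent but irrelevant since either rotation preserves Lebesgue measure on $S^1$. The contrast with \cite{KMS} is that they establish non-divergence for a.e.\ direction by direct, elementary geometric measure estimates (controlling the set of directions in which a fixed short curve stays short), with no input beyond the geometry of the surface; your argument instead imports the far heavier 2015 machinery behind Theorem~\ref{thm:esch} (resting on Eskin--Mirzakhani--Mohammadi) to prove a 1986 theorem. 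Within this paper's framework, where Theorem~\ref{thm:esch} is already assumed, that is a legitimate and efficient shortcut, but as a freestanding proof it is anachronistic, and you should verify that no ingredient in the Chaika--Eskin/EMM chain invokes \cite{KMS}, to rule out circularity; the original KMS estimates carry no such risk.
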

Every  $\theta\in S^1$ for which the assertion of the
theorem holds is called \emph{Masur generic}.

\smallskip

Suppose that the flow $({\varphi}^\theta_t)_{t\in\R}$ on
$(M,\omega)$ is uniquely ergodic. Denote by $M^+_\theta$ the set
of points $x\in M$ such that  the positive semi-orbit
$(\varphi^\theta_t(x))_{t\geq 0}$ on $(M,\omega)$ is well defined.
For every $x\in M^+_\theta$ and $t>0$ denote by
$\sigma^{\theta}_t(x)$ an element of $H_1(M,\Z)$ which is the
class of a loop formed by the orbit segment of the orbit of $x$ in
direction $\theta$ from $x$ to $\varphi^\theta_t(x)$ closing by
the shortest curve joining $\varphi^\theta_t(x)$ with $x$.

The following two results are closely related to Theorem~2 in
\cite{DHL} and Theorem~4.2 and Lemma~6.3 in \cite{Fr-Ulc:nonerg}.
For completeness of exposition we include their proofs in
Appendix~\ref{app}.
\begin{theorem}
\label{cohthm} Let $\omega\in\mathcal{M}_1(M)$. Suppose that
$\pi/2-\theta\in S^1$ is Birkhoff, Oseledets and Masur (BOM) generic. If $\gamma\in
E_{r_{\pi/2-\theta}\omega}^-$ then there exists $C>0$ such that $|\langle\sigma^{\theta}_t(x),
\gamma\rangle|\leq C$ for all $x\in M^+_\theta$ and $t>0$.  If additionally $\gamma\in
E_{r_{\pi/2-\theta}\omega}^-$ is non-zero and
there exists an orthogonal symplectic splitting $H_1(M,\Q)=K\oplus
K^{\perp}$ such that $V=\R\otimes_{Q}K$ and  $(\xi_i)_{i=1}^{2d}$
is a basis of $K$ then
$(\langle\gamma,\xi_i\rangle)_{i=1}^{2d}\notin \R \cdot \Q^{2d}$.
\end{theorem}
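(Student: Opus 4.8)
The plan is to treat the two assertions separately: the bound $|\langle\sigma^\theta_t(x),\gamma\rangle|\le C$ will come from a renormalization estimate controlling intersection numbers by the Hodge norm along the Teichm\"uller orbit, and the rationality statement will follow by combining this estimate with recurrence of the Teichm\"uller flow.

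Set $\omega':=r_{\pi/2-\theta}\omega$, so that the directional flow $(\varphi^\theta_t)_{t\in\R}$ on $(M,\omega)$ is the vertical flow on $(M,\omega')$ and the geodesic $(g_s\omega')_{s\ge0}$ renormalizes it. The heart of the first claim is the geometric estimate that there is a constant $C_0>0$, depending only on $\omega'$, such that for every $\zeta\in H_1(M,\R)$, every $x\in M^+_\theta$ and every $t\ge 1$,
\[ |\langle\sigma^\theta_t(x),\zeta\rangle|\le C_0\Big(\|\zeta\|_{\omega'}+\int_0^{\log t}\|\zeta\|_{g_s\omega'}\,ds\Big); \]
for $t\le 1$ the cycle $\sigma^\theta_t(x)$ stays in a compact family and the left-hand side is bounded by continuity. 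To prove the displayed estimate I would cut the orbit segment into pieces that renormalize to unit size and code it by the return cycles $\xi_\alpha$ to a transversal; pushing a piece forward by $g_s$ keeps its geometry bounded (Masur genericity guarantees unique ergodicity of the vertical flow, so the renormalized return cycles stay of bounded Hodge norm along the Birkhoff-generic orbit), and its algebraic intersection with $\zeta$ at renormalization time $s$ is then $O(\|\zeta\|_{g_s\omega'})$; summing over $s\in[0,\log t]$ gives the integral. This is the step that is closely related to Theorem~2 of \cite{DHL} and Theorem~4.2 of \cite{Fr-Ulc:nonerg}, and making the renormalization uniform in $x$ during excursions of $g_s\omega'$ toward the boundary of the stratum is the main obstacle.

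Granting the estimate, the first assertion is immediate. Since $\pi/2-\theta$ is Oseledets generic and $\gamma\in E^-_{\omega'}$, the space $E^-_{\omega'}$ is a sum of Oseledets subspaces with strictly negative exponents, so $\|\gamma\|_{g_s\omega'}$ decays exponentially in $s$; hence $\int_0^\infty\|\gamma\|_{g_s\omega'}\,ds<\infty$ and the right-hand side of the estimate is bounded uniformly in $t$ and $x$. This produces the required constant $C$.

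For the second assertion I argue by contradiction. Suppose $(\langle\gamma,\xi_i\rangle)_{i=1}^{2d}=\lambda q$ with $q\in\Q^{2d}$ and $\lambda\in\R$. Because the intersection form is non-degenerate on $V=\R\otimes_\Q K$ and $\gamma\in V$ is non-zero, the pairing vector is non-zero, so $\lambda\ne0$ and $q\ne0$. The Gram matrix $(\langle\xi_j,\xi_i\rangle)$ is rational and invertible, so the unique $\eta\in V$ with pairing vector $q$ lies in $K$; by injectivity of $\zeta\mapsto(\langle\zeta,\xi_i\rangle)_i$ we get $\gamma=\lambda\eta$. Clearing denominators yields a non-zero integral class $\eta_0\in H_1(M,\Z)\cap V$ with $\eta_0\in\R\cdot\gamma\subset E^-_{\omega'}$, whence $\|\eta_0\|_{g_s\omega'}\to0$ as $s\to\infty$. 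On the other hand, lifting to Teichm\"uller space and using the $\Gamma(M)$-equivariance of the Hodge norm, $\|\eta_0\|_{g_s\omega'}$ equals the Hodge norm at $g_s\omega'$ of a non-zero integral class, which is bounded below by the first minimum of the lattice $H_1(M,\Z)$ for the Hodge norm. This first minimum is a positive continuous function on $\mathcal{M}_1(M)$, hence bounded below by some $\delta>0$ on any fixed compact set $\mathcal{K}$. By Birkhoff genericity the orbit $g_s\omega'$ returns to such a $\mathcal{K}$ along a sequence $s_n\to\infty$, giving $\|\eta_0\|_{g_{s_n}\omega'}\ge\delta$ and contradicting the decay. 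Thus no such $\lambda,q$ exist, i.e.\ $(\langle\gamma,\xi_i\rangle)_{i=1}^{2d}\notin\R\cdot\Q^{2d}$. The first part carries all the analytic difficulty; the second is comparatively soft once recurrence and the lattice lower bound are in place.
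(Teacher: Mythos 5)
Your first assertion rests entirely on the displayed integral estimate, and that estimate is exactly what you never prove: you yourself flag uniformity during excursions of $g_s\omega'$ into the thin part as ``the main obstacle'' and then grant the bound. The sketch you offer does not close this: Masur genericity (unique ergodicity of the vertical flow) gives no bounded geometry along the Teichm\"uller orbit, and the two-sided comparison between the Hodge norm and intersections with the renormalized return cycles (the analogue of \eqref{baseuniform}) holds with uniform constants only while $g_s\omega'$ lies in a fixed compact set $\mathcal{K}$; the constants blow up in the thin part, so ``pushing a piece forward by $g_s$ keeps its geometry bounded'' fails precisely where it is needed. Note moreover that your integral bound is \emph{strictly stronger} than what the paper actually establishes: the paper decomposes the orbit only at the return times $t_k$ of $g_s\omega'$ to $\mathcal{K}$ (Lemma~\ref{lem:Birk}) and, in Lemma~\ref{lem:twoentr}, pays a factor $e^{t_{k+1}-t_k}$ for each inter-return block, arriving at the bound \eqref{eq:sumsum}, namely $|\langle \sigma^v_t(p),\gamma\rangle|\leq 2\sum_{k} c\,C^2 e^{t_{k+1}-t_k}\|\gamma\|_{g_{t_k}\omega}+C'\|\gamma\|_{\omega}$; when the excursion gaps $t_{k+1}-t_k$ are unbounded, this sum is not dominated by $\int_0^{\infty}\|\gamma\|_{g_s\omega'}\,ds$, so one cannot recover your estimate from the paper's, nor conversely deduce the paper's conclusion from a bound you have not proved. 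Convergence in the paper is rescued not by uniform geometry but by the \emph{quantitative} Birkhoff input $t_k/k\to 2r/\nu_{\mathcal{M}}(\widetilde{\mathcal{K}})>0$, hence $(t_{k+1}-t_k)/t_k\to 0$, which together with the Oseledets decay $\|\gamma\|_{g_{t_k}\omega}\leq C_1e^{-\theta t_k}$ makes $e^{t_{k+1}-t_k}e^{-\theta t_k}$ summable. In your proposal Birkhoff genericity is used only softly (returns to a compact set); the linear growth of return times, which is what defeats the excursions, never enters. So the analytic heart of the first (and main) assertion is missing --- it is closely related to Theorem~2 of \cite{DHL}, but you neither quote a precise statement covering your setting nor prove it.

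Your proof of the second assertion, by contrast, is correct and is a clean alternative to the paper's. You reduce, via the rational invertible Gram matrix, to a nonzero integral class $\eta_0\in E^-_{r_{\pi/2-\theta}\omega}\cap H_1(M,\Z)$, and you contradict the Hodge-norm decay with the lower bound given by the first minimum of the lattice $H_1(M,\Z)$ --- correctly invoking $\Gamma(M)$-equivariance so that parallel transport along the orbit still yields nonzero integral classes --- evaluated along Birkhoff returns to a compact set. The paper instead keeps $\gamma$ itself, uses the comparison \eqref{baseuniform} with the return cycles $\xi_\alpha(g_{t_k}\omega)$, and observes that the pairings $\langle\xi_\alpha(g_{t_k}\omega),\gamma\rangle$ lie in a discrete subgroup of $\R$, forcing them (and then $\|\gamma\|_{g_{t_k}\omega}$) to vanish. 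Both arguments rest on the same two ingredients, returns to a compact set and discreteness of integral pairings; yours has the mild advantage of avoiding the transversal and the return cycles entirely, but it does not compensate for the unproved estimate on which the first assertion depends.
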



\begin{theorem}\label{cohcor}
Let $\omega\in\mathcal{M}_1(M)$. Suppose that
$\pi/2-\theta\in S^1$ is BOM generic. Then there exists $I\subset M\setminus\Sigma$  an
interval transversal to the direction $\theta$ with no
self-intersections such that the Poincar\'e return map $T:I\to I$ is a
minimal ergodic IET and
if $\gamma\in E_{r_{\pi/2-\theta}\omega}^-$ then the corresponding cocycle
$\psi_{\gamma,I}:I\to\R$ for $T$ is a coboundary.
\end{theorem}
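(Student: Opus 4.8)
The plan is to show that $\psi_{\gamma,I}$ has uniformly bounded Birkhoff sums and then to invoke the principle that a real cocycle with bounded Birkhoff sums over an ergodic transformation is a coboundary. First I would fix the transversal. Since $\pi/2-\theta$ is BOM generic it is in particular Masur generic, so the direction-$\theta$ flow $(\varphi^\theta_t)_{t\in\R}$ on $(M,\omega)$ — which is the vertical flow of $r_{\pi/2-\theta}\omega$ — is uniquely ergodic, hence minimal. I would then take any interval $I\subset M\setminus\Sigma$ transversal to the direction $\theta$ whose endpoints are chosen so that the first return map $T\colon I\to I$ is a genuine interval exchange; minimality and unique ergodicity of the flow pass to $T$, making it a minimal ergodic IET.

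Next I would compare the Birkhoff sums of $\psi_{\gamma,I}$ with the homology cocycle $\sigma^\theta$. Writing $\alpha(y)$ for the index with $y\in I_{\alpha(y)}$, equations \eqref{eq:psigamma} and \eqref{cocycledef} give, for $n>0$,
\[
\psi_{\gamma,I}^{(n)}(x)=\sum_{k=0}^{n-1}\langle\gamma,\xi_{\alpha(T^k x)}\rangle=\Big\langle\gamma,\sum_{k=0}^{n-1}\xi_{\alpha(T^k x)}\Big\rangle .
\]
The concatenation of the loops defining the $\xi_{\alpha(T^kx)}$ has orbit part equal to the $\theta$-orbit segment from $x$ to $T^nx=\varphi^\theta_t(x)$ (with $t$ the $n$-th return time), closed up by segments lying inside $I$. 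Hence the cycle $\sum_{k=0}^{n-1}\xi_{\alpha(T^kx)}$ differs from $\sigma^\theta_t(x)$ only by a cycle contained in a fixed bounded neighbourhood of $\overline I$ and of uniformly bounded length, whose algebraic intersection with the fixed class $\gamma$ is therefore bounded by a constant $C_0=C_0(\gamma,I)$. Thus $|\psi_{\gamma,I}^{(n)}(x)-\langle\gamma,\sigma^\theta_t(x)\rangle|\le C_0$ for all $x\in M^+_\theta$ and $n>0$.

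Now comes the key input. Because $\gamma\in E^-_{r_{\pi/2-\theta}\omega}$, Theorem~\ref{cohthm} supplies $C>0$ with $|\langle\sigma^\theta_t(x),\gamma\rangle|\le C$ for all $x\in M^+_\theta$, $t>0$. Combined with the previous estimate this gives a uniform bound $|\psi_{\gamma,I}^{(n)}(x)|\le C+C_0=:C'$ for every $n>0$ and a.e.\ $x$, and the cocycle identity \eqref{cocycledef} yields the same bound for $n<0$. It remains to conclude that such a boundedly-summing cocycle is a coboundary. Here $\psi_{\gamma,I}$ is piecewise constant, so $\psi_{\gamma,I}\in L^2(I)$, and its partial sums $f_n:=\psi_{\gamma,I}^{(n)}$ satisfy $\sup_n\|f_n\|_\infty\le C'$ together with $f_{n+1}=\psi_{\gamma,I}+f_n\circ T$. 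I would form the Cesàro averages $g_N:=\frac1N\sum_{n=0}^{N-1}f_n$, which are bounded in $L^\infty\subset L^2$, and extract a weakly convergent subsequence $g_{N_j}\rightharpoonup g$ in $L^2$. Since the Koopman operator of $T$ is weakly continuous and $g_N\circ T=g_N+\frac1N(f_N-f_0)-\psi_{\gamma,I}$ with $\frac1N(f_N-f_0)\to0$ in $L^2$, passing to the weak limit gives $g\circ T=g-\psi_{\gamma,I}$, that is $\psi_{\gamma,I}=g-g\circ T$. Thus $\psi_{\gamma,I}$ is a coboundary.

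The main obstacle I anticipate is the geometric bookkeeping of the second step: one must verify that the discrepancy between $\sum_k\xi_{\alpha(T^kx)}$ and $\sigma^\theta_t(x)$ is bounded uniformly in both $x$ and $n$ (not merely along a fixed orbit), and simultaneously arrange $I$ so that $T$ is a bona fide minimal ergodic IET while keeping this error controlled — this is precisely where the interval structure of $I$ and the definition of $\sigma^\theta$ enter. Once the uniform bound on $\psi_{\gamma,I}^{(n)}$ is secured, the coboundary conclusion is the routine weak-limit argument above.
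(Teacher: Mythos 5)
Your proof is correct and follows essentially the same route as the paper's: both derive a uniform bound on the Birkhoff sums $\psi_{\gamma,I}^{(n)}$ from the estimate $|\langle\sigma^{\theta}_t(x),\gamma\rangle|\leq C$ of Theorem~\ref{cohthm} and then conclude that a cocycle with uniformly bounded sums over an ergodic base is a coboundary. The only differences are expository: the paper takes the specific interval $I=I_{g_{t_0}\omega}(p_0)$ from the appendix construction, for which $\psi_\gamma^{(n)}(x)=\langle\gamma,\sigma^v_{s_n}(x)\rangle$ holds exactly (so your bounded-discrepancy constant $C_0$ is not needed), and it leaves implicit the standard bounded-sums-imply-coboundary step that you carry out via Ces\`aro averages and a weak $L^2$ limit.
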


\subsection{Lyapunov exponents for quadratic differentials and Eskin-Kontsevich-Zorich formula}
Let $(X,q)$ be a half-translation compact connected surface. Let $(M,\omega)$ be its canonical double cover.
Then $(M,\omega)$ is a compact translation surface for which there exists a holomorphic map  $\varrho:M\to X$ and
a holomorphic involution $I:X\to X$ such that
\[\varrho^*(q)=\omega^2,\quad \varrho\circ I=\varrho\ \text{ and }\ I^*(\omega)=-\omega.\]
%
The space $H_1(M,\R)$ has an orthogonal (symplectic) splitting into
\[H^+_1\!(M,\R)=\{\xi\in H_1(M,\R)\!:\!I_*\xi=\xi\}\text{ and }H^-_1\!(M,\R)=\{\xi\in H_1(M,\R)\!:\!I_*\xi=-\xi\}.\]
Moreover, $H^+_1(M,\R)$
is canonically isomorphic to $H_1(X,\R)$ via the map
$\varrho_*:H^+_1(M,\R)\to H_1(X,\R)$, we will identify
both spaces.

Let $\mathcal{M}=\overline{SL(2,\R)\omega}$ and let $\nu_{\mathcal{M}}$ be the corresponding affine
$SL(2,\R)$-invariant probability measure supported on $\mathcal{M}$.
The space $H^+_1(M,\R)$ defines a subbundle $\mathcal{H}^+_1(M,\R)$ over $\mathcal{M}$ and the Lyapunov
exponents of the Kontsevich-Zorich cocycle on this bundle are called Lyapunov exponents of $(X,q)$.

Let $\check{g}\geq 0$ be the genus of $X$ and let
$g_e:=g-\check{g}$. Then $2\check{g}=\dim H^+_1(M,\R)$ and
$2{g}_e=\dim H^-_1(M,\R)$. Both symplectic subspaces define
subbundles $\mathcal{H}^+_1(M,\R)$ and $\mathcal{H}^-_1(M,\R)$
respectively over $\mathcal{M}$. We will denote by
\[\lambda^{+}_1,\ldots,\lambda^+_{\check{g}},-\lambda^+_{\check{g}},\ldots,-\lambda^+_{1}\quad\text{ and }
\quad\lambda^{-}_1,\ldots,\lambda^-_{g_e},-\lambda^-_{g_e},\ldots,-\lambda^-_{1}\]
their Lyapunov exponents respectively.

Denote by $\mathcal{Q}(d_1,\ldots, d_n)$ the stratum of quadratic
differentials with singularities of angles $(2 + d_1)\pi, \ldots ,
(2 + d_n)\pi$ which are not squares of  Abelian differentials.
Recall that if $(X,q)\in \mathcal{Q}(d_1,\ldots, d_n)$ then
$d_1+\ldots+d_n=4\check{g}-4$.

\begin{theorem}[Eskin-Kontsevich-Zorich formula, see
\cite{EKZ2}]\label{thm:ekz}
If $(X,q)\in \mathcal{Q}(d_1,\ldots,
d_n)$ then
\begin{equation}\label{EKZformula}
\big(\lambda^{-}_1+\ldots+\lambda^-_{g_e}\big)-\big(\lambda^{+}_1+\ldots+\lambda^+_{\check{g}}\big)=\frac{1}{4}\sum_{j\text{
with }d_j\text{ odd}}\frac{1}{2+d_j}.
\end{equation}
\end{theorem}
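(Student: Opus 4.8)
The plan is to realize each sum $\kappa^{\pm}:=\lambda^{\pm}_1+\ldots+\lambda^{\pm}$ as a geometric average over $(\mathcal{M},\nu_{\mathcal{M}})$ and to reduce the identity \eqref{EKZformula} to a local computation at the singularities. First I would invoke the Forni–Kontsevich description of the sum of the nonnegative Lyapunov exponents of a symplectic piece of the Kontsevich–Zorich cocycle: for an $SL(2,\R)$-invariant symplectic subbundle $\mathcal{W}$ one has
\[
\lambda^{\mathcal{W}}_1+\ldots+\lambda^{\mathcal{W}}_{\dim \mathcal{W}/2}=\int_{\mathcal{M}}\Lambda_{\mathcal{W}}(\omega)\,d\nu_{\mathcal{M}}(\omega),
\]
where $\Lambda_{\mathcal{W}}$ is the trace of the second fundamental form of the Hodge norm restricted to $\mathcal{W}$. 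Applying this to $\mathcal{W}=\mathcal{H}^{+}_1(M,\R)$ and to $\mathcal{W}=\mathcal{H}^{-}_1(M,\R)$ expresses $\kappa^{+}$ and $\kappa^{-}$ as integrals of $\Lambda^{+}$ and $\Lambda^{-}$, so that
\[
\kappa^{-}-\kappa^{+}=\int_{\mathcal{M}}\big(\Lambda^{-}(\omega)-\Lambda^{+}(\omega)\big)\,d\nu_{\mathcal{M}}(\omega).
\]

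The second step is to recognize $\Lambda^{\pm}$ as (up to a universal constant) the hyperbolic Laplacian of the logarithm of the Hodge determinant on $\mathcal{H}^{\pm}$ along the $SL(2,\R)$-orbits, and to run the Eskin–Kontsevich–Zorich integration-by-parts over the non-compact moduli space. By Stokes' theorem the bulk contributions cancel and the integral localizes at the cusps of $\mathcal{M}$, i.e.\ at the degenerations where either a saddle connection or the core curve of a flat cylinder is pinched. This splits each $\int \Lambda^{\pm}$ into a \emph{Siegel–Veech} contribution, proportional to the area Siegel–Veech constant $c_{\mathrm{area}}(\mathcal{M})$ and coming from the pinched cylinders, together with purely \emph{local} contributions coming from the conical singularities.

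The crucial simplification is that the two Siegel–Veech terms coincide and therefore cancel in the difference $\kappa^{-}-\kappa^{+}$. Indeed a degenerating cylinder on $(M,\omega)$ projects to a degenerating cylinder on $(X,q)$, and the induced weight on $\mathcal{H}^{+}$ and on $\mathcal{H}^{-}$ is the same, so only the contributions of the singularities survive; this is exactly why the right-hand side of \eqref{EKZformula} is a finite rational number carrying no $c_{\mathrm{area}}$ term. To evaluate the surviving local terms I would use the ramification behaviour of the double cover $\varrho:M\to X$ recorded in Section~\ref{sect:covers}: a singularity of $q$ of even order $d_j$ has two preimages and contributes symmetrically to $\Lambda^{+}$ and $\Lambda^{-}$, so it drops out of the difference, whereas a singularity of odd order $d_j$ (cone angle an odd multiple of $\pi$) is a ramification point with a single preimage, a zero of $\omega$ of order $d_j+1$, and contributes asymmetrically. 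A local model computation of the Hodge-norm asymptotics at such a ramification point then produces precisely the weight $\tfrac14\cdot\tfrac{1}{2+d_j}$, and summing over the odd singularities yields \eqref{EKZformula}.

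The main obstacle is the analytic heart of the argument, namely the rigorous integration-by-parts of the second step on the non-compact $\mathcal{M}$: one must control the degeneration of the Hodge norm near every type of cusp and justify that the boundary terms are given exactly by the Siegel–Veech and local contributions described above. Granting the cancellation of the Siegel–Veech constant — the genuinely decisive input that makes the difference combinatorial — the remaining work is the explicit local asymptotic expansion at an odd-order ramification point, which is a direct, if delicate, computation with the flat metric $|\omega|$ near a higher-order zero.
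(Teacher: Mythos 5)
You should first note that the paper contains no proof of Theorem~\ref{thm:ekz} at all: it is imported verbatim from \cite{EKZ2} as a black box, so there is no internal argument to compare against, and the relevant benchmark is the proof in \cite{EKZ2} itself. Measured against that, your outline reproduces the correct global strategy: the Forni--Kontsevich formula expressing the sum of nonnegative exponents of the invariant symplectic subbundles $\mathcal{H}^{+}_1(M,\R)$ and $\mathcal{H}^{-}_1(M,\R)$ as integrals over $(\mathcal{M},\nu_{\mathcal{M}})$, an analytic Riemann--Roch/integration-by-parts that splits each integral into a Siegel--Veech term plus local terms at the conical points, cancellation of the Siegel--Veech terms in the difference, and the ramification combinatorics of the double cover $\varrho:M\to X$ (even $d_j$: two unramified preimages, zeros of $\omega$ of order $d_j/2$, contributing symmetrically; odd $d_j$: one ramification point, a zero of order $d_j+1$, with the pole case $d_j=-1$ lifting to a regular point). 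This is faithful to \cite{EKZ2} in outline.

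As a proof, however, two decisive steps are asserted rather than established, and both are genuine gaps. First, the analytic core you explicitly defer --- validity of the Forni-type formula for the subbundles and the rigorous localization with controlled boundary behaviour of the Hodge norm at every cusp of the non-compact $\mathcal{M}$ --- is precisely the bulk of \cite{EKZ2}; granting it, nothing is left to prove except arithmetic, so the proposal is a roadmap, not a proof. Second, and more substantively, the cancellation of the Siegel--Veech contributions is not the one-line lifting argument you give: the two terms are $\frac{\pi^2}{3}c_{\mathrm{area}}$ constants of \emph{different} families (the quadratic locus and its orienting double cover, whose areas are normalized differently), and a cylinder of $(X,q)$ lifts either to two isometric cylinders or to a single cylinder of doubled circumference according to the holonomy along its core curve; the resulting comparison of Siegel--Veech constants is a separate lemma in \cite{EKZ2}, not a formal consequence of ``the induced weight on $\mathcal{H}^{+}$ and $\mathcal{H}^{-}$ is the same.'' Finally, the ``delicate local asymptotic expansion'' you postpone is, in the actual argument, unnecessary: once the two main formulas are granted, the right-hand side falls out by subtracting explicit local terms --- $\frac{1}{12}\sum m(m+2)/(m+1)$ over the zeros of $\omega$ upstairs versus $\frac{1}{24}\sum_j d_j(d_j+4)/(d_j+2)$ downstairs --- so that each odd $d_j$ contributes $\frac{(d_j+1)(d_j+3)-d_j(d_j+4)}{12(d_j+2)}=\frac{1}{4(2+d_j)}$, pure arithmetic rather than a Hodge-norm computation at the ramification point.
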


\section{Non-ergodicity}\label{sec:nonerggen}
In this section we show how positivity of Lyapunov exponents affects the ergodic properties
of directional flows on $\Z^d$-covers of a translation surface $(M,\omega)$. Let $\mathcal{M}=\overline{SL(2,\R)\omega}$ and let $\nu_{\mathcal{M}}$ be the corresponding affine
$SL(2,\R)$-invariant probability measure supported on $\mathcal{M}$.
Assume that
\[ H_1 (M,\mathbb{Q}) = K \oplus K^\perp\]
is an orthogonal splitting (with respect to the symplectic
intersection form) such that $\dim_{\Q}{K}=2d\geq 2$. Suppose that
the subspace $V:=\R\otimes_{\Q}{K}\subset H_1(M,\R)$ defines an
$SL(2,\R)$-invariant subbundle $\mathcal{V}$ of the homological
bundle over $\mathcal{M}$. Suppose that  the number $d_+$ of positive
Lyapunov exponents for $(G^{\mathcal{V}}_t)_{t\in\R}$ is positive. Then the number of non-positive exponents
$d_-:=2d-d_+$ is less than $2d=\dim_{\Q}{K}$.

\begin{theorem}\label{non-ergodicitycriterion}
Let $\omega\in\mathcal{M}_1(M)$ and let $\pi/2-\theta\in S^1$
be BOM generic. Then, for  any non-degenerated
$\mathbb{Z}^{d_-}$-cover $(\widetilde{M}_\gamma,
\widetilde{\omega}_\gamma)$ of $(M, \omega)$ given by  $\gamma \in
(K\cap H_1(M,\Z))^{d_-}$, the directional flow
$(\widetilde{\varphi}^\theta_t)_{t\in \R}$ on
$(\widetilde{M}_\gamma, \widetilde{\omega}_\gamma)$ is not
ergodic. In particular, for a.e.\ $\theta\in S^1$
$(\widetilde{\varphi}^\theta_t)_{t\in \R}$ on
$(\widetilde{M}_\gamma, \widetilde{\omega}_\gamma)$ is not
ergodic.
\end{theorem}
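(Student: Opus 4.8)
The plan is to reduce the assertion to the non-ergodicity of a skew product over an interval exchange transformation, and then to produce an irrational linear relation among the coordinates of the associated cocycle using the stable subspace $E^-_{r_{\pi/2-\theta}\omega}$.

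First I would invoke Theorem~\ref{cohcor} to fix a transversal interval $I\subset M\setminus\Sigma$ for which the Poincar\'e return map $T\colon I\to I$ is a minimal ergodic IET and the scalar cocycle $\psi_{\eta,I}$ is a coboundary for every $\eta\in E^-_{r_{\pi/2-\theta}\omega}$. By Proposition~\ref{lem_flow_auto} applied to this $I$, the flow $(\widetilde{\varphi}^\theta_t)_{t\in\R}$ on $(\widetilde{M}_\gamma,\widetilde{\omega}_\gamma)$ admits a special representation over the skew product $T_{\psi_\gamma}\colon I\times\Z^{d_-}\to I\times\Z^{d_-}$, and its ergodicity is equivalent to that of $T_{\psi_\gamma}$. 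So it suffices to prove that $T_{\psi_\gamma}$ is not ergodic, and for that I aim to verify the hypotheses of Lemma~\ref{lem:skewpnonerg}: I must exhibit reals $\alpha_1,\dots,\alpha_{d_-}$, not all rational, and a cocycle $\psi^*$ cohomologous to $\psi_\gamma$ as an $\R^{d_-}$-valued cocycle, with $\sum_j\alpha_j\psi^*_j$ integer-valued.

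The reals $\alpha_j$ and the relevant homology decomposition come from the algebraic Lemma~\ref{alglem}. Since $\gamma_1,\dots,\gamma_{d_-}\in K\cap H_1(M,\Z)$ are independent (non-degeneracy) and $K\cap H_1(M,\Z)$ is a full-rank lattice in $V=\R\otimes_\Q K$, I would extend them to an $\R$-basis $\gamma_1,\dots,\gamma_{d_-},b_1,\dots,b_{d_+}$ of $V$ with $b_i\in K\cap H_1(M,\Z)$; here I use $\dim V=2d$, $\dim E^-_{r_{\pi/2-\theta}\omega}=d_+$ and $d_-=2d-d_+$, so the counts match. Let $V_\Z$ be the lattice generated by this basis; then $V_\Z\subseteq K\cap H_1(M,\Z)$. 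Taking $F:=E^-_{r_{\pi/2-\theta}\omega}$ of dimension $d_+$, I claim $F\cap V_\Z=\{0\}$: a nonzero $\kappa\in F\cap V_\Z$ would be an integer class, so $(\langle\kappa,\xi_i\rangle)_{i}\in\Z^{2d}\subset\R\cdot\Q^{2d}$, contradicting the Diophantine conclusion of Theorem~\ref{cohthm} for nonzero stable classes. Lemma~\ref{alglem} then yields $\alpha_1,\dots,\alpha_{d_-}$ not all rational and $c\in F$ with $\sum_j\alpha_j\gamma_j+c\in V_\Z$; writing $\kappa:=\sum_j\alpha_j\gamma_j+c\in V_\Z$ and $\eta:=-c\in E^-_{r_{\pi/2-\theta}\omega}$ gives the decomposition $\sum_j\alpha_j\gamma_j=\kappa+\eta$.

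Finally I would transfer this relation to the cocycle level. By linearity of $\beta\mapsto\psi_\beta$ one has $\sum_j\alpha_j\psi_{\gamma,j}=\psi_\kappa+\psi_\eta$, where $\psi_\kappa$ is integer-valued (as $\kappa\in H_1(M,\Z)$ and the $\xi_\alpha$ are integer classes) and $\psi_\eta=u-u\circ T$ is a coboundary by the choice of $I$. Picking an index $j_0$ with $\alpha_{j_0}\neq0$ (one exists since not all $\alpha_j$ are rational), I define the $\R^{d_-}$-valued transfer function $g$ with $g_{j_0}=-u/\alpha_{j_0}$ and $g_j=0$ otherwise, and set $\psi^*:=\psi_\gamma+g-g\circ T$. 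Then $\sum_j\alpha_j\psi^*_j=\psi_\kappa$ is integer-valued, so Lemma~\ref{lem:skewpnonerg} gives non-ergodicity of $T_{\psi_\gamma}$, hence of $(\widetilde{\varphi}^\theta_t)_{t\in\R}$. The final ``a.e.\ $\theta$'' clause follows because a.e.\ $\theta$ makes $\pi/2-\theta$ simultaneously Birkhoff (Theorem~\ref{thm:esch}), Oseledets and Masur (Theorem~\ref{thm:KMS}) generic. The main obstacle is not any single computation but the bookkeeping ensuring that Lemma~\ref{alglem} applies with $F=E^-_{r_{\pi/2-\theta}\omega}$ and $F\cap V_\Z=\{0\}$; the genuine analytic content---that stable classes give coboundaries and satisfy the irrationality condition---is precisely what Theorems~\ref{cohcor} and~\ref{cohthm} supply.
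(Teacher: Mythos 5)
Your proposal is correct and follows essentially the same route as the paper's proof: extend $\gamma_1,\ldots,\gamma_{d_-}$ to an integral basis of $K$, rule out nonzero stable lattice vectors via the Diophantine part of Theorem~\ref{cohthm}, apply Lemma~\ref{alglem} with $F=E^-_{r_{\pi/2-\theta}\omega}$, and conclude through Theorem~\ref{cohcor}, Lemma~\ref{lem:skewpnonerg} and Proposition~\ref{lem_flow_auto}. The only, immaterial, difference is bookkeeping: the paper absorbs the scalar by writing the stable correction as $\alpha_1\xi$ and adds the coboundary cocycle $(\psi_\xi,0,\ldots,0)$ to $\psi_\gamma$, whereas you scale the transfer function by $1/\alpha_{j_0}$; both produce a $\psi^*$ cohomologous to $\psi_\gamma$ with $\sum_j\alpha_j\psi^*_j$ integer-valued.
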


\begin{proof}
Let $\gamma=(\gamma_{1},\ldots,\gamma_{d_-})$ with $\gamma_j\in
K\cap H_1(M,\Z)$ for $j=1,\ldots,d_-$. By assumption,
$\gamma_{1},\ldots,\gamma_{d_-}$ are independent in $K\subset
H_1(M,\Q)$. Therefore, we can find $\sigma_1,\ldots,\sigma_{d_+}$
in $K\cap H_1(M,\Z)$ such that
$\gamma_{1},\ldots,\gamma_{d_-},\sigma_1,\ldots,\sigma_{d_+}$
establish a basis in $K$ and hence in $V$. By assumption, the
stable subspace $E_{r_{\pi/2-\theta}\omega}^-\subset V$ has
dimension $d_+>0$. Moreover, $E_{r_{\pi/2-\theta}\omega}^-\cap
K=\{0\}$. Indeed, let $\xi\in
E_{r_{\pi/2-\theta}\omega}^-\cap K$. Since the collection
$\gamma_{1},\ldots,\gamma_{d_-},\sigma_1,\ldots,\sigma_{d_+}$ is a
basis of $K\subset H_1(M,\Q)$ and the algebraic intersection number of $\xi\in H_1(M,\Q)$
with every such base element is rational, in view of Theorem~\ref{cohthm},
$\xi\in E_{r_{\pi/2-\theta}\omega}^-$ must be trivial.

Since $d_-+d_+=2d=\dim_\R V$, by Lemma~\ref{alglem}, there exist
real numbers $\alpha_1,\ldots,\alpha_{d_-}$ not all rational
(assume, with no lost of generality, that $\alpha_1$ is
irrational) and $\xi\in E_{r_{\pi/2-\theta}\omega}^-$ such that
\[\sum_{j=1}^{d_-}\alpha_j\gamma_j+\alpha_1\xi\in H_1(M,\Z).\]

Next let us choose an interval $I\subset M\setminus \Sigma$ such that the corresponding IET $T:I\to I$ satisfies
the assertion of Theorem~\ref{cohcor}. In the remainder of we show that the skew product $T_{\psi_{\gamma}}:I\times\Z^{d_-} \to I\times\Z^{d_-}$
is not ergodic. In view of Proposition~\ref{lem_flow_auto}, this gives non-ergodicity  of the flow
$(\widetilde{\varphi}^\theta_t)_{t\in \R}$ on
$(\widetilde{M}_\gamma, \widetilde{\omega}_\gamma)$.

Denote by $(I_\alpha)_{\alpha\in\mathcal{A}}$ the
family of  intervals exchanged by $T$ and let
$(\xi_\alpha)_{\alpha\in\mathcal{A}}$ be the family in $H_1(M,\Z)$
defined just before Proposition~\ref{lem_flow_auto}.

Since $\xi\in E_{r_{\pi/2-\theta}\omega}^-$, by Theorem~\ref{cohthm}, the cocycle $\psi_\xi:I\to\R$ is a
coboundary, hence $(\psi_\xi,0,\ldots,0):I\to\R^{d_-}$ is also a
coboundary. Let $\psi^*:=\psi_\gamma+(\psi_\xi,0,\ldots,0)$ and
let us consider the  cocycle
$\sum_{j=1}^{d_-}\alpha_j\psi_j^*:I\to\R$. If $x\in I_\alpha$ then
\begin{eqnarray*}
\sum_{j=1}^{d_-}\alpha_j\psi_j^*(x)&=&\sum_{j=1}^{d_-}\alpha_j\psi_{\gamma_j}(x)+\alpha_1\psi_{\xi}(x)=
\sum_{j=1}^{d_-}\alpha_j\langle\gamma_j,\xi_\alpha\rangle+\alpha_1\langle\xi,\xi_\alpha\rangle\\
&=&\Big\langle\sum_{j=1}^{d_-}\alpha_j\gamma_j+\alpha_1\xi,\xi_\alpha\Big\rangle.
\end{eqnarray*}
Since both homology classes
$\sum_{j=1}^{d_-}\alpha_j\gamma_j+\alpha_1\xi$, $\xi_\alpha$ belong
to $H_1(M,\Z)$, it follows that the values of the cocycle
$\sum_{j=1}^{d_-}\alpha_j\psi_j^*$ are only integer. As the
cocycles $\psi_\gamma$, $\psi^*$ are cohomologous and  not all numbers $\alpha_1,\ldots,\alpha_{d_-}$ are rational, in view of
Lemma~\ref{lem:skewpnonerg}, the skew product $T_{\psi_\gamma}$ is
not ergodic.
\end{proof}

\section{Recurrence}\label{sec:recgen}
In this section we present a general approach, based mainly on \cite{AH}, that helps to prove the recurrence of directional
flows on $\Z^d$-covers of a compact translation surface $(M,\omega)$. Let us consider the holonomy operator $\hol_\omega:H_1(M,\R)\to \C$ given by $\hol_\omega(\xi)=\int_{\xi}\omega$.
Recall that if $\gamma=(\gamma_1,\ldots,\gamma_d)\in H_1(M,\Z)^d$  and at least one $\gamma_j$ is not in $\ker(\hol_{\omega})$ then for almost every $\theta\in S^1$ the flow
$(\widetilde{\varphi}^\theta_t)_{t\in\R}$ on $(\widetilde{M}_\gamma,\widetilde{\omega}_\gamma)$ is not recurrent, see \cite{Ho-We}. From now on we will always assume that $\hol_{\omega}(\gamma_j)=0$ for all $1\leq j\leq d$.

Assume that
$H_1 (M,\mathbb{Q}) = K \oplus K^\perp$
is an orthogonal splitting such that
the subspace $V:=\R\otimes_{\Q}{K}\subset H_1(M,\R)$ defines an
$SL(2,\R)$-invariant subbundle $\mathcal{V}$ of the homological
bundle over $\mathcal{M}=\overline{SL(2,\R)\omega}$. Suppose that $K\subset \ker(\hol_{\omega})$.

\begin{proposition}[see  Proposition~2 in \cite{AH}] \label{prop:rec}
Suppose that $(M,\omega_*)\in\mathcal{M}$ has a vertical cylinder $C$
whose core $\sigma(C)\in K^\perp\cap H_1(M,Z)$. If the positive $(g_t)_{t\in\R}$
orbit of $(M,\omega)$ accumulates on $(M,\omega_*)$ then for any $\gamma\in (K\cap H_1(M,\Z))^d$
the vertical flow on the $\Z^d$-cover $(\widetilde{M}_\gamma,\widetilde{\omega}_\gamma)$
is recurrent.
\end{proposition}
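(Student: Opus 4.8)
The plan is to reduce the statement to a conservativity question for a $\Z^d$-valued cocycle and then feed it the geometric information carried by the limiting cylinder. First I would record that, since the forward Teichm\"uller orbit of $(M,\omega)$ accumulates on $\omega_*\in\mathcal{M}$, it is non-divergent, so by Masur's criterion the vertical flow on $(M,\omega)$ is uniquely ergodic, in particular minimal and ergodic. Hence Proposition~\ref{lem_flow_auto} applies and the vertical flow on $(\widetilde{M}_\gamma,\widetilde{\omega}_\gamma)$ is isomorphic to the skew product $T_{\psi_\gamma}$ over a minimal ergodic IET $T:I\to I$, with $\psi_\gamma(x)=\langle\gamma,\xi_\alpha\rangle$ on $I_\alpha$. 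Recurrence of the cover flow is then exactly conservativity of $T_{\psi_\gamma}$. Because the fibrewise $\Z^{d}$-translations commute with $T_{\psi_\gamma}$ and preserve $\mu\times m_{\Z^d}$, they preserve the Hopf decomposition; together with ergodicity of $T$ this forces an all-or-nothing dichotomy: $T_{\psi_\gamma}$ is either conservative (and then $\psi_\gamma^{(n)}(x)=0$ infinitely often for a.e.\ $x$) or totally dissipative (and then $\|\psi_\gamma^{(n)}(x)\|\to\infty$ for a.e.\ $x$). Thus it suffices to exhibit a set of positive measure on which $\liminf_{n}\|\psi_\gamma^{(n)}(x)\|<\infty$, which rules out dissipativity.

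The geometric heart is to produce such returns from the cylinder, working at the renormalised scale. Since $g_{t_n}\omega\to\omega_*$ and $\omega_*$ carries the vertical cylinder $C$, for all large $n$ the surface $(M,g_{t_n}\omega)$ contains, in a region of uniformly bounded geometry, an embedded cylinder-like set $C_n$ close to $C$, crossed by a short transversal $J_n$; a definite proportion $G_n\subset J_n$ consists of points whose vertical orbit makes a full turn close to the core before first returning to $J_n$. The corresponding return loop is homologous to the core class $\sigma(C)\in K^\perp\cap H_1(M,\Z)$, and here the orthogonality of the splitting enters decisively: for each $\gamma_j\in K$ one has $\langle\gamma_j,\sigma(C)\rangle=0$, so the cover displacement of that return vanishes. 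Because $g_{t_n}\omega$ stays in a fixed compact region, every entry and exit piece of such a passage has uniformly bounded homology, so the cover displacement accumulated over an entire passage through $C_n$ is bounded by a constant $D=D(\gamma,\omega_*)$ independent of $n$: the many turns around the core contribute nothing, and only the bounded entry/exit caps contribute. Recall that the affine map realising $g_{t_n}$ is the identity on the underlying topological surface, hence fixes $\gamma\in H_1(M,\Z)$ and lifts to the covers, while the vertical flow of $g_{t_n}\omega$ is a time change of that of $\omega$; therefore these estimates transfer verbatim to $(M,\omega)$ and to its cover.

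Finally I would promote these scale-by-scale controlled passages to genuine recurrence. For each large $n$ let $A_n\subset M$ be the flow-saturated set of points whose vertical orbit performs such a bounded-displacement passage at scale $t_n$; by Kac's formula $\mu(A_n)\ge c>0$ uniformly, while the passage times tend to infinity. Reverse Fatou gives $\mu(\limsup_n A_n)\ge c>0$, and for $x\in\limsup_n A_n$ the quantity $\|\langle\gamma,\sigma^v_t(x)\rangle\|$ returns below $D$ at arbitrarily large times, i.e.\ $\liminf_t\|\langle\gamma,\sigma^v_t(x)\rangle\|\le D$. Transcribed to the section $I$ this yields $\liminf_n\|\psi_\gamma^{(n)}(x)\|<\infty$ on a set of positive measure, contradicting total dissipativity; by the dichotomy $T_{\psi_\gamma}$ is conservative and the vertical flow on $(\widetilde{M}_\gamma,\widetilde{\omega}_\gamma)$ is recurrent.

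The main obstacle is precisely that the cylinder is a feature of the limit $\omega_*$ only: on every $g_{t_n}\omega$ the vertical flow is uniquely ergodic and so has \emph{no} genuine cylinder, and the windings inside $C_n$ are finite, with orbits eventually leaking out. The real work is therefore the uniform, $n$-independent control of the cover displacement over each approximate passage---ensuring that the unbounded number of turns genuinely contributes zero, which is where $\langle\gamma,\sigma(C)\rangle=0$ must be used exactly rather than approximately, and that the bounded-geometry entry/exit caps the remainder---together with the bookkeeping that converts good returns occurring at diverging scales into recurrence for one fixed positive-measure set.
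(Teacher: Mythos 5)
Your architecture---Masur's criterion giving unique ergodicity of the vertical flow from non-divergence of the Teichm\"uller orbit, the Hopf conservative/totally-dissipative dichotomy for the $\Z^d$-valued skew product, and the persistent cylinder near $\omega_*$ with $\langle\gamma_j,\sigma(C)\rangle=0$ killing the windings---is exactly the route of Proposition~2 in \cite{AH}, which the paper quotes without reproving. But your final bookkeeping has a genuine gap, in two places. First, the claim that the cover displacement over an \emph{entire} passage through $C_n$ is bounded by a constant $D$ independent of $n$ is false as stated: at an intermediate time inside a single winding, the displacement is the pairing of $\gamma$ with a partial piece of the core loop, whose $\omega$-length is of order $e^{t_n}$ (vertical lengths on $(M,\omega)$ are $e^{t_n}$ times those on $(M,g_{t_n}\omega)$), and such partial pairings are controlled only by $\sup_{s\le t_n}\|\gamma\|_{g_s\omega}$ (cf.\ \eqref{baseuniform} and Lemma~\ref{lem:twoentr}), which grows like $e^{\lambda_1 s}$ when the KZ cocycle has positive exponents on the bundle generated by $K$. ``Bounded geometry at scale $t_n$'' bounds the caps in the $g_{t_n}\omega$-Hodge norm, not their pairing with the fixed class $\gamma$. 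What is true, and all the argument needs, is control of the displacement sampled only at \emph{completions of full windings}, where each increment is exactly $\langle\gamma,\sigma(C_n)\rangle=0$.

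Second, and more seriously, the flow-saturation via Kac misaligns base points. For $x$ in your saturated set $A_n$, the (zero or bounded) displacement acquired during the passage is measured relative to the \emph{entry point} of the passage, whereas contradicting total dissipativity requires $\liminf_{t}\|\langle\gamma,\sigma^v_t(x)\rangle\|<\infty$ computed from $x$'s \emph{own} starting point; the entry displacement of a saturated point is unbounded (indeed, under the dissipativity you are ruling out it tends to infinity), so your conclusion ``$\liminf_t\le D$ on $\limsup_n A_n$'' does not follow. The repair is short: take $A_n$ to be the set of points lying at time $0$ in the middle portion of the cylinder $C_n$ itself; its measure is bounded below by a fixed fraction of the area of $C$, so no Kac formula is needed. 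For such $x$ the first full-winding return to the cross-section occurs at a time comparable to $e^{t_n}\to\infty$, the orbit segment closes up by an arc of the cross-section of $\omega$-length $O(e^{-t_n})$, and its homology class is $\sigma(C_n)$, so the deck displacement is exactly $0$: the orbit of the lift $\widetilde{x}$ returns to a fixed-radius ball around $\widetilde{x}$ at unbounded times for every $x\in\limsup_n A_n$, a positive-measure set, which kills total dissipativity. (One further point you gloss over: $g_{t_n}\omega\to\omega_*$ holds in moduli space, so the core of the cylinder on $g_{t_n}\omega$ is identified with $\sigma(C)$ only up to mapping classes; that its pairing with $\gamma$ still vanishes uses the standing hypothesis of Section~\ref{sec:recgen} that $K\oplus K^\perp$ defines an $SL(2,\R)$-invariant splitting over $\mathcal{M}$.)
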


\begin{corollary}\label{cor:recgen}
Suppose that $(M,\omega)$ has a cylinder $C$
whose core $\sigma(C)\in K^\perp\cap H_1(M,Z)$. If $\pi/2-\theta\in S^1$ is Birkhoff generic then for every $\gamma\in (K\cap H_1(M,\Z))^d$
the directional flow $(\widetilde{\varphi}^\theta_t)_{t\in\R}$
on the $\Z^d$-cover $(\widetilde{M}_\gamma,\widetilde{\omega}_\gamma)$
is recurrent.
\end{corollary}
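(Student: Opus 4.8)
The plan is to deduce Corollary~\ref{cor:recgen} from Proposition~\ref{prop:rec} by rotating the Abelian differential twice and invoking the density of a Birkhoff generic forward Teichm\"uller orbit. The first step is to turn the directional flow into a vertical flow. Since $r_\phi\omega=e^{i\phi}\omega$ carries the direction $\theta$ onto the vertical direction exactly when $\phi=\pi/2-\theta$ (the convention already used in the statements of Theorems~\ref{cohthm}~and~\ref{cohcor}), the directional flow $(\widetilde{\varphi}^\theta_t)_{t\in\R}$ on $(\widetilde{M}_\gamma,\widetilde{\omega}_\gamma)$ agrees with the vertical flow on the $\Z^d$-cover of $(M,\omega')$ determined by $\gamma$, where $\omega':=r_{\pi/2-\theta}\omega$; note that the $\Z^d$-cover depends only on $\gamma\in H_1(M,\Z)^{d}$ and is unchanged by rotating the form. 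Hence it suffices to apply Proposition~\ref{prop:rec} with $\omega$ replaced by $\omega'$.

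Next I would produce the target surface $(M,\omega_*)$ required by the proposition. Let $\beta$ be the direction of the core curve of the cylinder $C$ provided by the hypothesis and set $\omega_*:=r_{\pi/2-\beta}\omega$. This rotation turns $C$ into a vertical cylinder, and since the core class $\sigma(C)\in H_1(M,\Z)$ is a topological invariant of the curve, independent of the flat structure, it is unchanged and still lies in $K^\perp\cap H_1(M,\Z)$. Moreover $\omega_*\in\mathcal{M}=\overline{SL(2,\R)\omega}$ because $r_{\pi/2-\beta}\in SO(2)\subset SL(2,\R)$ and $\mathcal{M}$ is $SL(2,\R)$-invariant. Thus $(M,\omega_*)$ satisfies all the hypotheses imposed on the target surface in Proposition~\ref{prop:rec}.

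The remaining and most substantive point is to show that the positive $(g_t)_{t\in\R}$-orbit of $\omega'$ accumulates on $\omega_*$. Because $\pi/2-\theta$ is Birkhoff generic, Theorem~\ref{thm:esch} applied to $\omega$ gives, for every $\phi\in C_c(\mathcal{M}_1(M))$,
\[
\lim_{T\to\infty}\frac{1}{T}\int_0^T\phi(g_t\omega')\,dt=\int_{\mathcal{M}}\phi\,d\nu_{\mathcal{M}}.
\]
By the Eskin--Mirzakhani--Mohammadi theorem the support of $\nu_{\mathcal{M}}$ is all of $\mathcal{M}$, and $\omega_*\in\mathcal{M}$, so every neighborhood of $\omega_*$ carries positive $\nu_{\mathcal{M}}$-measure. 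Choosing $\phi\geq0$ supported in such a neighborhood with $\int_{\mathcal{M}}\phi\,d\nu_{\mathcal{M}}>0$ forces $(g_t\omega')_{t\geq 0}$ to revisit that neighborhood for arbitrarily large $t$, so the forward orbit accumulates on $\omega_*$. Proposition~\ref{prop:rec} then yields recurrence of the vertical flow on $(\widetilde{M}_\gamma,\widetilde{\omega'}_\gamma)$, which by the first step is precisely the directional flow $(\widetilde{\varphi}^\theta_t)_{t\in\R}$ on $(\widetilde{M}_\gamma,\widetilde{\omega}_\gamma)$. I expect the only delicate step to be this passage from the equidistribution in Theorem~\ref{thm:esch} to genuine forward accumulation on $\omega_*$, which rests on identifying $\mathcal{M}$ with $\operatorname{supp}\nu_{\mathcal{M}}$; the rotations and the reduction to a vertical flow are routine once the sign convention $\phi=\pi/2-\theta$ is fixed.
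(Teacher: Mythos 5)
Your proposal is correct and follows essentially the same route as the paper: rotate by $r_{\pi/2-\theta}$ to reduce to the vertical flow (using $\widetilde{r_{\pi/2-\theta}\omega}_\gamma=r_{\pi/2-\theta}\widetilde{\omega}_\gamma$), rotate by $r_{\pi/2-\theta_0}$ to make the cylinder $C$ vertical, and use Birkhoff genericity with compactly supported test functions concentrated near $\omega_*$ (the paper uses a nested sequence $(\phi_k)$, you use a single well-chosen $\phi$ per neighborhood, which is the same idea) to extract $t_n\to+\infty$ with $g_{t_n}(r_{\pi/2-\theta}\omega)\to\omega_*$, then invoke Proposition~\ref{prop:rec}. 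Your explicit identification of the delicate point --- that positive time-averages require $\omega_*\in\operatorname{supp}\nu_{\mathcal{M}}$, guaranteed because the affine measure of Eskin--Mirzakhani--Mohammadi has full support in $\mathcal{M}$ --- is implicit but equally necessary in the paper's argument.
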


\begin{proof}
Denote by $\theta_0\in S^1$ the direction of the core of the cylinder $C$.
Then $C$ is a vertical cylinder for $(M,r_{\pi/2-\theta_0}\omega)\in \mathcal{M}$.
Since $\pi/2-\theta\in S^1$ is Birkhoff generic, applying \eqref{eq:birk} sequence a sequence
$(\phi_k)_{k\geq 1}$ in $C_c(\mathcal{M})$ such that $(\supp(\phi_k))_{k\geq 1}$
is  a decreasing nested sequence of non-empty compact subsets with the intersection
$\{r_{\pi/2-\theta_0}\omega\}$, there exists $t_n\to+\infty$ such that
$g_{t_n}(r_{\pi/2-\theta}\omega)\to r_{\pi/2-\theta_0}\omega$.
By Proposition~\ref{prop:rec}, for any $\gamma\in (K\cap H_1(M,\Z))^d$
the vertical flow on the $\Z^d$-cover $(\widetilde{M}_\gamma,\widetilde{r_{\pi/2-\theta}\omega}_\gamma)$
is recurrent. Since $\widetilde{r_{\pi/2-\theta}\omega}_\gamma=r_{\pi/2-\theta}\widetilde{\omega}_\gamma$,
it follows that the directional flow $(\widetilde{\varphi}^\theta_t)_{t\in\R}$
on $(\widetilde{M}_\gamma,\widetilde{\omega}_\gamma)$
is recurrent.
\end{proof}

\section{Square-tiled surfaces} \label{sect:square-tiled}
In this section we apply the results of Sections~\ref{sec:nonerggen} and \ref{sec:recgen} to establish some
ergodic properties of directional flows on $\Z^d$-cover of square-tiled surfaces.

A translation surface $(M,\omega)$ is {\em
square-tiled}   if there exists a ramified cover $p:M\to\R^2/\Z^2$
unramified outside $0\in\R^2/\Z^2$ such that $\omega=p^*(dz)$.
Any compact square
tiled surface $(M,\omega)$ can be realized by gluing finitely
many squares of equal sides in $\mathbb{R}^2$ by
identifying each left vertical side of a square with a right vertical
side of some square and each top horizontal side with a bottom
horizontal side via translations.

Let $(M,\omega)$ be such a compact square-tiled surface of genus $g>1$ and
set $\Sigma'=p^{-1}(\{0\})\subset M$.
For $i$-th square of $(M,\omega)$, let $\sigma_i,\zeta_i\in
H_1(M,\Sigma',\Z)$ be the relative homology class of the path in
the $i$-th square from the bottom left corner to the bottom right
corner (horizontal) and to the upper left corner (vertical), respectively. Let
$\sigma=\sum\sigma_i\in H_1(M,\Z)$ and $\zeta=\sum\zeta_i\in
H_1(M,\Z)$.
Denote by $H_1^{(0)}(M,\Q)$  the kernel of the homomorphism $p_*:H_1(M,\Q)\to H_1(\R^2/\Z^2,\Q)$ and let
$H_1^{st}(M,\Q)=\Q\sigma\oplus\Q\zeta$.
Then
\[H_1(M,\Q)=H^{(0)}_1(M,\Q)\oplus H_1^{st}(M,\Q)\]
is an orthogonal symplectic splitting which is $SL(2,\R)$-invariant over the orbit closure
od $(M,\omega)$ which is equal o $SL(2,\R)\cdot\omega$. Moreover,
\begin{equation*}\label{kerhol}
\dim_\Q H_1^{(0)}(M,\Q)=2g-2\quad\text{ and }\quad H_1^{(0)}(M,\Q)=\ker(\hol)\cap H_1(M,\Q).
\end{equation*}

\begin{remark}
Suppose that the square-tiled surface $M$ is decomposed into one cylinder, i.e.\ there is a cylinder $C\subset M$ with $\overline{C}=M$. Then
the core $\sigma(C)\in H_1^{st}(M,\Q)$.
\end{remark}

Now directly from Corollary~\ref{cor:recgen} and Theorem~\ref{non-ergodicitycriterion} we obtain Theorem \ref{thm:nonergo-easy} and

\begin{corollary}
Let $(M,\omega)$ be a compact square-tiled surface decomposed into one cylinder. Then for every
$\gamma\in (\ker(\hol)\cap H_1(M,\Z))^d$ ($d\geq 1$)
and a.e.\ direction $\theta\in S^1$ the directional flow on the
$\Z^d$-cover $(\widetilde{M}_\gamma,\widetilde{\omega}_\gamma)$ is recurrent.
\end{corollary}

\begin{remark}
Historically, this Corollary was proven in \cite{Hu-Le-Tr} for the wind-tree model. It was a motivation to get  a more general criterion like
Corollary~\ref{cor:recgen}.
\end{remark}



\section{General wind-tree model}\label{sec:genwt}
Let us consider a connected and simply connected centrally
symmetric polygon whose sides are only vertical or horizontal, and
suppose that $P$. A lattice $\Lambda\subset\R^2$ is called
$P$-admissible if the translated polygons $P+\rho$, $\rho\in
\Lambda$ are pairwise disjoint.

In this section we will deal with general periodic versions of
wind-tree model, this is with the billiard flow on the planar
table which is the complement (in $\R^2$) of the
$\Lambda$-periodic array of $P$ scatterers (whenever $\Lambda$ is
$P$-admissible), see Figure~\ref{fig:lattice}. The billiard table
we will denote by $W(\Lambda,P)$. The billiard flow on
$W(\Lambda,P)$, i.e.\ the geodesic flow on $W(\Lambda,P)$,  we
will denote by $(b_t)_{t\in\R}$.
\begin{figure}[h]
\includegraphics[width=0.8\textwidth]{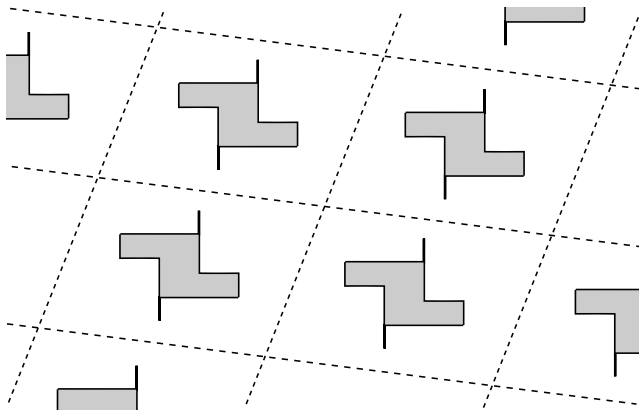}
\caption{Billiard table $W(\Lambda,P)$}\label{fig:lattice}
\end{figure}
Every trajectory of $(x,\theta)\in W(\Lambda,P)\times S^1$ for
$(b_t)_{t\in\R}$  travels in at most four directions, belonging to
the set $\Gamma \theta :=\{ \pm \theta, \pi\pm\theta\}$. The
restriction of $(b_t)_{t\in \R}$ to the invariant set
$W(\Lambda,P)\times\Gamma\theta$  will be denoted by
$(b_t^\theta)_{t\in\R}$ and called the directional billiard flow.
The flow $(b_t^\theta)_{t\in\R}$ preserves the product measure of
the Lebesgue measure on $W(\Lambda,P)$ and the counting measure on
$\Gamma\theta$ and   ergodic properties of $(b_t^\theta)_{t\in\R}$
refers to ergodic properties considered with respect to this
infinite measure.

Let us denote  by $\Gamma$ the $4$-elements group of isometries of
the plane generated by the horizontal and vertical reflections of
$\R^2$. Using the standard unfolding process, for every direction
$\theta\in S^1$ the flow $(b^{\theta}_t)_{t\in\R}$ on
$W(\Lambda,P)$ is isomorphic to the directional flow
$(\widetilde{\varphi}^{\theta}_t)_{t\in\R}$ on a non-compact
translation surface $\widetilde{M}(\Lambda,P)$. The surface
$\widetilde{M}(\Lambda,P)$ is obtained by gluing four copies of
$W(\Lambda,P)$, namely $\varrho \big(W(\Lambda,P)\big)$ for
$\varrho\in\Gamma$ so that each vertical and horizontal piece of
the boundary of any copy  $\varrho \big(W(\Lambda,P)\big)$ is
glued to is image by the vertical and horizontal reflection
respectively.
\begin{figure}[h]
\includegraphics[width=0.8\textwidth]{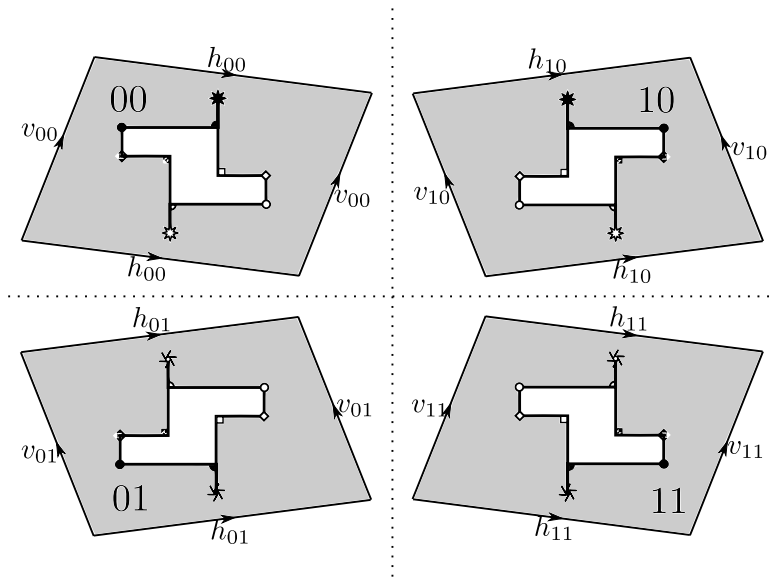}
\caption{Surface $(M,\omega)=M(\Lambda,P)$}\label{fig:surface2}
\end{figure}
The translation surface $\widetilde{M}(\Lambda,P)$ is a
$\mathbb{Z}^2$-cover of a compact translation surface
$(M,\omega)={M}(\Lambda,P)$, see in Figure \ref{fig:surface2}, and
the cover is given by the pair $\gamma=(\gamma_v,\gamma_h)\in
H_1({M}, \Z^2)$, where $\gamma_v=v_{00}-v_{10}+v_{01}-v_{11}$ and
$\gamma_h=h_{00}+h_{10}-h_{01}-h_{11}$ (referring to the labeling
of Figure \ref{fig:surface2}). The surface $M={M}(\Lambda,P)$ has
two natural involution $\tau:{M}\to{M}$, $\zeta_0:M\to M$. The map
$\tau$ acts by translations interchanging the part $00$ with $11$
and the part $10$ with $01$. The map $\zeta_0$ reflects each part
$00$, $01$, $10$ and $11$ in self through the center of symmetry
of the polygon. Then $\tau^*\omega=\omega$ and
$\zeta_0^*\omega=-\omega$. Let $\zeta_1:=\zeta_0\circ \tau$. Then
$\zeta_1:M\to M$ is an involution such that
$\zeta_1^*\omega=-\omega$. We will deal with two half-translation
surfaces $(X_0,q_0)=(M,\omega)/\langle\zeta_0\rangle$ and
$(X_1,q_1)=(M,\omega)/\langle\zeta_1\rangle$.

Let us consider all corners of the polygon $P$ and their full
exterior angles ($2\pi$ minus the angle). They can be equal
$\pi/2$, $3\pi/2$ and $2\pi$. Let $2m_j$ for $j=1,3,4$ be the
numbers of corners of $P$ with the exterior angle $j\cdot \pi /2$.
Then, computing the Euler characteristic of a piecewise flat annulus
(the complement in the complex plane of the connected polygon) we get:
\begin{equation} \label{Euler-Characteristic}
-2m_1+2m_3+4m_4=4.
\end{equation}
Each corner with exterior angle $\pi/2$ gives one singular point
on $(M,\omega)$ with angle $4\cdot\pi/2=2\pi$. Each corner with
exterior angle $3\pi/2$ gives one singular point on $(M,\omega)$
with angle $4\cdot 3\pi/2=6\pi$. Whereas, each corner with
exterior angle $2\pi$ gives two singular points on $(M,\omega)$
with angle $2\cdot 2\pi=4\pi$. Therefore,
$(M,\omega)\in\mathcal{H}(0^{2m_1},1^{4m_4},2^{2m_3})$ and the
genus of $M$ is $g=2m_3+2m_4+1$.

Let us look at the half-translation surface $(X_0,q_0)$. The
involution $\zeta_0$ does not fix any singular point (fake
singularities also). The only fixed points of $\zeta_0$ are
corners and centers of sides of parallelograms given by parts
$00$, $01$, $10$ and $11$ of $M$, there are $12$. Therefore,
$(X_0,q_0)\in\mathcal{Q}(0^{m_1},2^{2m_4},4^{m_3},(-1)^{12})$ and
the genus of $X_0$ is $g_0=m_3+m_4-2$.

The involution $\zeta_1$ fixes all singular point of degree $0$
and $2$ and does not fix singularities of degree $1$. Moreover,
there are no further fixed regular points.  Therefore,
$(X_1,q_1)\in\mathcal{Q}((-1)^{2m_1},2^{2m_4},1^{2m_3})$ and the
genus of $X_0$ is
\[g_1=\frac{1}{4}(2m_3+4m_4-2m_1+4)=2.\]
For $i=0,1$ the space $H_1(M,\Q)$ have a natural orthogonal
splitting into $\zeta_i$-invariant and $\zeta_i$-anti-invariant
parts, more precisely
\[H_1(M,\Q)=K_i\oplus K_i^\perp,\]
with
\[K_i:=\{\xi\in H_1(M,\Q):(\zeta_i)_*\xi=\xi\},\qquad K_i^\perp:=\{\xi\in H_1(M,\Q):(\zeta_i)_*\xi=-\xi\}.\]
Then
\[\dim K_0=2g_0=2(m_3+m_4-2),\quad \dim K_0^\perp=2g-2g_0=2(m_3+m_4+3), \]
\[\dim K_1=2g_1=2\cdot 2,\quad \dim K_1^\perp=2g-2g_1=2(2m_3+2m_4-1).\]
Moreover,
\[(\zeta_0)_*v_{ij}=-v_{ij}\ \text{ and }\ (\zeta_0)_*h_{ij}=-h_{ij}\ \text{ for all }i,j\in\{0,1\}\]
and
\[(\zeta_1)_*v_{00}=v_{11},\quad (\zeta_1)_*v_{01}=v_{10},\quad (\zeta_1)_*h_{00}=h_{11},\quad (\zeta_1)_*h_{01}=h_{10}.\]
It follows that
\[v_0:=v_{00}-v_{11},\quad v_1:=v_{10}-v_{01},\quad h_0:=h_{00}-h_{11},\quad h_1:=h_{10}-h_{01}\]
belong to both $K_1$ and $K_0^\perp$. Since
\[\langle h_i,v_j\rangle=2\delta_{i,j}\quad \text{and}\quad \langle h_i,h_j\rangle=\langle v_i,v_j\rangle=0 \]
for all $i,j\in\{0,1\}$, $h_0,h_1,v_0,v_1$ is a symplectic basis
of $K_1$. Hence $K_1\subset K_0^\perp$, so $K_0$ and $K_1$ are
symplectic orthogonal. It follows that $H_1(M,\Q)$ have an
orthogonal $SL(2,\R)$-invariant splitting
\[H_1(M,\Q)=K_1\oplus K_{0}\oplus K^\perp,\ \text{where}\ K^\perp:=K_0^{\perp}\cap K_1^\perp\ \text{and}\ \dim K^\perp=2(m_3+m_4+1).\]
All three subspaces $K_0,K_1,K^\perp$ define $SL(2,\R)$-invariant
symplectic bundles $\mathcal{V}_0$, $ \mathcal{V}_1$,
$\mathcal{V}^\perp$ over $\mathcal{M}=\overline{SL(2,\R)\omega}$.
Let us denote by
\[\lambda_1,\lambda_{2},-\lambda_{2},-\lambda_{1};\quad
\lambda^{1}_1,\ldots,\lambda^1_{m_3+m_4-2},-\lambda^1_{m_3+m_4-2},\ldots,-\lambda^1_{1};\]\[
\lambda^{\perp}_1,\ldots,\lambda^\perp_{m_3+m_4+1},-\lambda^\perp_{m_3+m_4+1},\ldots,-\lambda^\perp_{1}\]
their Lyapunov exponents respectively.

Now Theorem~\ref{thm:ekz} applied twice to the surface $(X_0,q_0)$
and to $(X_1,q_1)$ yields
\[\sum_{j=1}^{m_3+m_4+1}\lambda^\perp_j +\lambda_1+\lambda_2-\sum_{j=1}^{m_3+m_4-2}\lambda^1_j=\frac{1}{4}(12\cdot 1)=3\]
and
\[\sum_{j=1}^{m_3+m_4+1}\lambda^\perp_j +\sum_{j=1}^{m_3+m_4-2}\lambda^1_j-\lambda_1-\lambda_2=\frac{1}{4}\Big(2m_1\cdot 1+2m_3\cdot\frac{1}{3}\Big)=
\frac{2}{3}m_3+m_4-1.\] It follows that
\begin{equation}\label{eq:sumlambda}
\lambda_1+\lambda_2=
2-\frac{1}{3}m_3-\frac{1}{2}m_4+\sum_{j=1}^{m_3+m_4-2}\lambda^1_j.
\end{equation}

\begin{theorem}\label{thm:nonP}
Suppose that the polygon $P$ and the lattice $\Lambda$ are such that $\lambda_1>0$ and $\lambda_2>0$.
Then for a.e.\ direction $\theta\in S^1$ the  directional billiard flow
$(b^\theta_t)_{t\in\R}$ on $W(\Lambda,P)$ is not ergodic.
\end{theorem}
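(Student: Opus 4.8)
The plan is to deduce the statement directly from Theorem~\ref{non-ergodicitycriterion}, applied to the symplectic splitting with $K=K_1$. Recall that, via the unfolding construction described above, the directional billiard flow $(b^\theta_t)_{t\in\R}$ on $W(\Lambda,P)$ is isomorphic to the directional flow $(\widetilde{\varphi}^\theta_t)_{t\in\R}$ on the $\Z^2$-cover $(\widetilde{M}_\gamma,\widetilde{\omega}_\gamma)$ of $(M,\omega)=M(\Lambda,P)$, where $\gamma=(\gamma_v,\gamma_h)$ with $\gamma_v=v_{00}-v_{10}+v_{01}-v_{11}$ and $\gamma_h=h_{00}+h_{10}-h_{01}-h_{11}$. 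Hence it suffices to establish non-ergodicity of $(\widetilde{\varphi}^\theta_t)_{t\in\R}$ for almost every $\theta$.

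First I would set up the data required by Theorem~\ref{non-ergodicitycriterion}. Take $K=K_1$ in the orthogonal $SL(2,\R)$-invariant decomposition $H_1(M,\Q)=K_1\oplus K_0\oplus K^\perp$; since $\dim_\Q K_1=4$ we have $d=2$, and the associated $SL(2,\R)$-invariant subbundle is $\mathcal{V}_1$, whose Lyapunov exponents are $\lambda_1,\lambda_2,-\lambda_2,-\lambda_1$. The hypothesis $\lambda_1>0$ and $\lambda_2>0$ is exactly what guarantees that the number of positive exponents is $d_+=2$, so that $d_-=2d-d_+=2$. This is the crucial numerical coincidence: the corank $d_-$ coming from the stable part must equal the rank of the $\Z^2$-cover produced by the unfolding, and it is the simultaneous positivity of \emph{both} exponents that forces this to happen.

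Next I would verify that the concrete cover $\gamma=(\gamma_v,\gamma_h)$ has the form required by the theorem, namely $\gamma\in(K_1\cap H_1(M,\Z))^{d_-}=(K_1\cap H_1(M,\Z))^2$. Using the classes $v_0=v_{00}-v_{11}$, $v_1=v_{10}-v_{01}$, $h_0=h_{00}-h_{11}$, $h_1=h_{10}-h_{01}$ introduced earlier, a direct rewriting gives $\gamma_v=v_0-v_1$ and $\gamma_h=h_0+h_1$. Since $h_0,h_1,v_0,v_1$ is a symplectic basis of $K_1$, both $\gamma_v$ and $\gamma_h$ lie in $K_1$, they are integral, and they are linearly independent in $H_1(M,\Q)$ (one involves only the $v$-classes, the other only the $h$-classes), so the $\Z^2$-cover is non-degenerated.

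With these verifications in place, Theorem~\ref{non-ergodicitycriterion} applies and yields, for every BOM generic $\pi/2-\theta$, and in particular for almost every $\theta\in S^1$, the non-ergodicity of $(\widetilde{\varphi}^\theta_t)_{t\in\R}$ on $(\widetilde{M}_\gamma,\widetilde{\omega}_\gamma)$. Transporting back through the unfolding isomorphism then gives non-ergodicity of $(b^\theta_t)_{t\in\R}$ on $W(\Lambda,P)$ for almost every $\theta$. The only genuine point to watch is the dimension count of the second paragraph: everything rests on the positivity of both $\lambda_1$ and $\lambda_2$ forcing $d_+=2$ and hence $d_-=2$, matching the rank of the cover; the remainder is the explicit homological bookkeeping that locates $\gamma_v,\gamma_h$ inside $K_1$.
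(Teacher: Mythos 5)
Your proposal is correct and follows essentially the same route as the paper: unfold the billiard to the $\Z^2$-cover of $M(\Lambda,P)$ given by $\gamma=(\gamma_v,\gamma_h)$, apply Theorem~\ref{non-ergodicitycriterion} with $K=K_1$ (so $2d=4$, $d_+=2$ by positivity of $\lambda_1,\lambda_2$, hence $d_-=2$), and check that $\gamma_v=v_0-v_1$ and $\gamma_h=h_0+h_1$ are independent integral classes in $K_1$. The only cosmetic difference is the independence check, where the paper computes $\langle\gamma_v,v_0\rangle=0\neq 2=\langle\gamma_h,v_0\rangle$ while you invoke the basis $h_0,h_1,v_0,v_1$ of $K_1$; both are valid.
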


\begin{proof}
We need to show that for a.e.\ $\theta\in S^1$ the flow $(\widetilde{\varphi_t^\theta})_{t\in\R}$ on $\widetilde{M(\Lambda,P)}_\gamma$ is non-ergodic,
where $\gamma:=(\gamma_v,\gamma_h)\in H_1(M,\Z)^2$. To prove this, we apply Theorem~\ref{non-ergodicitycriterion} to the surface $(M,\omega)=M(\Lambda,P)$
and the symplectic $SL(2,\R)$-invariant splitting $H_1(M,\Q)=K_1\oplus K_1^\perp$. Recall that $\dim_\Q K_1=2d=4$, the Lyapunov exponents of the bundle $\mathcal{V}_1$ generated by $K_1$ are $\lambda_1,\lambda_2,-\lambda_2,-\lambda_1$ and
\[\gamma_v=v_0-v_1\in K_1\quad\text{ and }\quad \gamma_h=h_0+h_1\in K_1.\]
Since both exponents $\lambda_1,\lambda_2$ are positive, we have $d_+=2$, so $d_-=2d-d_+=2$. As $\gamma_v,\gamma_h\in K_1\cap H_1(M,\Z)$ and they are independent ($\langle\gamma_v,v_0\rangle=0\neq 2=\langle\gamma_h,v_0\rangle$), by  Theorem~\ref{non-ergodicitycriterion}, for a.e.\ $\theta\in S^1$ the flow $(\widetilde{\varphi_t^\theta})_{t\in\R}$ on $\widetilde{M(\Lambda,P)}_\gamma$ is non-ergodic.
\end{proof}

\begin{corollary}\label{cor:fin}
Suppose that the polygon $P$ is such that $\frac{1}{3}m_3+\frac{1}{2}m_4\leq 1$. If $\Lambda$ is $P$-admissible then
for a.e.\ direction $\theta\in S^1$ the  directional billiard flow
$(b^\theta_t)_{t\in\R}$ on $W(\Lambda,P)$ is not ergodic.
\end{corollary}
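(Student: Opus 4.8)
The plan is to derive Corollary~\ref{cor:fin} from Theorem~\ref{thm:nonP} by showing that the hypothesis $\tfrac{1}{3}m_3+\tfrac{1}{2}m_4\leq 1$ forces both top Lyapunov exponents $\lambda_1,\lambda_2$ of the bundle $\mathcal{V}_1$ to be strictly positive. Since Theorem~\ref{thm:nonP} already converts the positivity of $\lambda_1$ and $\lambda_2$ into non-ergodicity of the directional billiard flow for a.e.\ $\theta$, the entire content of the corollary is this numerical positivity statement about the exponents.

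First I would recall the Lyapunov exponents of $\mathcal{V}_1$: by the earlier discussion they are $\lambda_1\geq\lambda_2\geq -\lambda_2\geq -\lambda_1$, and since the Kontsevich–Zorich cocycle on a symplectic bundle always has nonnegative top exponents we have $\lambda_1\geq\lambda_2\geq 0$ automatically. The key identity is \eqref{eq:sumlambda}, namely
\[
\lambda_1+\lambda_2=2-\tfrac{1}{3}m_3-\tfrac{1}{2}m_4+\sum_{j=1}^{m_3+m_4-2}\lambda^1_j.
\]
Under the standing hypothesis $\tfrac{1}{3}m_3+\tfrac{1}{2}m_4\leq 1$ the constant part $2-\tfrac{1}{3}m_3-\tfrac{1}{2}m_4$ is at least $1$, and since every $\lambda^1_j\geq 0$ (again the top half of a symplectic KZ-spectrum is nonnegative), the whole right-hand side is at least $1$. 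Hence $\lambda_1+\lambda_2\geq 1>0$, which already gives $\lambda_1>0$.

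The remaining point, and the place where one must be slightly careful, is to upgrade $\lambda_1>0$ to the separate positivity of $\lambda_2$, since $\lambda_1+\lambda_2>0$ alone would be consistent with $\lambda_2=0$. The clean way to close this gap is to invoke the nondegeneracy of the second exponent for the four-dimensional symplectic bundle $\mathcal{V}_1$ associated to the genus-$2$ half-translation surface $(X_1,q_1)$: because $g_1=2$, the bundle $\mathcal{V}_1\cong H_1(X_1,\R)$ carries exactly two nonnegative exponents $\lambda_1\geq\lambda_2\geq 0$, and one appeals to the fact that for the Kontsevich–Zorich cocycle over an affine invariant measure supported on a genus-$2$ locus the second exponent of the full Hodge bundle is strictly positive (as follows from Forni's criterion, quoted in the remark after Theorem~\ref{thm:nonergo-easy}, together with the classification of $SL(2,\R)$-orbit closures in genus $2$ by McMullen). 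I expect this separation of $\lambda_2$ from zero to be the main obstacle: the additive identity \eqref{eq:sumlambda} controls only the sum, so positivity of the smaller exponent must come from an external input on the $g_1=2$ factor rather than from the Euler-characteristic bookkeeping. Once $\lambda_2>0$ is established, both hypotheses of Theorem~\ref{thm:nonP} hold, and the conclusion of Corollary~\ref{cor:fin} follows immediately for every $P$-admissible lattice $\Lambda$.
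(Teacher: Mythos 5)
Your reduction to Theorem~\ref{thm:nonP} and your first step coincide with the paper's: by \eqref{eq:sumlambda} and the nonnegativity of the $\lambda^1_j$, the hypothesis $\frac{1}{3}m_3+\frac{1}{2}m_4\leq 1$ gives $\lambda_1+\lambda_2\geq 1$, and you correctly identify the crux, namely that the sum alone is consistent with $\lambda_2=0$. However, the way you close that gap does not work. The bundle $\mathcal{V}_1$ is a rank-$4$ $SL(2,\R)$-invariant subbundle of the homological bundle over $\mathcal{M}=\overline{SL(2,\R)\omega}$, where $\omega$ lives on the covering surface $M$ of genus $g=2m_3+2m_4+1$; equivalently, $\lambda_1,\lambda_2$ are the $\lambda^+$-exponents of the genus-two \emph{quadratic} differential $(X_1,q_1)$. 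This is not the Hodge bundle over a locus of Abelian differentials in genus $2$, so McMullen's classification of orbit closures in genus $2$ is not available here, and there is no general theorem asserting positivity of the second exponent of such an invariant subbundle: invariant subbundles of the Kontsevich--Zorich cocycle can carry identically zero spectrum (square-tiled cyclic covers, see \cite{EKZ1}, which is exactly why the remark after Theorem~\ref{thm:nonergo-easy} presents positivity as something to be \emph{checked}). Likewise, Forni's criterion is a sufficient condition that must be verified at a suitable point in the support of $\nu_{\mathcal{M}}$; your proposal invokes it without verification, and a uniform verification over all pairs $(\Lambda,P)$ is precisely what is missing. So as written, the decisive step of your argument is a citation to a fact that is not a theorem in this setting.

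The ingredient the paper actually uses is an \emph{upper} bound, not an external positivity theorem, and it is elementary: since $\zeta_1^*\omega=-\omega$, every $\xi\in K_1$ satisfies $\int_\xi\omega=0$, so $K_1\subset\ker(\hol_\omega)$ and $\mathcal{V}_1$ is symplectically orthogonal to the tautological plane; by the spectral gap for the Kontsevich--Zorich cocycle, all exponents of $\mathcal{V}_1$ are therefore strictly less than $1$, i.e.\ $\lambda_1<1$ and $\lambda_2<1$. Combining this with $\lambda_1+\lambda_2\geq 1$ yields $\lambda_2\geq 1-\lambda_1>0$, hence $\lambda_1\geq\lambda_2>0$, and Theorem~\ref{thm:nonP} applies. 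Your outline is structurally correct up to the crux, but you should replace the appeal to genus-two positivity results by this spectral-gap bound, which is what makes the corollary hold for \emph{every} admissible $(\Lambda,P)$ with $\frac{1}{3}m_3+\frac{1}{2}m_4\leq 1$ rather than only for those where some criterion happens to be checkable.
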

\begin{proof}
By \eqref{eq:sumlambda}, we have
\[\lambda_1+\lambda_2\geq 2-\frac{1}{3}m_3-\frac{1}{2}m_4\geq 1.\]
Since both $\lambda_1,\lambda_2$ are strictly less than $1$, it follows that $\lambda_1,\lambda_2$ are positive and Theorem~\ref{thm:nonP} can be applied.
\end{proof}

Let us collect all triples $(m_1,m_3,m_4)$ and central symmetric polygons $P$ for which
\[m_3+2m_4-m_1=2 \text{ (see \eqref{Euler-Characteristic})} \quad\text{and}\quad\frac{1}{3}m_3+\frac{1}{2}m_4\leq 1\quad\text{ are valid.}\]
First note that
\[\frac{m_1+2}{4}=\frac{m_3+2m_4}{4}=\frac{1}{4}m_3+\frac{1}{2}m_4\leq\frac{1}{3}m_3+\frac{1}{2}m_4\leq 1,\]
so $0\leq m_1\leq 2$. Therefore, there are only five possibilities: $(0,0,1)$, $(0,2,0)$, $(1,1,1)$, $(1,3,0)$, $(2,0,2)$.

The case $(0,0,1)$ is not interesting. Then $P$ is degenerated to an interval the dynamics of the wind-tree model is clear: a typical orbit escapes to infinity.

If $(m_1,m_3,m_4)=(0,2,0)$ then $P$ is a rectangle and pass to so called periodic Ehrenfest wind-tree model. If $P=[0,a]\times[0,b]$ then the corresponding billiard table we will denote by $E(\Lambda,a,b)$.
The recurrence, ergodicity and diffusion times of standard Ehrenfest wind-tree model, with  $\Lambda=\Z^2$, were studied recently, see \cite{AH, Co-Gu, Del,
DHL, Fr-Ulc:nonerg, Ho1, Hu-Le-Tr}. In particular,  it was recently
shown that for every pair of parameters $(a,b)$ and almost every
direction $\theta$ the billiard flow on $E(\Z^2,a,b)$ is recurrent
and non-ergodic and its rate of diffusion is $t^{2/3}$. By Corollary~\ref{cor:fin}, we have non-ergodicity for any lattice $\Lambda\subset
\R^2$ and every choice of parameters $(a,b)$.

If $m_1=1$ then there are four possible shapes of the polygon $P$, see Figure~\ref{fig:m11}
\begin{figure}[h]
\includegraphics[width=0.8\textwidth]{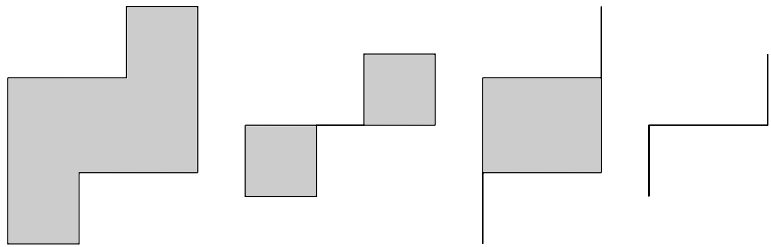}
\caption{Polygon $P$ for $(1,3,0)$ and $(1,1,1)$.}\label{fig:m11}
\end{figure}

If $m_1=2$ then there are also four possible shapes of the polygon $P$, see Figure~\ref{fig:m12}
\begin{figure}[h]
\includegraphics[width=0.8\textwidth]{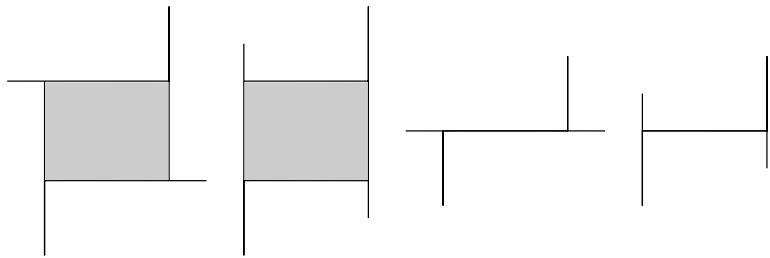}
\caption{Polygon $P$ for $(2,0,2)$.}\label{fig:m12}
\end{figure}

\section{Recurrence in general wind-tree model} \label{sect:rec-windtree}
In this section we study the problem of recurrence for a.e.\ directional billiard flow in general wind-tree model.
Recently, Avila and the second author proved recurrence for a.e.\ directional billiard flow on $E(\Z^2,a,b)$ for all $0<a,b<1$.
In Section~\ref{subsec:recehr}, we extend this result to lattices of the form  $\Lambda_\lambda:=(1,\lambda)\Z+(0,1)\Z$.

Section~\ref{subsec:concave} is devoted to the recurrence problem for polygons $P$ that have corners with full exterior angle $\pi/2$.

In both cases we will apply Corollary~\ref{cor:recgen}. We will look for specific (symmetric) cylinders in the surface  $M(\Lambda,P)$.
Such cylinders will have different origin in the above mentioned two models. In a sense, the existence of concave corners in a model
helps to the required cylinder.

\subsection{Ehrenfest wind-tree model}\label{subsec:recehr}
In this section we will prove recurrence in wind-tree model
$E(\Lambda,a,b)$ (for a.e.\ direction $\theta$)  for some families
of parameters $\lambda, a, b$. We deal with the lattices of the
form $\Lambda_\lambda:=(1,\lambda)\Z+(0,1)\Z$.

\begin{theorem}\label{thm:recehr}
For all $0<a,b<1$ and for every lattice $\Lambda_\lambda$, $\lambda\in\R$ the directional flow on $E(\Lambda_\lambda,a,b)$ is recurrent for a.e.\
direction.
\end{theorem}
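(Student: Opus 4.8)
The plan is to verify the hypotheses of Corollary~\ref{cor:recgen} for the surface $(M,\omega)=M(\Lambda_\lambda,a,b)$ with the splitting $H_1(M,\Q)=K_1\oplus K_0\oplus K^\perp$ introduced in Section~\ref{sec:genwt}, taking $K:=K_1$ as the invariant subspace carrying the cover. Recall that the $\Z^2$-cover $\widetilde{M}(\Lambda_\lambda,a,b)$ giving the billiard is determined by $\gamma=(\gamma_v,\gamma_h)$ with $\gamma_v=v_0-v_1,\ \gamma_h=h_0+h_1\in K_1\cap H_1(M,\Z)$, so $\gamma\in(K_1\cap H_1(M,\Z))^2$. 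By Corollary~\ref{cor:recgen}, recurrence for a.e.\ direction follows once I exhibit a single cylinder $C$ on $(M,\omega)$ whose core class $\sigma(C)$ lies in $K_1^\perp\cap H_1(M,\Z)$; the Birkhoff-genericity of $\pi/2-\theta$ holds for a.e.\ $\theta$ by Theorem~\ref{thm:esch}, and the recurrence conclusion transfers from the vertical flow at the cylinder's direction back to a.e.\ direction as in the proof of that corollary.

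The heart of the argument is therefore geometric: first I would produce an explicit cylinder on $M(\Lambda_\lambda,a,b)$ and then compute its core homology class. For the Ehrenfest model the obstacle $P=[0,a]\times[0,b]$ is a rectangle, and the lattice $\Lambda_\lambda=(1,\lambda)\Z+(0,1)\Z$ is \emph{sheared} only in the first generating vector. The natural candidate is a \emph{vertical} cylinder: because the second lattice vector is $(0,1)$, purely vertical, the vertical lines in the plane that avoid the vertical edges of the obstacles are unobstructed in the $(0,1)$-direction and close up into a cylinder $C$ on the quotient surface $M$. The key point is that this cylinder must be chosen \emph{symmetric} with respect to the involution $\zeta_1$ (equivalently, its core should be $\zeta_1$-anti-invariant), so that $(\zeta_1)_*\sigma(C)=-\sigma(C)$, which is exactly the condition $\sigma(C)\in K_1^\perp$.

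The step I expect to be the main obstacle is the homological bookkeeping that certifies $\sigma(C)\in K_1^\perp$ rather than merely in $H_1(M,\Z)$. Concretely, I would write $\sigma(C)$ in terms of the generators $v_{ij},h_{ij}$ attached to the four copies $00,01,10,11$ of $W(\Lambda_\lambda,P)$ that build $M$, then apply the explicit action of $\zeta_1=\zeta_0\circ\tau$ recorded in Section~\ref{sec:genwt}, namely $(\zeta_1)_*v_{00}=v_{11}$, $(\zeta_1)_*v_{01}=v_{10}$, etc. Since $K_1$ is spanned by $v_0,v_1,h_0,h_1$ (the $\zeta_1$-\emph{invariant} combinations), I must check that the core decomposes with no component along these four vectors, i.e.\ that $\langle\sigma(C),\xi\rangle=0$ for each symplectic basis vector of $K_1$; equivalently that $\sigma(C)$ pairs trivially with $\gamma_v$ and $\gamma_h$ and their symplectic partners. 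This is where the specific shear parameter $\lambda$ enters: I need the cylinder to persist and remain $\zeta_1$-symmetric for \emph{every} $\lambda\in\R$, which is plausible precisely because the shear preserves the vertical foliation. Once the core class is identified and the condition $\sigma(C)\in K_1^\perp\cap H_1(M,\Z)$ is verified, Corollary~\ref{cor:recgen} applies verbatim and yields recurrence for a.e.\ direction, for all $0<a,b<1$ and all $\lambda$, completing the proof.
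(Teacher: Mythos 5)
Your reduction is sound and matches the paper's frame: the paper likewise deduces Theorem~\ref{thm:recehr} from Corollary~\ref{cor:recgen} by exhibiting one cylinder whose core lies in the complement of the subspace containing $\gamma_v,\gamma_h$ (it uses the splitting $K=\{\xi:\tau_*\xi=-\xi\}$, $K^\perp=\{\xi:\tau_*\xi=\xi\}$ rather than your $K_1\oplus K_1^\perp$, an inessential difference). The genuine gap is your candidate cylinder: \emph{no vertical cylinder can satisfy the homological condition}, for any $\lambda$. Vertical free corridors do exist for every $\lambda$ --- the scatterers occupy the strips $x\in[1-a,1]+\Z$, since the shear moves only the $y$-coordinates --- but a vertical trajectory in a corridor never meets a scatterer edge, hence never changes copy, so the cylinder it sweeps lies in a single copy and its core is $\pm v_{ij}$. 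By your own criterion this fails: $\langle \gamma_h,v_{00}\rangle=\langle h_{00},v_{00}\rangle=\pm1\neq 0$ and $\langle h_0,v_{00}\rangle=\pm 1\neq 0$, so $\sigma(C)\notin K_1^\perp$. This is no accident: billiard orbits in a corridor drift linearly to infinity, so the lift of such a cylinder to $\widetilde{M}_\gamma$ is an infinite strip, not a cylinder --- the vertical direction is precisely a \emph{non-recurrent} direction for this model. The only other vertical cylinders, swept by orbits bouncing between the top of one scatterer and the bottom of the next in the same column, do pair trivially with $\gamma_v$ and $\gamma_h$ (they are bounded periodic billiard orbits), but their cores cross exactly one of the horizontal classes $h_{00},h_{11}$ once, so $\langle h_0,\sigma(C)\rangle=\pm1\neq 0$ again; nor is any vertical cylinder $\zeta_1$- or $\tau$-symmetric, since these involutions exchange the copy $00$ with $11$ and $01$ with $10$. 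Your guiding intuition that ``the shear preserves the vertical foliation'' selects exactly the wrong direction.

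The paper's actual proof constructs a cylinder in a carefully chosen \emph{non-vertical} direction, and this construction is the entire content of the theorem. For suitable integers $n,k$ it launches a billiard segment of slope $s=\bigl(n+b/2+k(1-b)\bigr)/(1-a/2)$ into the gap between the $(0,n)$-th and $(0,n+1)$-st scatterers of a column; the segment reflects $k$ times alternately off their facing horizontal sides and lands exactly at the \emph{center} of a scatterer side. By central symmetry the backward orbit performs the mirror trajectory, yielding a segment $S$ from a regular point $\bar x$ to $\tau(\bar x)$; then $S\cup\tau(S)$ is a $\tau$-invariant closed regular geodesic bounding a cylinder with $\tau_*\sigma(C)=\sigma(C)$, i.e.\ $\sigma(C)\in K^\perp$. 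The uniformity in $\lambda$ --- the real difficulty, which your proposal addresses only with the word ``plausible'' --- comes from a quantitative covering argument: the admissible starting heights form intervals $I_0,I_1$ of length greater than $\tfrac12-b$ centered at the integers and half-integers (Steps 0 and 1 of the paper's proof), so the gaps of $(I_0\cup I_1)+\Z$ are shorter than $b$, whence the scatterer side $[-b/2,b/2]-\lambda$ meets the admissible set for \emph{every} $\lambda\in\R$. Without such an explicit construction (the paper notes that an almost-every-$(\Lambda,a,b)$ statement is easy, while an everywhere statement would otherwise require classifying orbit closures), your argument does not close.
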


\begin{proof}
Let $M=M(\Lambda_\lambda,a,b):=M(\Lambda_\lambda,[0,a]\times[0,b])$. Let us consider the splitting
$H_1(M,\Q)=K\otimes K^\perp$ with
\[K:=\{\xi\in H_1(M,\Q):\tau_*\xi=-\xi\},\qquad K^\perp:=\{\xi\in H_1(M,\Q):\tau_*\xi=\xi\},\]
where $\tau:M\to M$ is an involution  defined in Section~\ref{sec:genwt}. This splitting is $SL(2,\R)$-invariant and $\gamma_h,\gamma_v\in K$.
Therefore, in view of Corollary~\ref{cor:recgen}, we need to find  a regular $\bar{x}\in M$ and a $S$ segment of a translation orbit
joining $\bar{x}$ and $\tau(\bar{x})$. Indeed,  $S$ closed up by $\tau(S)$ gives a periodic regular orbit which is $\tau$-invariant. This periodic orbit is
a boundary of a cylinder $C$ for which $\tau_*\sigma(C)=\sigma(C)$, this is $\sigma(C)\in K^\perp$, and  Corollary~\ref{cor:recgen} applies.

We will find the segment $S$ by looking for segments of billiard orbits on  $E(\Lambda_\lambda,a,b)$ such as in right parts on
Figures~\ref{fig:full0}~and~\ref{fig:full1}.
\begin{figure}[h]
\includegraphics[width=0.8\textwidth]{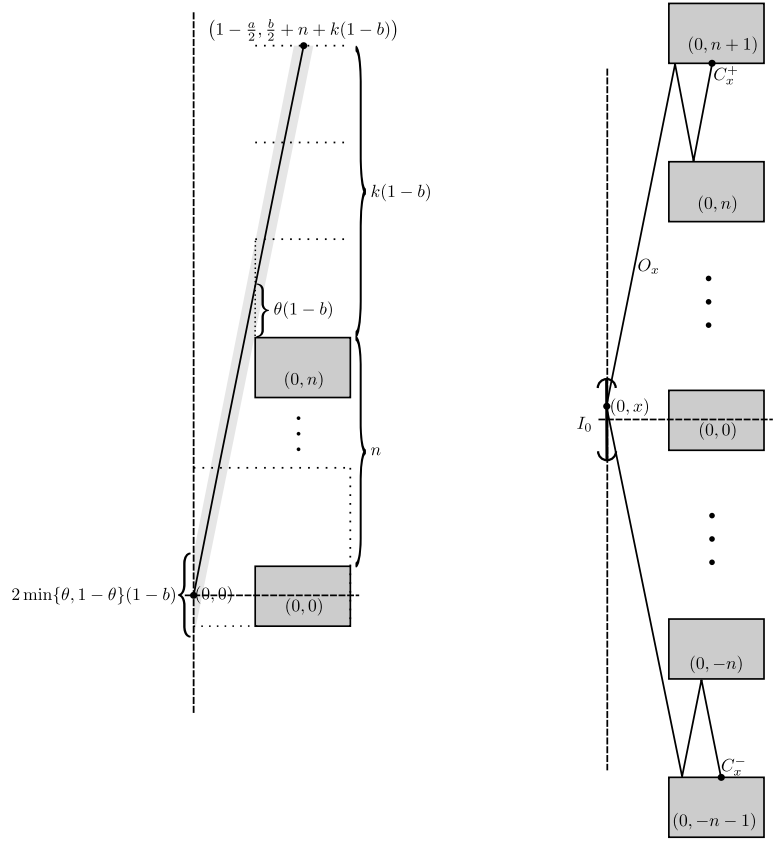}
\caption{The construction of segment $S$; step $0$.}\label{fig:full0}
\end{figure}
Fix $a,b\in(0,1)$. For convenience, assume that the lattice of rectangular scatterers consists of rectangles
\[[1-a,1]\times[-b/2,b/2]+(m_1,m_2+\lambda m_1)\quad\text{for}\quad (m_1,m_2)\in\Z^2.\]
Each such rectangle is labeled by $(m_1,m_2)\in \Z^2$, see on Figures~\ref{fig:full0}~and~\ref{fig:full1} where $0$-th column of the rectangles is drawn.

Fix $n_0\in\N$ such that
\begin{equation}\label{eq:zaln0}
n_0\geq\frac{2}{a}(1-b).
\end{equation}

\textbf{Step $0$.} Let us choose $n\geq n_0$, $k\geq 1$ and $0\leq\theta<1$ such that
\begin{equation}\label{def:nkt}
\gamma_1 k-\gamma_2 (n+b/2)=\theta\ \text{ with }\ \gamma_1:=\frac{2(1-a)}{2-a}\text{ and }\gamma_2:=\frac{a}{(2-a)(1-b)}.
\end{equation}
Since $0<\gamma_1<1$, the required triple $(n,k,\theta)$ does exist. Let $\gamma:=\min\{\gamma_1,\gamma_2\}$. Then a more careful choice yields a triple
$(n,k,\theta)$ with
\begin{equation}\label{ineq:theta}
\Big|\theta-\frac{1}{2}\Big|\leq \frac{\gamma}{2}.
\end{equation}
Moreover, since $\gamma_1+(1-b)\gamma_2=1$, we have
\begin{equation}\label{ineq:gamma}
\gamma\leq \frac{(1-b)\gamma_1+(1-b)\gamma_2}{2(1-b)}<\frac{\gamma_1+(1-b)\gamma_2}{2(1-b)}=\frac{1}{2(1-b)}.
\end{equation}

Let us consider the billiard orbit $O_0$ starting from the point $(0,0)$ with the slope \[s:=\frac{n+b/2+k(1-b)}{1-a/2},\] see Figure~\ref{fig:full0}.
Since, by \eqref{def:nkt},
\[\frac{n+b/2+k(1-b)}{1-a/2}(1-a)=n+b/2+\theta(1-b)\ \text{ and }\ \theta\in(0,1),\]
the orbit passes between the left sides of the $(0,n)$-th and the $(0,n+1)$-st rectangle and next it reflects alternately from the lower side of the
$(0,n+1)$-st rectangle and the upper side of the $(0,n)$-th rectangle. After $k$-th such reflection the orbit reaches the center, denoted by $(c_1,c_2)$,
of the lower side of the $(0,n+1)$-st rectangle (if $k$ is odd) or the upper side of the $(0,n)$-th rectangle (if $k$ is even), see Figure~\ref{fig:full0}.

Now let us change the starting point $(0,0)$ into $(0,x)$ with $x\in (-\theta(1-b),(1-\theta)(1-b))$. Then the billiard orbit $O_x$ flows parallel to $O_0$
until the $k$-th reflection and the distance in the horizontal direction between $O_x$ and  $O_0$ is
\[\frac{|x|}{s}\leq \frac{1-b}{s}=\frac{(1-a/2)(1-b)}{n+b/2+k(1-b)}\leq\frac{1}{2}\frac{(2-a)(1-b)}{n_0+b/2}.\]
Moreover, by \eqref{eq:zaln0},
\[n_0+b/2> \frac{2}{a}(1-b)+\frac{b-1}{2}=\frac{4-a}{2a}(1-b)\geq \frac{2-a}{a}(1-b).\]
Therefore, the horizontal distance $\frac{|x|}{s}\leq \frac{a}{2}$. It follows that after the $k$-th reflection $O_x$ reaches the point
\[C_x^+:=(c_1-x/s,c_2).\]

By symmetry, if \[x\in I_0:=\big(-\min\{\theta,1-\theta\}(1-b),\min\{\theta,1-\theta\}(1-b)\big)\] and  $(0,x)$ belongs to the right side of a rectangular
scatterer (from the $-1$-st column) then the backward orbit of $(0,x)$ after the $k$-th reflection  reaches the point
\[C_x^-:=(c_1+x/s,-c_2).\]
Note that the required point $x$ exists if the sets $\Z-\lambda+[-b/2,b/2]$ and $I_0$ have non-trivial intersection.

If $\bar{x}$ is the projection  of $C_x^-$ on $M(\Lambda_\lambda,a,b)$ then $\tau(\bar{x})$ is the projection of $C_x^+$. Therefore, the projection of the
segment of $O_x$ joining $C_x^-$ and $C_x^+$ yields the required linear segment $S$ joining $\bar{x}$ and $\tau(\bar{x})$ in $M(\Lambda_\lambda,a,b)$.
Moreover, in view of \eqref{ineq:theta} and \eqref{ineq:gamma}, we have
\begin{eqnarray*}|I_0|&=&2\min\{\theta,1-\theta\}(1-b)=2\Big(\frac{1}{2}-\big|\frac{1}{2}-\theta\big|\Big)(1-b)>(1-\gamma)(1-b)\\
&\geq&\Big(1-\frac{1}{2(1-b)}\Big)(1-b)=\frac{1}{2}-b.
\end{eqnarray*}
\begin{figure}[h]
\includegraphics[width=0.8\textwidth]{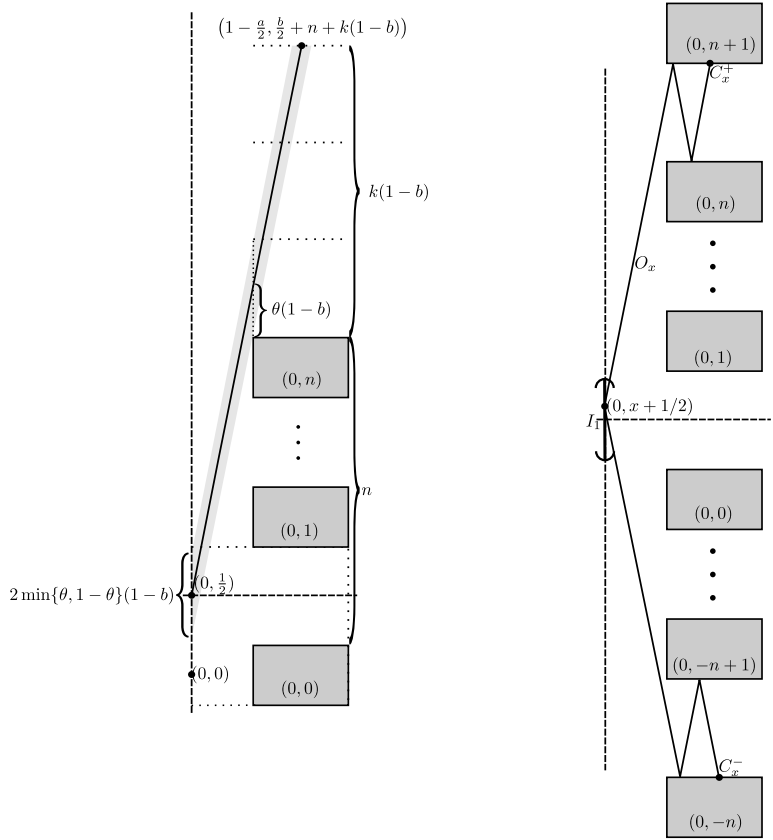}
\caption{The construction of segment $S$; step $1$.}\label{fig:full1}
\end{figure}

\textbf{Step $1$.} Similar constructions may be made changing the base point $(0,0)$ into $(0,1/2)$, see Figure~\ref{fig:full1}. Then we choose the triple
$(n,k,\theta)$ such that
\[\gamma_1k-\gamma_2(n+b/2-1/2)=\theta\ \text{ and }\ \big|\theta-\frac{1}{2}\big|\leq \frac{\gamma}{2}.\]
Moreover, the slope of the motion now is equal to
\[s:=\frac{n+b/2-1/2+k(1-b)}{1-a/2}.\]
Similar arguments to those from Step $0$ show that if
\[x+\frac{1}{2}\in I_1:=\frac{1}{2}+\big(-\min\{\theta,1-\theta\}(1-b),\min\{\theta,1-\theta\}(1-b)\big)\] and  $(0,x+1/2)$ belongs
to the right side of a rectangular scatterer then the forward and the backward orbit of $(0,x+1/2)$ (with the slope $s$)  after the $k$-th reflection
reaches the points $C_x^+:=(c_1-x/s,c_2)$ and $C_x^-:=(c_1+x/s,-c_2+1)$ respectively, where $(c_1,c_2)$ is the center of the lower side of the $(0,n+1)$-st
rectangle  or the upper side of the $(0,n)$-th rectangle, see Figure~\ref{fig:full1}. Note that the required point $x$ exists if the sets
$\Z-\lambda+[-b/2,b/2]$ and $I_1$ have non-trivial intersection. Moreover,
\[|I_1|=2\min\{\theta,1-\theta\}(1-b)>(1-\gamma)(1-b)\geq\frac{1}{2}-b.\]
Then the projection (on $M(\Lambda_\lambda,a,b)$) of the segment of the billiard orbit joining $C_x^-$ with $C_x^+$ is the required segment $S$.

\textbf{Final step.} As we have already proved, the required segment $S$ exists whenever the interval $[-/b/2,b/2]-\lambda$ intersects the set $I:=(I_0\cup
I_1)+\Z$. The set $I$ is the union of open intervals centered at points $m$, $m+1/2$ for all $m\in\Z$ and the length of each interval is greater than
$1/2-b$. It follows that the distance between consecutive intervals is less than $b$. Therefore, $[-/b/2,b/2]-\lambda$ always intersects the set $I$ which
completes the proof.
\end{proof}

\subsection{Ehrenfest models with suckers}\label{subsec:concave}
In this section we deal with billiard table $W(\Lambda,P)$ when $P$ has at least one concave corner (the first interesting example was discovered by Panov in \cite{Pa}).
The existence of such concave corners sometimes helps to find required cylinder which are need to prove recurrence, cf.\  Corollary~\ref{cor:recgen}.
More precisely, in this section we will prove recurrence (for a.e.\ direction) for three model formed form Ehrenfest model by attaching to the rectangular scatterers
some linear suckers. We will deal polygons presented on Figure~\ref{fig:rectwithsuck}.
\begin{figure}[h]
\includegraphics[width=1\textwidth]{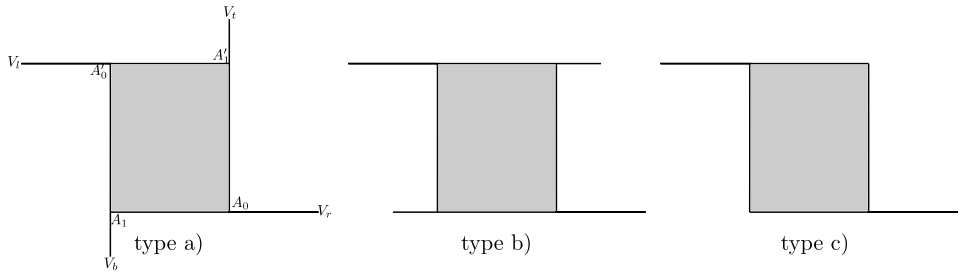}
\caption{Rectangles with suckers; types a), b) and c).}\label{fig:rectwithsuck}
\end{figure}

We start from a simple general observation that indicates how the existence of concave corners can help to prove recurrence.
Suppose that  $P$ is a centrally symmetric polygon. For every $Q\in P$ we denote by $Q'\in P$
the image of $Q$ by the central symmetry of $P$. In this section we will always assume that $0$ is the center of symmetry of $P$.

\begin{lemma}\label{lem:cyl_rec}
Suppose that $C$ is a concave corner of $P$ and there is a billiard segment $S$ in $W(\Lambda,P)$ joining $C$ and $C'+\rho$ for some $\rho\in \Lambda$
and such that $S$ does not meet any other corner in $W(\Lambda,P)$. Then for a.e.\
direction $\theta\in S^1$ the directional billiard flow
$(b^\theta_t)_{t\in\R}$ on $W(\Lambda,P)$ is recurrent.
\end{lemma}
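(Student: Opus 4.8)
The plan is to apply Corollary~\ref{cor:recgen} with exactly the symplectic splitting used in the proof of Theorem~\ref{thm:recehr}, namely
\[K=\{\xi\in H_1(M,\Q):\tau_*\xi=-\xi\},\qquad K^\perp=\{\xi\in H_1(M,\Q):\tau_*\xi=\xi\},\]
where $\tau:M\to M$ is the translation involution of Section~\ref{sec:genwt} (interchanging the parts $00\leftrightarrow 11$ and $10\leftrightarrow 01$). As there, $\gamma_v,\gamma_h\in K$ and the splitting is $SL(2,\R)$-invariant; moreover $\tau^*\omega=\omega$ gives $\hol_\omega\circ\tau_*=\hol_\omega$, whence for $\xi\in K$ one has $\hol_\omega(\xi)=\hol_\omega(\tau_*\xi)=-\hol_\omega(\xi)$, i.e.\ $\hol_\omega(\xi)=0$, so $K\subset\ker(\hol_\omega)$ and Corollary~\ref{cor:recgen} applies. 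It therefore suffices to exhibit a single cylinder on $(M,\omega)=M(\Lambda,P)$ whose core is $\tau$-invariant (lies in $K^\perp$): then for $\pi/2-\theta$ Birkhoff generic, which holds for a.e.\ $\theta$, the flow $(\widetilde\varphi^\theta_t)_{t\in\R}$ on $\widetilde M(\Lambda,P)_\gamma$ is recurrent, and by the unfolding of Section~\ref{sec:genwt} this is the directional billiard flow $(b^\theta_t)_{t\in\R}$ on $W(\Lambda,P)$.

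The construction of this cylinder is where the concave corner enters. A concave corner has full exterior angle $\pi/2$, so $W(\Lambda,P)$ has a wedge of angle $\pi/2$ there; under the four-fold unfolding the four wedges glue into a single cone point of total angle $4\cdot\pi/2=2\pi$. Hence $C$ lifts to a regular point $\bar x\in M$ (one of the $2m_1$ fake singularities of degree $0$). I would next identify the $\tau$-image of $\bar x$: since $\tau$ is a non-trivial translation automorphism it is fixed-point free on the regular locus, and because the part $11$ is the centrally symmetric copy of the part $00$, $\tau$ carries the lift of $C$ to the lift of its central-symmetry partner $C'$, i.e.\ $\tau(\bar x)=\overline{x'}$, the regular point obtained by lifting $C'$. (This is consistent with $\tau^2=\mathrm{id}$, as $(C')'=C$.)

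Finally I would lift the hypothesised billiard segment $S$. Because $S$ joins $C$ to $C'+\rho$ with $\rho\in\Lambda$ and meets no other corner, and because translation by $\rho\in\Lambda$ acts trivially on the quotient $M$, its unfolding is a geodesic segment $\widetilde S$ on $M$ running from $\bar x$ to the lift of $C'$, that is from $\bar x$ to $\tau(\bar x)$, and avoiding every singularity in its interior. This places us in the situation of the proof of Theorem~\ref{thm:recehr}: since $\tau^*\omega=\omega$ preserves the direction and $\bar x,\tau(\bar x)$ are regular points of cone angle $2\pi$, the concatenation $\widetilde S*\tau(\widetilde S)$ passes smoothly through both endpoints and is a $\tau$-invariant closed regular geodesic, hence the core of a cylinder $C_{\mathrm{cyl}}$ with $\tau_*\sigma(C_{\mathrm{cyl}})=\sigma(C_{\mathrm{cyl}})\in K^\perp$. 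Corollary~\ref{cor:recgen} then yields recurrence for a.e.\ $\theta$.

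The main obstacle I expect is the second step: verifying carefully, from the unfolding combinatorics and the cone-angle count, that a concave corner lifts to a single regular point and that $\tau$ sends it to the lift of $C'$ rather than to another lift of $C$. This fixed-point-free behaviour of $\tau$, forced by $\tau^*\omega=\omega$ together with $\tau\neq\mathrm{id}$, is exactly what makes $\widetilde S$ join a point to its $\tau$-image and thereby produce a core curve in $K^\perp$; the remaining steps follow the template of Theorem~\ref{thm:recehr}.
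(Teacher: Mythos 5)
Your proposal is correct, and its skeleton coincides with the paper's: the same orthogonal splitting $H_1(M,\Q)=K\oplus K^\perp$ into $\tau$-anti-invariant and $\tau$-invariant parts, the same appeal to Corollary~\ref{cor:recgen}, and the same mechanism of producing a $\tau$-invariant closed regular geodesic from a linear segment joining a regular point to its $\tau$-image, exactly as at the beginning of the proof of Theorem~\ref{thm:recehr}. Where you genuinely diverge is in the one step specific to this lemma, namely how to handle the fact that $S$ ends at corners. The paper never lifts $S$ itself: it chooses $\vep>0$ so that the $\vep$-neighborhood of $S$ meets no other corner, takes a point $C_\vep$ on a side adjacent to $C$, and uses the concavity of the corner to see that the parallel billiard orbit makes two reflections off the two sides meeting at $C'+\rho$ and lands exactly at $C'_\vep+\rho$; the resulting segment $S_\vep$ joins regular boundary points, and its projection to $M(\Lambda,P)$ joins a regular point to its $\tau$-image. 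You instead lift $S$ directly, letting concavity enter through the cone-angle count: the exterior angle $\pi/2$ unfolds to $4\cdot\pi/2=2\pi$, so $C$ lifts to one of the $2m_1$ marked points of degree $0$ in $\mathcal{H}(0^{2m_1},1^{4m_4},2^{2m_3})$, through which the lifted geodesic extends smoothly since $\tau$ preserves directions. Your route buys a cleaner argument with no perturbation or reflection bookkeeping; the paper's perturbation buys a closed geodesic avoiding the marked points altogether, whereas yours passes through $\bar x$ and $\tau(\bar x)$, so you should add the one-line remark that degree-zero points are flat-regular, hence nearby parallel closed geodesics avoid them and bound an honest cylinder whose core class is still $[\widetilde S\ast\tau(\widetilde S)]\in K^\perp$. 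Two further touch-ups: the identification $\tau(\bar x)=\overline{x'}$ follows not from fixed-point-freeness (which only rules out $\tau(\bar x)=\bar x$) but from the fact that $\tau$ is induced by the central symmetry of the table composed with the swap of copies $00\leftrightarrow 11$, $10\leftrightarrow 01$, which carries the lift of any table point to the lift of its centrally symmetric partner, the translation by $\rho\in\Lambda$ dying in the projection to $M$; and note that the hypothesis allows $S$ to reflect off scatterers, which is harmless since unfolding straightens all reflections, so $\widetilde S$ is indeed a single linear segment of constant direction.
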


\begin{proof}
Choose $\vep>0$ small enough such that the $\vep$-neighborhood of $S$ still does not meet any other corner in $W(\Lambda,P)$.
Let us take any element $C_\vep$ of a side coming from the corner $C$ and lying in the neighborhood. Then the billiard orbit
starting from $C_\vep$ goes parallel to $S$ and finally after two reflections reaches the point $C'_\vep+\rho$, see Figure~\ref{fig:concave}.
\begin{figure}[h]
\includegraphics[width=0.7\textwidth]{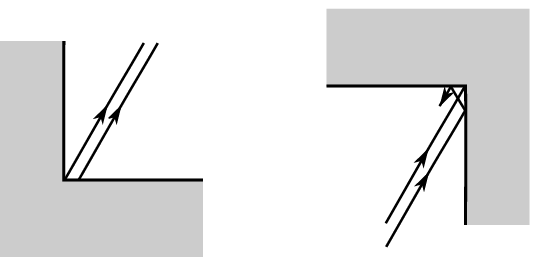}
\caption{Segments $S$ and $S_\vep$.}\label{fig:concave}
\end{figure}
Let us denote by $S_\vep$ the billiard orbit segment joining $C_\vep$ and $C'_\vep+\rho$. Its projection on $M(\Lambda,P)$ is a linear regular segment joining
a regular point on $M(\Lambda,P)$ and its image via $\tau$. The rest of the proof runs as at the beginning of the proof of Theorem~\ref{thm:recehr}.
\end{proof}

First assume that $P$ is a rectangle with suckers of type a), see Figure~\ref{fig:rectwithsuck}.
Let us denote by $V_r,V_l,V_t,V_b$ the convex and by $A_0,A'_0,A_1,A'_1$ the concave corners of $P$ as in Figure~\ref{fig:rectwithsuck}.
For every pair $(A_i,A'_i+\rho)$ ($\rho\in\Lambda$ and $i=0,1$) with
\[\Re(A_i'+\rho-A_i)>0\ \text{ and }\ (-1)^i\Im(A_i'+\rho-A_i)>0,\] denote by $R(A_i,A'_i+\rho)$
the open rectangle (with vertical and horizontal sides) with opposite corners $A_i$, $A'_i+\rho$. We call a pair $(A_i,A'_i+\rho)$ \emph{free} if $R(A_i,A'_i+\rho)$ does not intersect any polygon $P+\rho'$ for $\rho'\in\Lambda$, see Figure~\ref{fig:free_rect}.
\begin{figure}[h]
\includegraphics[width=0.5\textwidth]{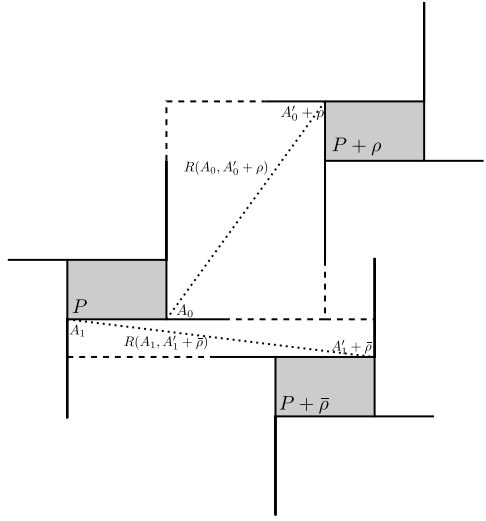}
\caption{Rectangles $R(A_0,A'_0+\rho)$ and $R(A_1,A'_1+\bar{\rho})$.}\label{fig:free_rect}
\end{figure}
\begin{remark}\label{rem:rec}
If a billiard table $W(\Lambda,P)$ has a free pair $(A_i,A'_i+\rho)$ then the corners $A_i$, $A'_i+\rho$ are joined by a straight billiard segment and, by Lemma~\ref{lem:cyl_rec}, a.e.\ direction on $W(\Lambda,P)$ is recurrent.
\end{remark}

\begin{lemma}\label{lem:typeA}
If $P$ is a rectangle with suckers of type a) then for every $P$-admissible lattice $\Lambda$ the billiard table $W(\Lambda,P)$ has a free pair.
\end{lemma}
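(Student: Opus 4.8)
The plan is to reduce the existence of a free pair to an extremal ``smallest rectangle'' argument and then to rule out blocking copies of $P$ using $P$-admissibility together with the concrete shape of type a). First I would normalize so that the centre of symmetry of $P$ is the origin, whence $A'_i=-A_i$ for $i=0,1$. Since $P$ has only horizontal and vertical sides, each concave corner has interior angle $3\pi/2$, so in a small neighbourhood the billiard table $W(\Lambda,P)$ occupies exactly one open axis-parallel quadrant. After relabelling, the table near $A_0$ occupies $Q:=\{z\in\C:\Re z>0,\ \Im z>0\}$, and by central symmetry the table near $A'_0=-A_0$ occupies the opposite quadrant $-Q$; translating, near $A'_0+\rho$ it occupies $(A'_0+\rho)-Q$. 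Consequently, whenever the displacement $v_\rho:=(A'_0+\rho)-A_0=\rho-2A_0$ lies in $Q$ — which is exactly the sign condition $\Re(A'_0+\rho-A_0)>0$, $\Im(A'_0+\rho-A_0)>0$ defining a free pair for $i=0$ — the two opposite corners $A_0$ and $A'_0+\rho$ of the axis-parallel rectangle $R(A_0,A'_0+\rho)$ lie in the billiard table, and $R$ enters the table at each of them.

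Next I would produce a candidate rectangle by an extremal choice. The set $\{\rho\in\Lambda:v_\rho\in Q\}=\Lambda\cap(2A_0+Q)$ consists of the points of $\Lambda$ in a translate of an open quadrant, hence is infinite since $\Lambda$ is a full-rank lattice. Because $\Re v_\rho$ and $\Im v_\rho$ are positive and any bounded region contains only finitely many points of $\Lambda$, the values $v_\rho$ admit a coordinatewise-minimal element: pick any $\rho_0$ in the set, note that only finitely many $\rho$ have $\Re v_\rho\le\Re v_{\rho_0}$ and $\Im v_\rho\le\Im v_{\rho_0}$, and among these choose $\rho^*$ minimal for the coordinatewise order. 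By minimality the open rectangle $R:=R(A_0,A'_0+\rho^*)$ contains in its interior no further translate $A'_0+\rho''$ with $v_{\rho''}\in Q$, since any such point would satisfy $0<\Re v_{\rho''}<\Re v_{\rho^*}$ and $0<\Im v_{\rho''}<\Im v_{\rho^*}$, contradicting the choice of $\rho^*$.

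It then remains to show that this minimal rectangle is in fact free, and this is the main obstacle. Suppose $R$ meets some copy $P+\rho'$. By $P$-admissibility all copies are pairwise disjoint, and since $A_0\in\partial P$ while $A'_0+\rho^*\in\partial(P+\rho^*)$, the polygon $P+\rho'$ contains neither of the two corners of $R$, so its intersection with $R$ is a nonempty region bounded by horizontal and vertical segments. Tracing the boundary of $P+\rho'$ inside $R$ and invoking the concrete description of a rectangle-with-suckers of type a) — that is, cataloguing which convex corners, which concave corners, and which sucker of a type a) polygon can protrude into $R$ — I would locate a concave corner of $P+\rho'$ of the same type as $A'_0$, situated strictly inside $R$ in the quadrant position relative to $A_0$; this yields an admissible $v_{\rho''}$ strictly below $v_{\rho^*}$, contradicting minimality. (A convex corner or the opposite concave corner cannot cross $R$ without forcing the appearance of such a corner first, again by disjointness.) This case analysis of how a type a) polygon can enter $R$ is exactly where the shape hypothesis is used, and is the delicate step; the argument for $i=1$ is identical after a reflection. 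This exhibits a free pair, which is the assertion of Lemma~\ref{lem:typeA}, and hence, by Remark~\ref{rem:rec}, recurrence of $(b^\theta_t)_{t\in\R}$ on $W(\Lambda,P)$ for a.e.\ direction.
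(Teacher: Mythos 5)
Your proposal correctly sets up the reduction (normalizing $A_i'=-A_i$, identifying the sign condition with the quadrant condition on $v_\rho=\rho-2A_0$) and the coordinatewise-minimal choice of $\rho^*$ is sound as far as it goes: minimality does rule out any translate $A_0'+\rho''$ of the \emph{relevant concave corner} lying strictly inside $R(A_0,A_0'+\rho^*)$. But the lemma is about freeness, i.e.\ that \emph{no part} of any scatterer meets $R$, and this is exactly the step you leave as a plan. The sketched case analysis does not close: a copy $P+\rho'$ can intersect $R$ through its top or bottom side with only convex corners (or a sucker edge) protruding into $R$, or can cross $R$ entirely as a horizontal band when $R$ is narrower than $P$, and in these configurations no concave corner of the type $A_0'$ lands inside $R$, so coordinatewise minimality of $v_{\rho^*}$ produces no contradiction. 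Your parenthetical claim that a convex corner ``cannot cross $R$ without forcing the appearance of such a corner first'' is precisely the assertion that would need proof, and it is false as stated for partial intersections across the sides of $R$. Since you yourself flag this as ``the main obstacle,'' the proposal has a genuine gap at its central step.

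It is instructive to compare with how the paper gets around this. Instead of a coordinatewise-minimal lattice vector, the paper chooses $\rho$ by a first-hit minimization along the horizontal half-line $\ell_r$ emanating from the convex corner $V_r$. This choice buys two things your extremal choice does not: (i) the resulting rectangle $R(A_0,A_0'+\rho)$ has height strictly less than the height of $P$, so any blocking copy $P+\rho'$ cannot sit inside the horizontal band of $R$ and must cross the top or the bottom side of $R$; (ii) by central symmetry of the configuration about the center of $R$ (a consequence of $P=-P$ and the lattice structure), one may assume the crossing is through the bottom side, which lies \emph{on} $\ell_r$ — and then $Q(\rho')$ would be a first hit closer to $V_r$ than $Q(\rho)$, contradicting the minimality of the half-line choice. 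The degenerate case $Q(\rho)=V_l+\rho$ requires a separate argument: the paper passes to the larger rectangle $R(A_0,A_0'+\rho+\bar\rho)$ whose boundary is scatterer-free, so that $\rho'\in R\Leftrightarrow P+\rho'\subset R$, and then a nearest-lattice-point argument inside $R$ produces the free pair. If you want to salvage your approach, you would essentially have to re-import these ingredients (a height bound on $R$ forcing boundary crossings through specific sides, plus an extremal choice adapted to those sides), at which point you have reconstructed the paper's proof.
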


\begin{proof}
Denote by $\ell_r$ the horizontal infinite half-line located to the right of  $V_r$, see Figure~\ref{fig:halfline}.
Let us consider all polygons $P+\rho$, $\rho\neq 0$ such that $\ell_r$ meets $P+\rho$ and denote by $Q(\rho)$ the first meeting point.
Finally choose $\rho\neq 0$ so that the distance between $V_r$ and $Q(\rho)$ is the smallest.
\begin{figure}[h]
\includegraphics[width=0.5\textwidth]{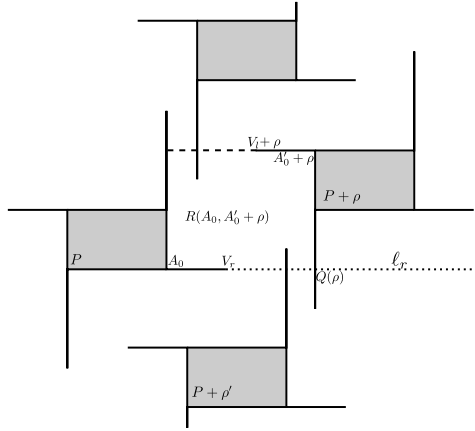}
\caption{Free pair in Case 1.}\label{fig:halfline}
\end{figure}

\textbf{Case 1.} Suppose that $Q(\rho)\neq V_l+\rho$, this is $\Im (V_l+\rho)\neq \Im V_r$. We will only show that $\Im (V_l+\rho)> \Im V_r$ implies $(A_0,A_0'+\rho)$ is free. A symmetric argument shows also that
$\Im (V_l+\rho)< \Im V_r$ yields $(A_1,A_1'+\rho)$ is free. Suppose, contrary to our claim, that $(A_0,A_0'+\rho)$ is not free and $P+\rho'$ intersects $R(A_0,A_0'+\rho)$. Since the height of $R(A_0,A_0'+\rho)$
is less than the height of $P$, $P+\rho'$ have to intersect the top or the bottom side of $R(A_0,A_0'+\rho)$. As the center of symmetry of $R(A_0,A_0'+\rho)$ is a center of symmetry of the lattice $\Lambda$,
it follows that there exists $P+\rho'$ intersecting the bottom side. The bottom side of $R(A_0,A_0'+\rho)$
is contained in the half-line $\ell_r$, so it follows that $Q(\rho')$ lies between $V_r$ and $Q(\rho)$. This contradicts the fact that the distance between $V_r$ and $Q(\rho)$ is  minimal.
\begin{figure}[h]
\includegraphics[width=0.5\textwidth]{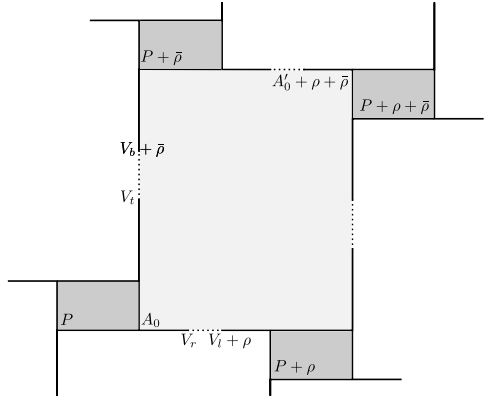}
\caption{Rectangle $R$.}\label{fig:shaded_rect}
\end{figure}

\textbf{Case 2.} Assume that $Q(\rho)=V_l+\rho$. Then the interval between $V_r$ and $V_l+\rho$ is horizontal and does not meet any polygon $P+\rho'$ for $\rho'\in\Lambda$.
We can also assume that there exists $\bar{\rho}\in\Lambda$ such that the interval between $V_t$ and $V_b+\bar{\rho}$ is vertical does not meet any polygon $P+\rho'$ for $\rho'\in\Lambda$. Otherwise,
we can proof the existence of a free pair using the vertical half-line located above the point $V_t$ and proceeding along the same lines as in Case 1.

Let $R:=R(A_0,A_0'+\rho+\bar{\rho})$, see Figure~\ref{fig:shaded_rect} (lightly shaded area).
Since the boundary of $R$ does not intersect polygons $P+\rho'$ for $\rho'\neq 0,\rho,\bar{\rho},\rho+\bar{\rho}$, for every $\rho'\in\Lambda$ we have
\[\rho'\in R\Leftrightarrow P+\rho'\subset R.\]

If $\Lambda\cap R=\emptyset$ then there is no $\rho'\in\Lambda$ such that $P+\rho'\subset R$. Then the pair $(A_0,A_0'+\rho+\bar{\rho})$ is free.

Next suppose that $\Lambda\cap R\neq\emptyset$. Choose $\widehat{\rho}\in \Lambda\cap R$ with the minimal distance to $0$. Let us consider the rectangle $R(A_0,A_0'+\widehat{\rho})$, see Figure~\ref{fig:shaded_rect2} (lightly shaded area).
\begin{figure}[h]
\includegraphics[width=0.5\textwidth]{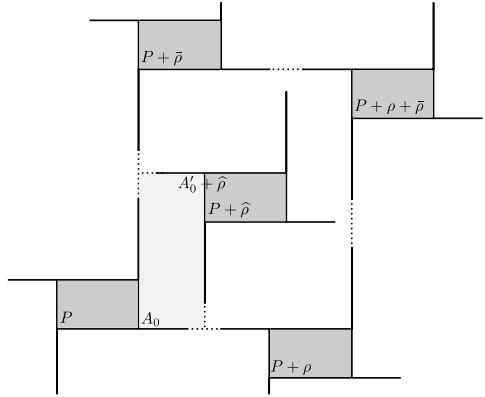}
\caption{Rectangle $R(A_0,A'_0+\rho)$.}\label{fig:shaded_rect2}
\end{figure}
Since $R(A_0,A_0'+\widehat{\rho})=R\cap(R+\widehat{\rho}-\rho-\bar{\rho})$, we have
\[\rho'\in R(A_0,A_0'+\widehat{\rho})\Leftrightarrow P+\rho'\subset R(A_0,A_0'+\widehat{\rho}).\]
By the minimality of the distance from $\widehat{\rho}$ to $0$, the set $R(A_0,A_0'+\widehat{\rho})\cap \Lambda$ is empty. Therefore,
$R(A_0,A_0'+\widehat{\rho})$ does not meet any polygon $P+\rho'$, so the pair $(A_0,A_0'+\widehat{\rho})$ is free, which completes the proof.
\end{proof}

\begin{lemma}\label{lem:typeC}
If $P$ is a rectangle with suckers of type b) or c) then for every $P$-admissible lattice $\Lambda$ the billiard table $W(\Lambda,P)$ has a free pair.
\end{lemma}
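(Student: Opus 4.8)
The plan is to reduce, exactly as for type a), to the production of a \emph{free pair} of concave corners. By Remark~\ref{rem:rec} (equivalently, by Lemma~\ref{lem:cyl_rec}), once we exhibit a concave corner $A_0$ of $P$ and a lattice vector $\rho\in\Lambda$ such that the open axis-parallel rectangle $R(A_0,A_0'+\rho)$ meets no translate $P+\rho'$, the corners $A_0$ and $A_0'+\rho$ are joined by a straight billiard segment, and recurrence for a.e.\ direction follows from Corollary~\ref{cor:recgen}. For types b) and c) the polygon $P$ has a single pair of concave corners $A_0,A_0'$ (the $m_1=1$ case), so it suffices to exhibit one admissible $\rho$, i.e.\ a single free pair.

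First I would fix the geometry of the two sucker shapes in Figure~\ref{fig:rectwithsuck} and record the quadrant into which $A_0$ opens; the point is that a concave corner is the apex of a right-angled wedge pointing into the billiard table, so a single choice of signs fixes which translate $A_0'+\rho$ can close up a free rectangle. I would then run the nearest-obstacle argument of Lemma~\ref{lem:typeA}: from a distinguished outer corner of the sucker (a convex corner, or the sucker tip in the case with spikes) shoot an axis-parallel half-line $\ell$ in the direction into which $A_0$ opens, consider all translates $P+\rho$ ($\rho\neq 0$) that $\ell$ meets, let $Q(\rho)$ be the first point of $\ell\cap(P+\rho)$, and pick $\rho$ minimizing the distance from the distinguished corner to $Q(\rho)$.

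The case analysis then mirrors Lemma~\ref{lem:typeA}. In the generic case the first contact occurs away from the corner of $P+\rho$ that is central-symmetric to the distinguished corner; then, since the height (resp.\ width) of $R(A_0,A_0'+\rho)$ is strictly smaller than that of $P$, any translate $P+\rho'$ meeting $R(A_0,A_0'+\rho)$ must cross the side of $R$ lying on $\ell$, producing a contact point strictly closer than $Q(\rho)$ and contradicting minimality; hence the pair is free. In the boundary case $\ell$ first hits the symmetric corner, and this is the main obstacle: the direct rectangle may be blocked, so as in Case~2 of Lemma~\ref{lem:typeA} I would combine a horizontal and a vertical nearest-obstacle vector into a doubly-translated rectangle $R=R(A_0,A_0'+\rho+\bar\rho)$, exploit the equivalence $\rho'\in R\Leftrightarrow P+\rho'\subset R$ (valid because $\partial R$ avoids the relevant translates), and then shrink to $R(A_0,A_0'+\widehat\rho)$ for $\widehat\rho\in\Lambda\cap R$ nearest to $0$. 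Minimality of $\widehat\rho$ forces $R(A_0,A_0'+\widehat\rho)\cap\Lambda=\varnothing$, so that rectangle is free.

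The delicate points I expect to cost the most are geometric bookkeeping rather than new ideas: verifying that the auxiliary vertical half-line construction (the one disposed of ``along the same lines as in Case~1'' of Lemma~\ref{lem:typeA}) is genuinely available for both sucker shapes, and checking that the particular profiles of types b) and c) in Figure~\ref{fig:rectwithsuck} never allow a translate $P+\rho'$ to intrude through a corner of $R$ instead of through the side on $\ell$. This last verification is where the specific shapes must be used, and where the argument for b) and for c) may diverge slightly even though the overall scheme is identical.
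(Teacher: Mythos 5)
Your generic case matches the paper exactly: shoot the horizontal half-line $\ell_r$ from the tip $V_r$, take the first scatterer $P+\rho$ met by $\ell_r$, and if the first contact $Q(\rho)$ is not the symmetric tip $V_l+\rho$, the Case-1 argument of Lemma~\ref{lem:typeA} gives the free pair $(A_0,A_0'+\rho)$, whence recurrence via Remark~\ref{rem:rec}. The gap is in the degenerate case $Q(\rho)=V_l+\rho$, precisely where you propose to import the Case-2 machinery of Lemma~\ref{lem:typeA}. That machinery is structurally unavailable for types b) and c): these polygons carry suckers only in the horizontal direction (the paper's proof names only the tips $V_r,V_l$, with concave corners $A_0,A_0',A_1,A_1'$ at the sucker bases --- two pairs, incidentally, not the single pair of your ``$m_1=1$'' reading of the shapes). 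Case 2 needs, first, the vertical fallback (a Case-1 argument along a vertical half-line above a tip $V_t$), which does not exist when there are no vertical suckers; and second, the equivalence $\rho'\in R\Leftrightarrow P+\rho'\subset R$ for the doubly-translated rectangle $R=R(A_0,A_0'+\rho+\bar\rho)$, whose justification requires all four sides of $R$ to be shielded by pairs of collinear suckers closed up by the zero-width free intervals between aligned tips --- a frame that exists in type a) exactly because it has suckers in both directions. Without vertical suckers, the vertical gaps bounding $R$ are positive-width corridors through which a third translate can intrude, so neither the equivalence nor the subsequent minimal-lattice-point shrinking to $R(A_0,A_0'+\widehat\rho)$ can be justified. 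The worry you flag at the end (whether the auxiliary vertical half-line construction is ``genuinely available'') is therefore not geometric bookkeeping: it is exactly the point where your argument fails, and you offer no resolution.

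The paper's resolution of the degenerate case is a different and much lighter idea that your plan misses: when $Q(\rho)=V_l+\rho$, the suckers of $P$ and $P+\rho$ become collinear along $\ell_r$ and act as a single horizontal barrier, so one simply takes the \emph{second} scatterer $P+\bar\rho$ intersecting $\ell_r$ and reruns the Case-1 nearest-obstacle argument with $\bar\rho$ in place of $\rho$, producing the free pair $(A_0,A_0'+\bar\rho)$ (see Figure~\ref{fig:typeC1}); no vertical construction, no doubly-translated rectangle, and no lattice-point minimization is involved. To repair your proof you would need either to find this second-scatterer argument or to supply a genuinely new substitute for the missing vertical frame; as written, the degenerate case --- for both type b) and type c) --- is open.
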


\begin{proof}
We will proceed only with the case c). The proof for the type b) runs similarly and we leave it to the reader.
 \begin{figure}[h]
\includegraphics[width=0.7\textwidth]{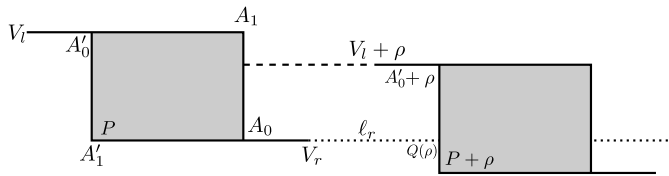}
\caption{Polygon $P$ in the case c).}\label{fig:typeC}
\end{figure}
Denote by $V_r,V_l,A_0,A'_0,A_1,A'_1$ the corners of $P$ as in Figure~\ref{fig:typeC}. Denote by $\ell_r$ the horizontal infinite half-line located to the right of  $V_r$.
Take $\rho\in\Lambda$ so that $P+\rho$ is the first scatterer that meets $\ell_r$ and let $Q(\rho)\in P+\rho$ be the point of the first meeting. If $Q(\rho)\neq V_l+\rho$ then
similar arguments to those applied in  the proof (Case 1) of Lemma~\ref{lem:typeA} show that the pair $(A_0,A_0'+\rho)$ is free, see Figure~\ref{fig:typeC}.

Suppose that $Q(\rho)=V_l+\rho$, see Figure~\ref{fig:typeC1}.
\begin{figure}[h]
\includegraphics[width=1\textwidth]{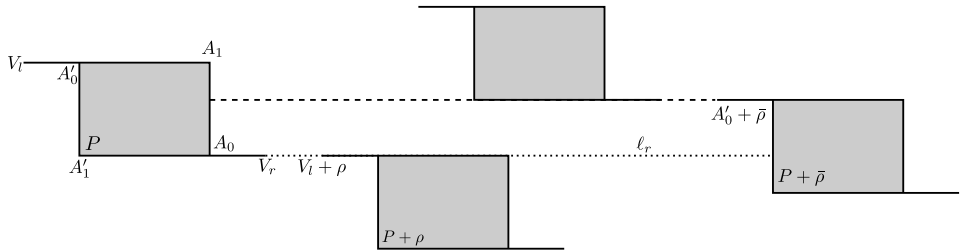}
\caption{The case $Q(\rho)=V_l+\rho$.}\label{fig:typeC1}
\end{figure}
Let $\bar{\rho}\in\Lambda$ so that $P+\bar{\rho}$ is the second scatterer intersecting $\ell_r$, see Figure~\ref{fig:typeC1}. Then
similar arguments to those applied in the proof (Case 1) of Lemma~\ref{lem:typeA} show also that the pair $(A_0,A_0'+\bar{\rho})$ is free, see Figure~\ref{fig:typeC1}.
This completes the proof for the type c).
\end{proof}

Finally, Lemmas~\ref{lem:typeA} and \ref{lem:typeA} combined with Remark~\ref{rem:rec} give the following conclusion.

\begin{corollary}
If $P$ is a rectangle with suckers of type a), b) or c) then for every $P$-admissible lattice $\Lambda$  a.e.\ direction on the billiard table $W(\Lambda,P)$ is recurrent.
\end{corollary}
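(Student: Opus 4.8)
The plan is to reduce the statement to the existence of a free pair of concave corners, a notion for which the requisite packing arguments have already been carried out case by case. First I would split according to the three admissible shapes of $P$. When $P$ is a rectangle with suckers of type a), Lemma~\ref{lem:typeA} guarantees that, for every $P$-admissible lattice $\Lambda$, the table $W(\Lambda,P)$ possesses a free pair $(A_i,A_i'+\rho)$ with $\rho\in\Lambda$. When $P$ is of type b) or c), the very same conclusion is supplied by Lemma~\ref{lem:typeC}. Thus in all three cases one is handed a free pair, with no further case analysis needed at this level.

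Next I would feed this free pair into the recurrence mechanism packaged in Remark~\ref{rem:rec}. By definition a free pair means precisely that the two centrally symmetric concave corners $A_i$ and $A_i'+\rho$ are joined by a straight billiard segment $S$ that meets no other corner of $W(\Lambda,P)$, which is exactly the hypothesis of Lemma~\ref{lem:cyl_rec}. That lemma then perturbs $S$ by a small $\vep>0$, producing a $\tau$-invariant periodic regular orbit $S_\vep\cup\tau(S_\vep)$ that bounds a symmetric cylinder $C$ with core $\sigma(C)\in K^\perp$; invoking Corollary~\ref{cor:recgen} with the splitting $H_1(M,\Q)=K\oplus K^\perp$ finally yields recurrence of the directional billiard flow $(b^\theta_t)_{t\in\R}$ on $W(\Lambda,P)$ for a.e.\ $\theta\in S^1$.

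The genuine content therefore sits entirely inside the two combinatorial lemmas rather than in the corollary itself. The step I expect to be the real obstacle is the geometric verification—already performed in Lemmas~\ref{lem:typeA} and \ref{lem:typeC}—that a free pair can always be located no matter how the scatterers of an arbitrary $P$-admissible lattice are arranged, which requires the half-line/minimal-distance packing arguments and the reduction to a suitable rectangle $R$. Once those arguments are granted, the assembly of the corollary is immediate: one simply chains the existence of a free pair to Remark~\ref{rem:rec}, and no additional estimate or direction-by-direction argument is required.
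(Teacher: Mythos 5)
Your proposal is correct and follows exactly the paper's own argument: the paper also deduces the corollary by combining Lemma~\ref{lem:typeA} (type a)) and Lemma~\ref{lem:typeC} (types b) and c)) with Remark~\ref{rem:rec}, which routes the free pair through Lemma~\ref{lem:cyl_rec} and Corollary~\ref{cor:recgen}. Your only slight imprecision is presenting the straight billiard segment joining $A_i$ and $A_i'+\rho$ as the definition of a free pair rather than as its consequence (the definition concerns the empty rectangle $R(A_i,A_i'+\rho)$), but this is exactly the content of Remark~\ref{rem:rec} and does not affect the argument.
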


\appendix
\section{Stable space and coboundaries.}\label{app}
In this section we deliver the proof of
Theorems~\ref{cohthm} and \ref{cohcor}.

Since the vertical flow for an Abelian
differential $r_{\pi/2-\theta}\omega\in \mathcal{M}_1(M)$
coincides with the directional flow in the direction $\theta$ for
the  Abelian differential $\omega\in \mathcal{M}_1(M)$, it
suffices to deal only with the vertical direction $\theta=\pi/2$. Therefore we will always assume that $0\in S^1$ is BOM generic.

\subsection{Preliminary definitions and notation}\label{notationappendix}
For any $\omega\in \mathcal{M}_1(M)$ let $M_{reg,\omega}$ be the
set of points which are regular both for the vertical and
horizontal flow on $(M,\omega)$ (that, we recall, means that both
flows are defined for all times).

For any $\omega\in \mathcal{M}_1(M)$ and any  point $p\in
M\setminus \Sigma$ let us denote by $I_{\omega}=I_{\omega}(p)$ the
horizontal interval on $(M, \omega)$ of total length $1$
centered at $p$.
\begin{remark}\label{nested}
Since the Teichm\"uller flow $(g_t)_{t \in \R}$ preserves
horizontal  leaves and rescales the horizontal vector fields by
$X_h^\omega=e^{t}X_h^{g_t\omega}$, we have that
\[
t< s \quad \Rightarrow \quad  I_{g_{s}\omega} (p)
\subset I_{g_{t}\omega} (p) .
\]
\end{remark}
In the rest of the Appendix we will consider $\omega$ and $p$ for which $I_{\omega}(p)$ satisfy:
\begin{align}\label{regularcondition}
\begin{aligned}
&\text{$I_{\omega} = I_{\omega}(p)$ has no self-intersections, does not
intersect $\Sigma$} \\ & \text{and all but finitely
many points from $I_{\omega}$ return to $I_{\omega}$
for the vertical flow.}
\end{aligned}
\end{align}
Then we will denote by   $T=T_{\omega}: I_{\omega} \to I_{\omega}$ the
Poincar{\'e} map of the vertical flow $(\varphi^v_t)_{t\in \R}$ on
$(M,\omega)$, which is well defined by (\ref{regularcondition})
and  is an IET. Let $\tau_{\omega}:I_{\omega} \to
\R^+$ the  first return time map and let us also denote by
$I_\alpha({\omega})$, $\alpha\in\mathcal{A}$ the subintervals exchanged by
$T_\omega$, by $ \lambda_\alpha({\omega})=\lambda_\alpha({\omega},p)$ their lengths and by
$\tau_\alpha({\omega})=\tau_\alpha({\omega},p)$  the first return time of the interval
$I_\alpha({\omega})$ to $I_{\omega}$.

Since $I_{\omega}(p)$  does not contain any singularity and the
set of singularities is discrete,  let  $\delta(\omega)=\delta({\omega},p)>0$ be  maximal
such that the strip
 \begin{equation*}
\bigcup_{0\leq t < \delta({\omega})} \varphi^v_t I_{\omega}(p)
 \end{equation*}
does not contain any singularities, and thus is isometric to the
Euclidean rectangle of height $\delta(\omega)$ and width $1$ in
the flat coordinates given by ${\omega}$.

For each $\alpha\in\mathcal{A}$ let $\xi_\alpha({\omega})=\xi_\alpha({\omega},p)\in H_1(M,\Z)$ be
the homology class obtained by considering the vertical trajectory
starting from any point from $I_\alpha(\omega)$ up to the first return to
$I_{\omega}$ and closing it up with a horizontal geodesic segment
contained in $I_{\omega}$.

\begin{remark}\label{remark:continuity}
For every pair $(\omega_0,p_0)\in
\mathcal{M}_1(M)\times(M\setminus\Sigma)$ satisfying
\eqref{regularcondition} there exists a sufficiently small
neighborhood $\mathcal{U}(\omega_0,p_0)\subset \mathcal{M}_1(M)$ of $\omega_0$
such that for any $\omega\in \mathcal{U}(\omega_0,p_0)$
\begin{itemize}
\item[(i)] the pair $(\omega,p_0$) also  satisfies
\eqref{regularcondition},
\item[(ii)]  the
induced IET $T_\omega$ on $I_\omega(p_0)$ has the same number
of exchanged intervals and the same combinatorial datum,
\item[(iii)] the quantities $\lambda_\alpha(\omega,p_0)$, $\tau_\alpha(\omega,p_0)$  for $\alpha\in\mathcal{A}$ and
$\delta(\omega, p_0)$ change continuously with $\omega\in
\mathcal{U}(\omega_0,p_0)$,
\item[(iv)]  the homology
class $\xi_\alpha(\omega,p_0)=\xi_\alpha(\omega_0,p_0)$ for every  $\alpha\in\mathcal{A}$ and $\omega\in
\mathcal{U}(\omega_0,p_0)$.
\end{itemize}
\end{remark}

\subsection{Auxiliary lemmas and proofs of main results}
Let $\omega\in\mathcal{M}_1(M)$ and let $0\in S^1$ is BOM generic.
Let $\mathcal{M}=\overline{SL(2,\R)\omega}$ and  let $\nu_{\mathcal{M}}$ be
the corresponding affine measure.
Choose  $\omega_0\in \mathcal{M}_1(M)$ in the support of the
measure $\nu_{\mathcal{M}}$ and let $p_0\in M\setminus \Sigma$ be such that its the vertical and the horizontal orbits are well defined.
Then the pair $(\omega_0,p_0)$ satisfies \eqref{regularcondition}.
Denote by $\mathcal{U}\subset \mathcal{M}_1(M)$ a
neighborhood of $\omega_0$ satisfying the claim of
Remark~\ref{remark:continuity}. Let
$\mathcal{T}\subset\mathcal{M}$ be an affine submanifold of
codimension $1$ containing $\omega_0$ and transversal to the
Teichm\"uller flow. Let $\mathcal{K}\subset \mathcal{T}$ be a closed ball  centered at $\omega_0$ and let $r>0$ so that
$\widetilde{\mathcal{K}}=\bigcup_{|t|\leq r}g_t\mathcal{K}\subset \mathcal{U}$.
Since $\omega_0$ belongs to the support of $\nu_{\mathcal{M}}$ and the measure
$\nu_{\mathcal{M}}$ is affine, we have $\nu_{\mathcal{M}}(\widetilde{\mathcal{K}})>0$ and
$\nu_{\mathcal{M}}(\partial\widetilde{\mathcal{K}})=0$.

\begin{lemma}\label{lem:Birk}
Let $(t_k)_{k\geq 0}$ be the sequence of successive positive
returns of $\omega$ to $\mathcal{K}$ for the Teichm\"uller flow.
This sequence is well defined, $t_k\to+\infty$ and $t_k/k\to
2r/\nu_{\mathcal{M}}(\widetilde{\mathcal{K}})$.
\end{lemma}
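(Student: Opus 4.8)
The plan is to treat $\mathcal{K}$ as a genuine cross-section for the Teichm\"uller flow and run a Kac-type counting argument that converts the statement into an application of Birkhoff genericity. First I would record the ergodic input. Since $0\in S^1$ is BOM generic, in particular Birkhoff generic, and $r_0\omega=\omega$, Theorem~\ref{thm:esch} (equation~\eqref{eq:birk}) gives, for every $\phi\in C_c(\mathcal{M}_1(M))$,
\[
\lim_{T\to\infty}\frac{1}{T}\int_0^T\phi(g_t\omega)\,dt=\int_{\mathcal{M}}\phi\,d\nu_{\mathcal{M}}.
\]
Because $\widetilde{\mathcal{K}}$ is compact and $\nu_{\mathcal{M}}(\partial\widetilde{\mathcal{K}})=0$, sandwiching $\mathbf{1}_{\widetilde{\mathcal{K}}}$ between continuous functions $\phi^-\le\mathbf{1}_{\widetilde{\mathcal{K}}}\le\phi^+$ with $\int(\phi^+-\phi^-)\,d\nu_{\mathcal{M}}$ arbitrarily small upgrades this to
\[
\lim_{T\to\infty}\frac{1}{T}\int_0^T\mathbf{1}_{\widetilde{\mathcal{K}}}(g_s\omega)\,ds=\nu_{\mathcal{M}}(\widetilde{\mathcal{K}}).
\]

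Next I would make the flow-box geometry precise. Write $N(T)=\#\{k\ge0:t_k\le T\}$ for the number of crossings of $\mathcal{K}$ in $[0,T]$. The crux is the identity $\{s\in\R:g_s\omega\in\widetilde{\mathcal{K}}\}=\bigcup_k[t_k-r,t_k+r]$ together with disjointness of these intervals. The inclusion $\supseteq$ holds since $g_{t_k}\omega\in\mathcal{K}$ forces $g_{t_k+u}\omega=g_u(g_{t_k}\omega)\in\widetilde{\mathcal{K}}$ for $|u|\le r$; conversely, if $g_s\omega=g_t\kappa$ with $\kappa\in\mathcal{K}$ and $|t|\le r$, then $g_{s-t}\omega=\kappa\in\mathcal{K}$, so $s-t$ is a crossing time within distance $r$ of $s$. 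Provided $\mathcal{K}$ and $r$ are small enough that $(\kappa,t)\mapsto g_t\kappa$ is injective on $\mathcal{K}\times[-r,r]$ (guaranteed by transversality of $\mathcal{T}$ to the flow, shrinking $\mathcal{U}$ if needed), two such intervals cannot overlap: a common time $s$ would express $g_s\omega=g_{s-t_k}(g_{t_k}\omega)=g_{s-t_{k+1}}(g_{t_{k+1}}\omega)$ with $s-t_k,s-t_{k+1}\in[-r,r]$ and $g_{t_k}\omega,g_{t_{k+1}}\omega\in\mathcal{K}$, two distinct points of the box with the same image, which is impossible. Hence consecutive crossings satisfy $t_{k+1}-t_k>2r$, each interval $[t_k-r,t_k+r]$ with $r\le t_k\le T-r$ contributes exactly $2r$, and the finitely many boundary intervals contribute a bounded amount, so
\[
\int_0^T\mathbf{1}_{\widetilde{\mathcal{K}}}(g_s\omega)\,ds=2r\,N(T)+O(1),
\]
with the error bounded independently of $T$.

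Combining the two displays gives $2r\,N(T)/T\to\nu_{\mathcal{M}}(\widetilde{\mathcal{K}})$, hence $N(T)/T\to\nu_{\mathcal{M}}(\widetilde{\mathcal{K}})/(2r)>0$. Since $\nu_{\mathcal{M}}(\widetilde{\mathcal{K}})>0$, the function $N(T)$ tends to infinity, so there are infinitely many returns and, by the spacing $t_{k+1}-t_k>2r$, they form a discrete increasing sequence; thus $(t_k)_{k\ge0}$ is well defined and $t_k\to+\infty$. Finally, from $N(t_k)=k+O(1)$ and $t_k\to\infty$ we obtain $t_k/k=(t_k/N(t_k))(N(t_k)/k)\to 2r/\nu_{\mathcal{M}}(\widetilde{\mathcal{K}})$, as claimed. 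The only delicate point is the flow-box step: one must know that $\widetilde{\mathcal{K}}$ is an embedded box so that the visit set decomposes into the disjoint intervals above. This is where transversality of $\mathcal{T}$ and the smallness of $\mathcal{K}$ and $r$ enter, and it is the part I would verify most carefully; the rest is the ergodic theorem for the indicator of a set with null boundary plus bookkeeping.
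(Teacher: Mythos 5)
Your proof is correct and follows the same route as the paper's: the paper's entire argument is that Birkhoff genericity together with $\nu_{\mathcal{M}}(\partial\widetilde{\mathcal{K}})=0$ gives $\frac{1}{T}\int_0^T\chi_{\widetilde{\mathcal{K}}}(g_t\omega)\,dt\to\nu_{\mathcal{M}}(\widetilde{\mathcal{K}})>0$, followed by ``this yields the claim.'' Your continuous-function sandwich, the flow-box injectivity of $(\kappa,t)\mapsto g_t\kappa$ on $\mathcal{K}\times[-r,r]$, and the Kac-type identity $\int_0^T\chi_{\widetilde{\mathcal{K}}}(g_s\omega)\,ds=2r\,N(T)+O(1)$ are precisely the details the paper leaves implicit (and indeed necessary for the stated limit $2r/\nu_{\mathcal{M}}(\widetilde{\mathcal{K}})$), so you have simply written out the paper's argument in full.
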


\begin{proof}
Since $0\in S^1$ is Birkhoff generic and
$\nu_{\mathcal{M}}(\partial\widetilde{\mathcal{K}})=0$, we have
\[\lim_{T\to\infty}\frac{1}{T}\int_0^T\chi_{\widetilde{\mathcal{K}}}(g_t\omega)\,dt=\nu_{\mathcal{M}}(\widetilde{\mathcal{K}}).
\]
As $\nu_{\mathcal{M}}(\widetilde{\mathcal{K}})>0$, this yields the claim of the lemma.
\end{proof}
The following lemma is a simple consequence of the fact that $\mathcal{K}$ is compact and ${\mathcal{K}}\subset \mathcal{U}$, see also \cite{Fr-Ulc:nonerg} for details.
\begin{lemma}\label{lemma:constructionK}
There exist positive
constants $A,C,c>0$ such that for every $\omega\in \mathcal{K}$
the pair $(\omega,p_0)$ satisfies \eqref{regularcondition},
\begin{equation}\label{baseuniform}
 \frac{1}{c} \| \gamma \|_\omega   \leq
\max_{\alpha\in\mathcal{A}} \left| \langle\xi_\alpha (\omega,p_0), \gamma \rangle\right|
\leq c\| \gamma \|_\omega \quad \text{for every} \quad
 \gamma \in H_1(M, \R),
\end{equation}
\begin{equation}\label{balance}
 \lambda_\alpha(\omega,p_0) \,
\delta(\omega,p_0)  \geq A \quad \text{and}\quad \frac{1}{C}\leq
\tau_\alpha(\omega,p_0)\leq C\quad\text{for any}\quad  \alpha\in\mathcal{A}.
\end{equation}
\end{lemma}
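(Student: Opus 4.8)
The plan is to deduce all three assertions from Remark~\ref{remark:continuity} together with the compactness of $\mathcal{K}$; since $\mathcal{K}\subset\widetilde{\mathcal{K}}\subset\mathcal{U}$, every $\omega\in\mathcal{K}$ falls under the scope of that remark. First, that each pair $(\omega,p_0)$ with $\omega\in\mathcal{K}$ satisfies \eqref{regularcondition} is immediate from Remark~\ref{remark:continuity}(i). It then remains to produce the uniform constants $A,C,c$, and this is where I would invoke that a continuous function on a compact set attains its extrema.

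For \eqref{balance}: by Remark~\ref{remark:continuity}(ii) the index set $\mathcal{A}$ is the same finite set for all $\omega\in\mathcal{K}$, and by (iii) each of the maps $\omega\mapsto\lambda_\alpha(\omega,p_0)$, $\omega\mapsto\delta(\omega,p_0)$, $\omega\mapsto\tau_\alpha(\omega,p_0)$ is continuous and strictly positive on the compact set $\mathcal{K}$. Hence each attains a strictly positive minimum and a finite maximum. I would set
\[A:=\min_{\alpha\in\mathcal{A}}\ \min_{\omega\in\mathcal{K}}\lambda_\alpha(\omega,p_0)\,\delta(\omega,p_0)>0,\qquad C:=\max_{\alpha\in\mathcal{A}}\ \max_{\omega\in\mathcal{K}}\max\Big\{\tau_\alpha(\omega,p_0),\tfrac{1}{\tau_\alpha(\omega,p_0)}\Big\}<\infty;\]
finiteness of $\mathcal{A}$ is exactly what lets me take these finitely many minima and maxima, and this yields the second display in \eqref{balance} at once.

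For \eqref{baseuniform} the crucial structural input is Remark~\ref{remark:continuity}(iv): the classes $\xi_\alpha(\omega,p_0)=\xi_\alpha(\omega_0,p_0)$ do not depend on $\omega\in\mathcal{U}$, so
\[N(\gamma):=\max_{\alpha\in\mathcal{A}}\big|\langle\xi_\alpha(\omega_0,p_0),\gamma\rangle\big|\]
is a fixed function of $\gamma\in H_1(M,\R)$, independent of $\omega$. I would first check that $N$ is a genuine norm: since the intersection form on $H_1(M,\R)$ is non-degenerate, $N(\gamma)=0$ forces $\gamma$ to be symplectically orthogonal to every $\xi_\alpha$, so $N$ is positive definite precisely when the return classes $\{\xi_\alpha\}_{\alpha\in\mathcal{A}}$ span $H_1(M,\R)$. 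Granting this, $N$ and any fixed reference norm on the finite-dimensional space $H_1(M,\R)$ are equivalent, with constants not involving $\omega$. On the other hand, the Hodge norm $\|\,\cdot\,\|_\omega$ depends continuously on $\omega$, so $(\omega,\gamma)\mapsto\|\gamma\|_\omega$ is continuous on $\mathcal{K}\times H_1(M,\R)$ and, restricted to the compact product of $\mathcal{K}$ with the reference unit sphere, attains a positive minimum and a finite maximum; this gives a uniform equivalence of $\|\,\cdot\,\|_\omega$ with the reference norm over all $\omega\in\mathcal{K}$. Composing the two equivalences produces a single constant $c>0$ realizing \eqref{baseuniform}.

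The only step that is not pure compactness-and-continuity bookkeeping is the spanning of $H_1(M,\R)$ by the return classes $\xi_\alpha$, which underlies positive-definiteness of $N$ and hence the lower bound in \eqref{baseuniform}; this is the point I expect to be the main obstacle and would treat most carefully. I would recover it from the presentation of $(M,\omega)$ by the rectangles over the $I_\alpha$ determined by $T_{\omega_0}$, in which these rectangles tile the surface and the loops closing up their vertical sides realize generators of the homology (see \cite{Fr-Ulc:nonerg} for details).
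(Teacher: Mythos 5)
Your proof is correct and matches the paper's own treatment: the paper dispatches this lemma in one line as a ``simple consequence of the fact that $\mathcal{K}$ is compact and $\mathcal{K}\subset\mathcal{U}$'' (delegating details to \cite{Fr-Ulc:nonerg}), and your compactness-and-continuity bookkeeping via Remark~\ref{remark:continuity} for \eqref{regularcondition} and \eqref{balance}, together with the $\omega$-independent norm $N(\gamma)=\max_{\alpha}|\langle\xi_\alpha(\omega_0,p_0),\gamma\rangle|$ from Remark~\ref{remark:continuity}(iv) compared against the continuously varying Hodge norm for \eqref{baseuniform}, is exactly the intended expansion. The one genuinely nontrivial input --- that the return classes $\xi_\alpha$ span $H_1(M,\R)$, so that $N$ is positive definite by non-degeneracy of the intersection form --- is correctly isolated by you, and it is the same standard zippered-rectangles fact that the paper implicitly imports from \cite{Fr-Ulc:nonerg}.
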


\begin{remark}\label{rem:Teich}
Recall that for any real $t$ the vertical and horizontal vector
fields  $X_v^\omega$ and $X_h^\omega$  on $(M, \omega)$ rescale as
follows under the Teichm\"uller geodesic flow
$(g_t)_{t\in\mathbb{R}}$:
\[X_v^\omega=e^{-t}X_v^{g_t\omega}, \qquad
X_h^\omega=e^{t}X_h^{g_t\omega}.
\]
Thus, the vertical and horizontal flows satisfy:
\[
\varphi^{v,\omega}_s p=\varphi^{v,g_t\omega}_{e^{-t}s}p, \qquad  \varphi^{h,\omega}_s p=\varphi^{h,g_t\omega}_{e^{t}s}p.\]
\end{remark}

\begin{notation}
For $0\leq t_0<t_1$, consider the intervals $I_{g_{t_0}\omega}$,
$I_{g_{t_1}\omega}$  defined at the beginning of the section,
that, by Remark \ref{nested}, satisfy $I_{g_{t_1}\omega}\subset
I_{g_{t_0}\omega}$ and for every regular point $p\in
I_{g_{t_0}\omega}$ denote respectively by
$\tau_{t_0,t_1}^{+}(p)\geq 0$ and $\tau_{t_0,t_1}^{-}(p)\leq 0$
the times of the first forward and respectively backward entrance
of the vertical orbit of $p$ to $I_{g_{t_1}\omega}$.
\end{notation}
\begin{lemma}\label{lem:twoentr}
Suppose that $0\leq t_0<t_1$ are such that $g_{t_0}\omega,g_{t_1}\omega\in\mathcal{K}$ and $I_{g_{t_0}\omega}$ is the shortest geodesic joining the ends of $I_{g_{t_0}\omega}$.
Then for every  $\gamma\in H_1(M,\R)$, $p\in I_{g_{t_0}\omega}$
and $\tau_{t_0,t_1}^{-}(p)\leq s\leq \tau_{t_0,t_1}^{+}(p)$ with
$\varphi^v_s(p)\in I_{g_{t_0}\omega}$ we have
\[\big|\langle \sigma^{v}_{s}(p),\gamma\rangle\big|\leq c\,C^2e^{t_1-t_0}\|\gamma\|_{g_{t_0}\omega}.\]
\end{lemma}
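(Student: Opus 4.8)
The plan is to reduce the pairing $\langle\sigma^v_s(p),\gamma\rangle$ to a bounded sum of the elementary classes $\xi_\alpha(g_{t_0}\omega)$ and then to control the number of summands by a vertical-length (time) argument, reading the two halves of the estimate in two different metrics. I will assume $s\geq 0$; the case $s\leq 0$ is symmetric (see the last paragraph).

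First I would decompose $\sigma^v_s(p)$ along returns to the larger interval. Since $g_{t_0}\omega\in\mathcal{K}$, the pair $(g_{t_0}\omega,p)$ satisfies \eqref{regularcondition} and the first-return map $T_{g_{t_0}\omega}$ of the vertical flow to $I_{g_{t_0}\omega}$ is a well-defined IET. Let $N\geq 0$ be the number of returns of the vertical orbit of $p$ to $I_{g_{t_0}\omega}$ in time $[0,s]$, so that $\varphi^v_s(p)=T_{g_{t_0}\omega}^{N}p$ and the visited intervals are $I_{\alpha_0}(g_{t_0}\omega),\ldots,I_{\alpha_{N-1}}(g_{t_0}\omega)$. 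Because $p$ and $\varphi^v_s(p)$ both lie in $I_{g_{t_0}\omega}$ and, by hypothesis, $I_{g_{t_0}\omega}$ is the shortest geodesic joining its ends, the shortest curve closing up the orbit arc from $p$ to $\varphi^v_s(p)$ is the horizontal subsegment of $I_{g_{t_0}\omega}$; hence the intermediate closing-up segments telescope (their formal difference with the single closing segment is a cycle supported in the contractible interval $I_{g_{t_0}\omega}$, so null-homologous) and
\[\sigma^v_s(p)=\sum_{i=0}^{N-1}\xi_{\alpha_i}(g_{t_0}\omega)\quad\text{in}\quad H_1(M,\Z).\]
Pairing with $\gamma$ and applying \eqref{baseuniform} to $g_{t_0}\omega\in\mathcal{K}$ gives
\[\big|\langle\sigma^v_s(p),\gamma\rangle\big|\leq\sum_{i=0}^{N-1}\big|\langle\xi_{\alpha_i}(g_{t_0}\omega),\gamma\rangle\big|\leq N\,c\,\|\gamma\|_{g_{t_0}\omega}.\]

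It then remains to bound $N\leq C^2e^{t_1-t_0}$, which is the heart of the argument. On one hand, by \eqref{balance} applied to $g_{t_0}\omega\in\mathcal{K}$, every return to $I_{g_{t_0}\omega}$ costs vertical time at least $1/C$ in the $g_{t_0}\omega$-metric, so $N\leq C\,L$, where $L$ is the $g_{t_0}\omega$-vertical length of the orbit arc from $p$ to $\varphi^v_s(p)$. On the other hand, since $\tau^{-}_{t_0,t_1}(p)\leq s\leq\tau^{+}_{t_0,t_1}(p)$, this arc lies inside a single excursion between two consecutive visits to the smaller interval $I_{g_{t_1}\omega}$; such an excursion is precisely one return of the first-return map $T_{g_{t_1}\omega}$ to $I_{g_{t_1}\omega}$, so its $g_{t_1}\omega$-vertical length is a return time $\tau_\beta(g_{t_1}\omega)\leq C$ by \eqref{balance} applied to $g_{t_1}\omega\in\mathcal{K}$. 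Finally, by the scaling of the vertical vector field under the Teichm\"uller flow (Remark~\ref{rem:Teich}), the $g_{t_0}\omega$-vertical length equals $e^{t_1-t_0}$ times the $g_{t_1}\omega$-vertical length, whence $L\leq Ce^{t_1-t_0}$ and $N\leq C L\leq C^2e^{t_1-t_0}$.

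Combining the two displayed estimates yields $|\langle\sigma^v_s(p),\gamma\rangle|\leq cC^2e^{t_1-t_0}\|\gamma\|_{g_{t_0}\omega}$, as required; for $s\leq 0$ one runs the vertical flow backwards, which only changes the signs of the classes $\xi_{\alpha_i}$ and leaves both the count and the estimate unchanged. I expect the only real obstacle to be bookkeeping the two metrics correctly: the lower bound on $N$ must be read in the $g_{t_0}\omega$-metric (where each $T_{g_{t_0}\omega}$-return costs time $\geq 1/C$), while the upper bound on the excursion is most natural in the $g_{t_1}\omega$-metric (where it is one $T_{g_{t_1}\omega}$-return of length $\leq C$), and the factor $e^{t_1-t_0}$ arises exactly from translating between them.
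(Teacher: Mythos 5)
Your proof is correct and follows essentially the same route as the paper's: decompose the orbit arc into first returns to $I_{g_{t_0}\omega}$ so that $\sigma^v_s(p)$ becomes a sum of classes $\xi_\alpha(g_{t_0}\omega)$ bounded via \eqref{baseuniform}, then bound the number of returns by $C^2e^{t_1-t_0}$ by playing the minimal return time at $g_{t_0}\omega$ against the maximal return time at $g_{t_1}\omega$ (the arc avoiding $I_{g_{t_1}\omega}$) together with the Teichm\"uller rescaling of vertical time from Remark~\ref{rem:Teich}. Your explicit telescoping justification, using the shortest-geodesic hypothesis on $I_{g_{t_0}\omega}$ to identify the closing segments, is a point the paper leaves implicit, but it is the same argument.
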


\begin{proof}
Let us assume that $0\leq s\leq\tau_{t_0,t_1}^{+}(p)$. The proof
for $\tau_{t_0,t_1}^{-}(p)\leq s<0$ is analogous. Denote by
$0=s_0<s_1<\ldots<s_K=s$ the consecutive return times (to
$I_{g_{t_0}\omega}$) of the forward vertical orbit of $p$. For
each pair  $s_{i-1}, s_{i}$ of consecutive return times of the
vertical flow $(\varphi^v_t)_{t\in\mathbb{R}}$ on $(M, \omega)$ to
the interval $I_{g_{t_0}\omega}$, it follows from Remark
\ref{rem:Teich} that $e^{-t_0}s_{i-1}, e^{-t_0}s_{i}$ are
consecutive return times of the vertical flow
$(\varphi^v_{g_{t_0}\omega})_{t\in\mathbb{R}}$ on $(M,
g_{t_0}\omega)$  to $I_{g_{t_0}\omega}$. Thus, since the first
return time function of
$(\varphi^v_{g_{t_0}\omega})_{t\in\mathbb{R}}$  to
$I_{g_{t_0}\omega}$ assumes the finitely many values
$\tau_\alpha(g_{t_0}\omega)$, $\alpha\in\mathcal{A}$,  for all $0\leq i <K $ we have
\[
e^{-t_0}s_{i}-e^{-t_0}s_{i-1}\geq \min_{\alpha\in\mathcal{A}}\tau_\alpha(g_{t_0}\omega).
\]
It follows that
\[s=s_K\geq
Ke^{t_0}\min_{\alpha\in\mathcal{A}}\tau_\alpha(g_{t_0}\omega).\]
Moreover, the
orbit segment
\[\{\varphi^{v,\omega}_tp:s_0<t<s_K\}=\{\varphi^{v,g_{t_1}\omega}_tp: 0<t<e^{-t_1}s_K\}\]
does not intersect the interval $I_{g_{t_1}\omega}$. It follows
that \[e^{-t_1}s_{K} \leq \max_{\alpha\in\mathcal{A}}\tau_\alpha(g_{t_1}\omega).\] Therefore,
\[K\leq \frac{e^{t_1}\max_{\alpha\in\mathcal{A}}\tau_\alpha(g_{t_1}\omega)}{e^{t_0}\min_{\alpha\in\mathcal{A}}\tau_\alpha(g_{t_0}\omega)}.\] In view of \eqref{balance}, it follows
that $K\leq e^{t_1-t_0}C^2$ and $s\leq e^{t_1}C$.
Next note that
\[\sigma^v_{s}(p)=\sum_{i=1}^K\sigma^v_{s_i-s_{i-1}}(\varphi_{s_{i-1}}^vp)\]
and every class $\sigma^v_{s_i-s_{i-1}}(\varphi_{s_{i-1}}^vp)$ is
equal to some $\xi_\alpha(g_{t_0}\omega)\in H_1(M,\Z)$. Thus, by Lemma
\ref{lemma:constructionK} applied to
$g_{t_0}\omega\in\mathcal{K}$, we have
\[\big|\langle\sigma^v_{s}(p),\gamma\rangle\big|\leq
\sum_{i=1}^K\big|\langle\sigma^v_{s_i-s_{i-1}}(\varphi_{s_{i-1}}^vp),\gamma\rangle\big|\leq Kc\|\gamma\|_{g_{t_0}\omega}.\]
It follows that
$\big|\langle\sigma^v_{s}(p),\gamma\rangle\big|\leq
cC^2e^{t_1-t_0}\|\gamma\|_{g_{t_0}\omega}$.
\end{proof}

\begin{proof}[Proof of {Theorem}~\ref{cohthm}]
By Lemma~\ref{lem:Birk}, there exists an increasing sequence
$(t_k)_{k\geq 0}$ of positive numbers such that
$g_{t_k}\omega\in\mathcal{K}$ for $k\geq 0$ and $t_k/k\to 2r/\nu_{\mathcal{M}}(\mathcal{K})>0$.

Let us consider the
sequence of intervals $(I_{g_{t_k}\omega})_{k\geq 0}$ centered
at $p_0$. By Remark~\ref{nested}, $(I_{g_{t_k}\omega})_{k\geq
0}$ is a decreasing sequence of nested intervals. Without loss of generality we can assume that
$I_{g_{t_0}\omega}$ is the shortest geodesic joining the ends of $I_{g_{t_0}\omega}$.
For every pair of regular points $p_1,p_2$ in $(M,\omega)$ denote by $\sigma(p_1,p_2)\in H_1(M,\Z)$
the homology class of the closed oriented curve compose form the forward segment of the vertical orbit from $p_1$ to the first hit of $I_{g_{t_0}\omega}$,
the backward segment of the vertical orbit from $p_2$ to its first hit of $I_{g_{t_0}\omega}$ closed by a directed segment of the interval $I_{g_{t_0}\omega}$
and the shortest path from $p_2$ to $p_1$ in $(M,\omega)$. Then
\[C':=\sup\{\|\sigma(p_1,p_2)\|_\omega:p_1,p_2\}\ \text{ is finite.}\]

Fix a regular
point  $p$ in $(M,\omega)$.  For any $t >0$, the trajectory
$\Phi_t:=\{\varphi^v_sp:0\leq s\leq t\}$ can be inductively
decomposed into principal return trajectories as follows
(analogously to Lemma 9.4 in \cite{For-dev}). Let $K \in \N$  be
the maximum $k \geq 0$ such that $\Phi_t$ intersect
$I_{g_{t_k}\omega}$.  For every $k=0,\ldots,K$ let $0\leq l_k\leq
r_k\leq t$ be the times of the first and the last intersection of
$\Phi_t$ with $I_{g_{t_k}\omega}$. Then, since the intervals
$(I_{g_{t_k}\omega})_{k\geq 0}$ are nested,
\[0\leq l_0\leq l_1\leq\ldots\leq l_K\leq r_K\leq\ldots\leq r_1\leq r_0\leq t.\]
Then
\[\sigma^v_t(p)=\sigma(p,\varphi^v_{t}p)+\sum_{i=1}^K\sigma^v_{l_i-l_{i-1}}(\varphi^v_{l_{i-1}}p)+\sigma^v_{r_K-l_{K}}(\varphi^v_{l_{K}}p)+
\sum_{i=1}^K\sigma^v_{r_i-r_{i-1}}(\varphi^v_{r_{i-1}}p).\]
Moreover, for $i=1,\ldots,K$ we have
\[
l_{i}-l_{i-1}=\tau_{t_{i-1},t_i}^+(\varphi^v_{l_{i-1}}p), \; r_{i}-r_{i-1}=\tau_{t_{i-1},t_i}^-(\varphi^v_{r_{i-1}}p)\text{  and }
r_K-l_K\leq \tau_{t_{K},t_{K+1}}^+(\varphi^v_{l_K}p),
\]
where the functions $\tau_{t_{i-1},t_i}^{\pm}$ are defined before Lemma~\ref{lem:twoentr}.
In view of
Lemma~\ref{lem:twoentr}, it follows that for every $1\leq i\leq K$ and $\gamma\in H_1(M,\R)$ we have
\begin{equation}\label{eq:sum1}
\big|\langle \sigma^v_{l_i-l_{i-1}}(\varphi^v_{l_{i-1}}p),\gamma\rangle\big|\leq
c\,C^2e^{t_i-t_{i-1}}\|\gamma\|_{g_{t_{i-1}}\omega},
\end{equation}
\begin{equation}\label{eq:sum2}
 \big|\langle \sigma^v_{r_i-r_{i-1}}(\varphi^v_{r_{i-1}}p),\gamma\rangle\big|\leq
c\,C^2e^{t_i-t_{i-1}}\|\gamma\|_{g_{t_{i-1}}\omega}
\end{equation}
and
\begin{equation}\label{eq:sum3}
 \big|\langle \sigma^v_{r_K-l_K}(\varphi^v_{l_{K}}p),\gamma\rangle\big|\leq
c\,C^2e^{t_{K+1}-t_{K}}\|\gamma\|_{g_{t_{K}}\omega}.
\end{equation}
Moreover,
\begin{equation}\label{eq:sum4}
 \big|\langle \sigma(p,\varphi^v_{t}p),\gamma\rangle\big|\leq \|\sigma(p,\varphi^v_{t}p)\|_{\omega}\|\gamma\|_{\omega}\leq
C'\|\gamma\|_{\omega}.
\end{equation}
Summing \eqref{eq:sum1}-\eqref{eq:sum4} we get
\begin{equation}\label{eq:sumsum}\big|\langle \sigma^v_{t}(p),\gamma\rangle\big|\leq
2\sum_{k=0}^\infty c\,C^2e^{t_{k+1}-t_k}\|\gamma\|_{g_{t_k}\omega}+ C'\|\gamma\|_{\omega}.
\end{equation}

Assume that $\gamma\in E_\omega^-$. Then
there exist constants $C_1,
\theta >0$ such that $\|\gamma\|_{g_{t_k}\omega} \leq C_1
e^{-\theta t_k}$ for all $k\geq 0$. Using this inequality together
with (\ref{eq:sumsum}), we get that there exists $C_2>0$ such that
for any  $t\geq 0$, one has
\begin{equation}\label{decompintegrals1}
\big|\langle \sigma^v_{t}(p),\gamma\rangle\big| \leq  C_2
\sum_{k=0}^\infty e^{( t_{k+1}-t_k)}  e^{-\theta t_k} +C_2 = C_2
\sum_{k=0}^\infty  e^{ \left( \frac{ t_{k+1}-t_k}{t_k}  - \theta
\right)t_k } + C_2 .
\end{equation}
Since  $t_k /k\to 2r/\nu_{\mathcal{M}}(\mathcal{K})>0$, we have $(t_{k+1}-t_k)/t_k\to 0$. Thus, $(t_{k+1}-t_k)/t_k - \theta  \leq - \theta/2$ for all $k$
sufficiently large. It follows that the above series is convergent and $|\langle \sigma^v_{t}(p),\gamma\rangle|$ is uniformly bounded for all  $t\geq 0$
and regular points $p$ in $(M,\omega)$. This concludes the proof of the first part of Theorem~\ref{cohthm}.

\smallskip
Suppose that $H_1(M,\Q)=K\oplus K^{\perp}$ is an orthogonal symplectic splitting  such that $V=\R\otimes_{Q}K$ and let
$(\xi_i)_{i=1}^{2d}$ be a basis of $K$.  After some rescaling we can assume that
$\xi_i\in H_1(M,\Z)$ for $1\leq i\leq 2d$.   We now show that for every nonzero $\gamma\in E^-_{\omega}$ we have
$(\langle\gamma,\xi_i\rangle)_{i=1}^{2d}\notin \R \cdot \Q^{2d}$. Suppose, contrary to our claim, that
there exist $q\in\N$ and $a>0$ such that $\langle\gamma,\xi_i\rangle\in a\Z/q$ for $1\leq i\leq 2d$.
Since $g_{t_k}\omega\in\mathcal{K}$ for $k\geq 0$, in view of \eqref{baseuniform}, we have
\begin{equation}\label{eq:stab}
\frac{1}{c} \| \gamma \|_{g_{t_k}\omega}   \leq
\max_{\alpha\in\mathcal{A}} \left| \langle\xi_\alpha (g_{t_k}\omega), \gamma \rangle\right|
\leq c\| \gamma \|_{g_{t_k}\omega} \quad \text{for} \quad
k\geq 0.
\end{equation}
Denote by $p:H_1(M,\Q)\to K$ the orthogonal symplectic projection. Then
\[p(\varrho)=\sum_{j=1}^{2d}\sum_{i=1}^{2d}\langle\varrho,\xi_i\rangle\,\Xi^{-1}_{ij}\xi_j,\]
where $\Xi=[\langle\xi_i,\xi_j\rangle]_{i,j=1,\ldots,2d}$. Let
$Q:=\det\Xi\in\Z\setminus\{0\}$. Since $\gamma\in V$ and $V$ is
orthogonal to $K^\perp$, for every $\varrho\in H_1(M,\Z)$ we have
\[\langle\varrho,\gamma\rangle=\langle p(\varrho),\gamma\rangle=
\sum_{j=1}^{2d}\sum_{i=1}^{2d}\langle\varrho,\xi_i\rangle\,\Xi^{-1}_{ij}\langle\xi_j,\gamma\rangle\in \frac{a}{qQ}\Z.\]
Therefore  $\max_{\alpha\in\mathcal{A}} \left| \langle\xi_\alpha
(g_{t_k}\omega), \gamma \rangle\right|\in a\Z/qQ$ for $k\geq 0$.
As $\gamma\in E^-_\omega$, we have $\| \gamma
\|_{g_{t_k}\omega}\to 0$ as $k\to+\infty$. By \eqref{eq:stab}, it
follows that $\max_{\alpha\in\mathcal{A}}| \langle\xi_\alpha
(g_{t_k}\omega), \gamma \rangle|=0$ for all $k$ large enough.
Again, by \eqref{eq:stab}, this gives $\| \gamma
\|_{g_{t_k}\omega}=0$ for all $k$ large enough and hence
$\gamma=0$, contrary to assumption.
 This concludes the proof of the second part of Theorem
\ref{cohthm}.\end{proof}

\begin{proof}[Proof of Theorem~\ref{cohcor}]
Let $I:=I_{g_{t_0}\omega}(p_0)$.  Take any regular point $x\in I$. For
every $n\in \N$ denote by $s_n>0$ the time of $n$-th return of $x$
to $I$ for the flow $(\varphi^\theta_t)_{t\in\R}$. Then
$\psi_\gamma^{(n)}(x)=\langle\gamma,\sigma^v_{s_n}(x)\rangle$.
In view of Theorem~\ref{cohthm}, $|\psi_\gamma^{(n)}(x)|\leq C$
for  every regular $x\in I$ and $n\geq 1$ which shows that
$\psi_\gamma:I\to \R$ is a coboundary.
\end{proof}

\end{document}